\theoremstyle{plain}
\newtheorem{thm}{Theorem}[section]
\newtheorem{lmm}[thm]{Lemma}
\newtheorem{prp}[thm]{Proposition}
\newtheorem{crl}[thm]{Corollary}
\theoremstyle{definition}
\newtheorem{rmk}[thm]{Remark}
\newtheorem{exm}[thm]{Example}
\def\ens#1{\mathchoice{\left\{ #1 \right\}}{\{ #1 \}}{\{ #1 \}}{\{ #1 \}}}
\def\set#1#2{\mathchoice{\left\{ #1 \middle| #2 \right\}}{\{ #1 \mid #2 \}}{\{ #1 \mid #2 \}}{\{ #1 \mid #2 \}}}
\def\r#1{\text{\rm #1}}
\def\t#1{\text{#1}}
\def\v#1{\mathchoice{\left| #1 \right|}{| #1 |}{| #1 |}{| #1 |}}
\def\n#1{\mathchoice{\left\| #1 \right\|}{\| #1 \|}{\| #1 \|}{\| #1 \|}}
\def\ol#1{\overline{#1}{}}
\def\tl#1{\tilde{#1}{}}
\def\ul#1{\underline{#1}{}}
\def\wh#1{\widehat{#1}{}}
\newcommand{\im}{\r{im}}
\newcommand{\coim}{\r{coim}}
\newcommand{\bC}{\mathbb{C}}
\newcommand{\bF}{\mathbb{F}}
\newcommand{\bL}{\mathbb{L}}
\newcommand{\bN}{\mathbb{N}}
\newcommand{\bR}{\mathbb{R}}
\newcommand{\bZ}{\mathbb{Z}}
\newcommand{\cB}{\mathscr{B}}
\newcommand{\cC}{\mathscr{C}}
\newcommand{\cD}{\mathscr{D}}
\newcommand{\cK}{\mathscr{K}}
\newcommand{\cM}{\mathscr{M}}
\newcommand{\cT}{\mathscr{T}}
\newcommand{\cU}{\mathscr{U}}
\newcommand{\rC}{\r{C}}
\newcommand{\rH}{\r{H}}
\newcommand{\rM}{\r{M}}
\newcommand{\C}{\bC}
\newcommand{\F}{\bF}
\newcommand{\N}{\bN}
\newcommand{\R}{\bR}
\newcommand{\Z}{\bZ}
\newcommand{\Qp}{\mathbb{Q}_p}
\newcommand{\Ab}{\r{Ab}}
\newcommand{\Alg}{\r{Alg}}
\newcommand{\Ch}{\r{Ch}}
\newcommand{\Cone}{\r{Cone}}
\newcommand{\CoMod}{\r{CoMod}}
\newcommand{\Cyl}{\r{Cyl}}
\newcommand{\Desc}{\r{Desc}}
\newcommand{\Hom}{\r{Hom}}
\newcommand{\uHom}{\ul{\Hom}}
\newcommand{\id}{\r{id}}
\newcommand{\Mod}{\r{Mod}}
\newcommand{\op}{\r{op}}
\newcommand{\Res}{\r{Res}}
\newcommand{\Set}{\r{Set}}
\newcommand{\Spec}{\r{Spec}}
\newcommand{\Tot}{\r{Tot}}
\newcommand{\Cech}{$\check{\t{C}}$ech }
\newcommand{\Frechet}{Fr\'echet }
\newcommand{\GelfandNaimark}{Gel'fand--Naimark }
\newcommand{\BAb}{\Ab^{\r{A}}}
\newcommand{\BAbleq}{\Ab^{\r{A}}_{\leq 1}}
\newcommand{\CAlg}{C^* \r{-Alg}}
\newcommand{\CH}{\r{CH}}
\newcommand{\Chba}{\Ch^{-}}
\newcommand{\CO}{\r{CO}}
\newcommand{\cTd}{\cT_{\r{d}}}
\newcommand{\cTnd}{\cT_{\r{nd}}}
\newcommand{\Der}{\r{Der}}
\newcommand{\Derba}{\Der^{-}}
\newcommand{\derh}{d^{\r{h}}}
\newcommand{\derv}{d^{\r{v}}}
\newcommand{\LH}{\r{LH}}
\newcommand{\naive}{\r{naive}}
\newcommand{\NBAb}{\Ab^{\r{nA}}}
\newcommand{\NBAbleq}{\Ab^{\r{nA}}_{\leq 1}}
\newcommand{\Sol}{\r{Sol}}
\newcommand{\TDCH}{\r{TDCH}}
\newcommand{\NSet}{\r{NSet}}
\title{Homotopy Epimorphisms and Derived Tate's Acyclicity for Commutative $C^*$-algebras}
\author{Federico Bambozzi and Tomoki Mihara}
\date{}
\begin{document}

\maketitle

\begin{abstract}
We study homotopy epimorphisms and covers formulated in terms of derived Tate's acyclicity for commutative $C^*$-algebras and their non-Archimedean counterparts. We prove that a homotopy epimorphism between commutative $C^*$-algebras precisely corresponds to a closed immersion between the compact Hausdorff topological spaces associated to them, and a cover of a commutative $C^*$-algebra precisely corresponds to a topological cover of the compact Hausdorff topological space associated to it by closed immersions admitting a finite subcover. This permits us to prove derived and non-derived descent for Banach modules over commutative $C^*$-algebras.
\end{abstract}

\tableofcontents

\section{Introduction}
\label{Introduction}

Since Tate's invention of rigid geometry (cf.\ \cite{Tat71}), the geometrical study of commutative Banach rings has been an important topic in number theory. The geometrical study of commutative rings has brought various benefits such as cohomological approaches to construct Galois representations, but the geometrical study of commutative Banach rings presents more difficult challenges than the geometrical study of commutative rings. One of the main difficulties is that the additive category of Banach modules over a Banach ring is not an Abelian category. In particular, this creates the problem of the correct use of the methods of homological algebra in this context. One solution is to work in Banach modules under appropriate finiteness assumptions to obtain an Abelian category. For example, P.\ Schneider and J.\ Teitelbaum constructed in \cite{ST02} \S 3 an Abelian subcategory of the additive category of Banach representations of a compact $p$-adic Lie group by introducing the notion of admissibility, which can be characterised in terms of the finiteness over the Iwasawa algebra through their duality theory (cf.\ \cite{ST02} Theorem 2.3 and \cite{ST02} Theorem 3.5). Another solution is to work within the theory of quasi-Abelian categories and apply the extension of the derived categorical approach to a quasi-Abelian category as introduced by J.-P.\ Schniders in \cite{Sch99}. This approach has been recently developed in the series of papers (cf.\ \cite{BB16}, \cite{BK17}, \cite{BBK19}, and \cite{BK20}). Once the notions of derived categories and derived functors are correctly extended to the analytic setting, it is natural to consider the derived variant of Tate's acyclicity. In order to explain the benefit to consider derived Tate's acyclicity, we recall the historical background on Tate's acyclicity.

\vspace{0.1in}
Tate's acyclicity is a desired property of commutative Banach rings in rigid geometry, that permits to equip their associated spectra with structure sheaves. Although commutative Banach algebras over a complete valuation field satisfying a finiteness condition called {\it affinoid algebras} satisfy Tate's acyclicity, it is known that a general commutative Banach ring does not necessarily satisfy Tate's acyclicity. The lack of Tate's acyclicity was one of the biggest obstructions to develop analytic geometry for general commutative Banach rings, e.g.\ commutative Banach algebras over $\Z$ and topologically infinitely generated commutative Banach algebras over a field. Recently, P.\ Scholze invented a novel foundation of rigid geometry called {\it perfectoid theory}, in which he verified Tate's acyclicity for topologically infinitely generated commutative Banach algebras over a field called {\it perfectoid algebras}, and asked several related open questions on Tate's acyclicity. Although several affirmative answers and negative answers to Scholze's conjectures were given in \cite{Mih16} and \cite{BV18}, it is still difficult to prove or disprove Tate's acyclicity for explicit examples.

\vspace{0.1in}
On the other hand, derived Tate's acyclicity for a rational cover always hold as proved in \cite{Sch19} Proposition 13.16 and \cite{BK20} Theorem 4.15, and Tate's acyclicity for a general cover consisting of flat objects in the analytic sense implies derived Tate's acyclicity (cf.\ Proposition \ref{relation between derived covers and non-derived covers}). As evidence of the reasonability and the naturality of the formulation, we give a complete characterisation of derived Tate's acyclicity for commutative $C^*$-algebras in terms of topological covers of the corresponding compact Hausdorff topological spaces by closed immersions and show the equivalence between Tate's acyclicity and derived Tate's acyclicity for them in Theorem \ref{relation of cover by subsets} and Theorem \ref{relation of cover}.

\vspace{0.1in}
The results include the non-Archimedean counterparts, while the characterisation becomes a little more complicated. For example, the projection $[0,1] \to \ens{0}$ is not a closed immersion but induces an isometric isomorphism between the algebras of continuous functions with values in a complete valuation field $k$. Conversely, the closed immersion $\ens{0,1} \hookrightarrow [0,1]$ induces a $k$-algebra homomorphism between the algebras which cannot be part of a cover in our setting, as we will show in Remark \ref{closed immersion is not necessarily homotopy epimorphism}. Instead of a closed immersion, we introduce the notion of an $\NBAb$-embedding in section \ref{Topological covers}. For any totally disconnected compact Hausdorff topological space $X$, the notion of a topological cover of $X$ by $\NBAb$-embeddings precisely corresponds to a cover of the algebra of continuous functions $X \to k$ formulated in terms of derived Tate's acyclicity (or equivalently Tate's acyclicity).

\vspace{0.1in}
As an application, we show in Theorem \ref{derived effective descent} that a finite set of closed immersions into a compact Hausdorff topological space (resp.\ a totally disconnected compact Hausdorff topological space) $X$ is a topological cover of $X$ if and only if it satisfies effective descent for Banach modules over the algebra of continuous functions with values in $\R$ or $\C$ (resp.\ in $k$). It is quite remarkable that Tate's acyclicity is not directly used in the proof of effective descent, because the category of Banach spaces is not balanced and hence a faithful functor from it might not be conservative. Contrary, derived Tate's acyclicity plays the most important role in the proof, because the derived category is balanced and hence a faithful functor from it is conservative as we will show in Lemma \ref{Faithful functor from balanced category is conservative}. First, we show Kiehl's Theorem B for both Banach modules and objects in the derived category in Corollary \ref{Theorem B}. Using Kiehl's Theorem B for objects in the derived category and Balmer's criterion for effective descent (cf.\ \cite{Bal12} Corollary 3.1), which is only applicable to a triangulated category, we show that any descent problem for Banach modules has a solution in the derived category. Using general results by Schneiders on the left heart cohomology for a quasi-Abelian category, we show that the solution in the derived category is isomorphic to a Banach module to conclude the solvability of the descent problem within the category of Banach modules.

\vspace{0.1in}
We summarise the contents of this paper. Section \S \ref{Preliminaries} recollects standard convention and terminology. First, in \S \ref{Banach algebras}, we recall terminology on Banach modules and quasi-Abelian categories. Secondly, in \S \ref{Derived functors}, we recall the theory of derived categories and derived functors for quasi-Abelian categories, specialising the discussion for Banach modules. Thirdly, in \S \ref{Derived analytic geometry}, we recall homotopy epimorphisms and derived Tate's acyclicity for commutative Banach algebras.

\vspace{0.1in}
In \S \ref{Homotopy Zariski open immersions versus closed immersions}, we study homotopy epimorphisms between the Banach $R$-algebras $\rC(X,R)$ of continuous functions $X \to R$, for compact Hausdorff topological spaces $X$ and a commutative Banach ring $R$, and characterise homotopy epimorphisms as closed immersions in three subsections. First, in \S \ref{Trivial examples}, we show that a clopen subset of the spectrum of $\rC(X, R)$ corresponds to a homotopy epimorphism and a disjoint clopen cover satisfies derived Tate's acyclicity. Secondly, in \S \ref{Projectivity criterion}, we give a criterion for the property that the closed ideal $I_{X,R,K} \subset \rC(X,R)$, corresponding to a closed subset $K \subset X$, is projective and apply it to obtain a criterion for the property that the restriction map $\pi_{X,R,K} \colon \rC(X,R) \to \rC(K,R)$ is a homotopy epimorphism. This criterion holds for $X$ and $K$ belonging to a quite restrictive class, while it puts little restrictions on $R$. Thirdly, in \S \ref{Flatness criterion}, we give a criterion for the property that $I_{X,R,K}$ is flat and apply it to obtain another criterion for the property that $\pi_{X,R,K}$ is a homotopy epimorphism. It requires $R$ to be $\R$, $\C$, a finite field, or a local field, while $X$ and $K$ are allowed to be quite general. More precisely, we only assume that $X$ is totally disconnected when $R$ is non-Archimedean.

\vspace{0.1in}
In \S \ref{Derived covers versus topological covers}, we study derived Tate's acyclicity for the Banach $R$-algebras $\rC(X,R)$ and characterise derived Tate's acyclicity in terms of topological covers in three subsections. First, in \S \ref{Topological covers}, we introduce the notion of topological cover in a way that depends on whether $R$ is Archimedean or non-Archimedean. When $R$ is Archimedean, the notion of a topological cover of $X$ in our context is equivalent to that of a family of closed immersions into $X$ admitting a finite jointly surjective subfamily. On the other hand, when $R$ is non-Archimedean, then it is equivalent to a  family of continuous maps whose image by the Banaschewski compactification functor is a family of closed immersion admitting a finite jointly surjective subfamily. Here, Banaschewski compactification means a non-Archimedean counterpart of the Stone--\Cech compactification, i.e.\  the universal totally disconnected Hausdorff compactification. Secondly, in \S \ref{Acyclicity for a topological cover by closed immersions}, we show that, under the hypothesis stated in \S \ref{Flatness criterion}, a family of closed subsets of $X$ is a topological cover if and only if the corresponding set of homomorphisms from $\rC(X,R)$ satisfies derived Tate's acyclicity (or equivalently, Tate's acyclicity). Thirdly, in \S \ref{Acyclicity for a general topological cover}, we show that a family of continuous maps to $X$ is a topological cover if and only if the corresponding family of homomorphisms from $\rC(X,R)$ satisfies derived Tate's acyclicity (or equivalently, Tate's acyclicity) under conditions weaker than that in \S \ref{Flatness criterion}. For example, we do not impose $X$ to be totally disconnected when $R$ is non-Archimedean, as we have used Banaschewski compactification functor in the formulation of the notion of a topological cover.

\vspace{0.1in}
In \S \ref{Application to derived and non-derived descent}, we apply our main results to the problem of derived and non-derived effective descent for complexes of Banach modules over $\rC(X,R)$, under the same hypothesis on $X$ and $R$ as in the previous two sections. We prove that for a finite jointly surjective family $S$ of closed immersions into $X$, any Banach $\rC(X,R)$-module (resp.\ object of the derived category $\Derba_{\cC}(\rC(X,R))$) can be uniquely reconstructed from its restrictions to each $K \hookrightarrow X$ in $S$, and conversely any Banach $\rC(X,R)$-module (resp.\ object of $\Derba_{\cC}(\rC(X,R))$) is isomorphic to a family of Banach $\rC(K,R)$-modules (resp.\ objects of $\Derba_{\cC}(\rC(K,R))$) that are isomorphic on the intersections in a compatible way.

\section{Preliminaries}
\label{Preliminaries}

In this paper we use the following notation and conventions. If $\cC$ is a category we use the same symbol $\cC$ to denote the class of objects of $\cC$, so that $X \in \cC$ means that $X$ is an object of $\cC$. We fix a Grothendieck universe, and always implicitly use it in a standard way in order to avoid the set-theoretic problem on the localisation of a category. Monoid objects of a symmetric monoidal category are assumed to be commutative, and rings, algebras, and $C^*$-algebras are assumed to be unital and commutative.

\vspace{0.1in}
We follow the terminology of the theory of quasi-Abelian categories as developed in \cite{Sch99}. We briefly recall the basic terminology of loc.\ cit.\ used throughout the whole paper. Let $\cC$ be an additive category with all kernels and cokernels. A morphism $f$ in $\cC$ is said to be {\it strict} if the canonical morphism $\coim(f) \to \im(f)$ is an isomorphism. The additive category $\cC$ is said to be {\it quasi-Abelian} if the family of short exact sequences 
\begin{eqnarray*}
0 \to X \stackrel{f}{\to} Y \stackrel{g}{\to} Z \to 0
\end{eqnarray*}
where $f$ and $g$ are strict morphisms forms a Quillen exact structure (cf.\ \cite{Sch99} Remark 1.1.11). We say that a sequence of morphisms 
\begin{eqnarray*}
X \stackrel{f}{\to} Y \stackrel{g}{\to} Z
\end{eqnarray*}
is {\it strictly exact at $Y$} (resp.\ {\it strictly coexact at $Y$}) if $\im(f) = \ker(g)$ and $f$ is a strict morphism (resp.\ $g$ is a strict morphism). These notions are extended to longer sequences of morphisms by asking that the above conditions hold for any pair of adjacent morphisms of the sequence. For example, for a short exact sequence being strictly exact is equivalent to being strictly coexact. We refer to \S 1.1.5 of \cite{Sch99} for the notions of exactness for functors between quasi-Abelian categories.

\vspace{0.1in}
In this section, we recall the basic properties of the quasi-Abelian category of Banach modules and recall the foundation of the derived analytic geometry as discussed in \cite{BB16}, \cite{BK17}, \cite{BBK19}, and \cite{BK20}.

\subsection{Banach algebras}
\label{Banach algebras}

A {\it normed set} is a set $X$ equipped with a map $\v{\cdot}_X \colon X \to [0,\infty)$. We note that many authors assume the condition that $\set{x \in X}{\v{x}_X = 0}$ is a singleton or the condition that $\set{x \in X}{\v{x}_X = 0}$ is the empty set in the definition of a  normed set, but we assume neither of them. For normed sets $X_0$ and $X_1$, we denote by $X_0 \odot X_1$ the normed set given as the set $X_0 \times X_1$ equipped with the map
\begin{eqnarray*}
\v{\cdot}_{X_0 \odot X_1} \colon X_0 \times X_1 & \to & [0,\infty) \\
(x_0,x_1) & \mapsto & \v{x_0}_{X_0} \v{x_1}_{X_1}.
\end{eqnarray*}
For normed sets $X_0$ and $X_1$, a map $\phi \colon X_0 \to X_1$ is said to be {\it bounded} if there exists a $C \in [0,\infty)$ such that for any $x \in X_0$, the inequality $\v{\phi(x)}_{X_1} \leq C \v{x}_{X_0}$ holds. We call the minimum of such a $C$ {\it the operator norm of $\phi$}. We denote by $\NSet$ the category of normed sets and bounded maps, which naturally forms a symmetric monoidal category with respect to $\odot$.

\vspace{0.1in}
A {\it complete norm} on an Abelian group $M$ is a map $\n{\cdot} \colon M \to [0,\infty)$ such that the map $M^2 \to [0,\infty), \ (m_0,m_1) \mapsto \n{m_0 - m_1}$ is a complete metric on $M$. A {\it Banach Abelian group} is an Abelian group $M$ equipped with a complete norm $\n{\cdot}_M$ on it. We can always regard a Banach Abelian group as a normed set and a complete metric space. A Banach Abelian group is said to be {\it non-Archimedean} if it is an ultrametric space.

\vspace{0.1in}
We denote by $\BAb$ the category of Banach Abelian groups and bounded group homomorphisms and by $\NBAb \subset \BAb$ the full subcategory of non-Archimedean Banach Abelian groups. We denote by $\BAbleq \subset \BAb$ (resp.\ $\NBAbleq \subset \NBAb$) the subcategory with the same class of objects and whose class of morphisms is the subclass of morphisms of operator norm $\leq 1$. The hom functors on $\BAb$ and $\NBAb$ can be canonically enriched to internal hom functors with respect to the pointwise operations and the operator norm.
 
\vspace{0.1in}
We equip $\BAb$ and $\BAbleq$ (resp.\ $\NBAb$ and $\NBAbleq$) with the symmetric monoidal structures given by the completed tensor product assigning to each pair $(M_0,M_1)$ of objects the completion $M_0 \wh{\otimes} M_1$ of $M_0 \otimes_{\Z} M_1$ with respect to the uniformity associated to the tensor seminorm $M_0 \otimes_{\Z} M_1 \to [0,\infty)$ assigning to each $m \in M_0 \otimes_{\Z} M_1$ the infimum of $\sum_{j=0}^{n} \n{(m_{0,j},m_{1,j})}_{M_0 \odot M_1}$ (resp.\ $\max_{j=0}^{n} \n{(m_{0,j},m_{1,j})}_{M_0 \odot M_1}$) for an $(m_{0,j},m_{1,j})_{j=0}^{n} \in (M_0 \odot M_1)^{n+1}$ with $n \in \N$ and $\sum_{j=0}^{n} m_{0,j} \otimes m_{1,j} = m$.  The symmetric monoidal structures on $\BAb$ and $\NBAb$ are closed, as the internal hom functors give their right adjoint functors.

\vspace{0.1in}
A {\it complete norm on a ring $R$} is a complete norm $\n{\cdot}$ on its underlying Abelian group satisfying the property that there exists a $C \in [0,\infty)$ such that for any $(a_0,a_1) \in R^2$, the inequality $\n{a_0 a_1} \leq C \n{a_0} \ \n{a_1}$ holds. We say that $\n{\cdot}$ is {\it submultiplicative} if such a $C$ can be taken as $1$ and the equality $\n{1} = 1$ holds unless $R = \ens{0}$. For example, the Euclidean norm $\v{\cdot}_{\infty}$ is a submultiplicative complete norm on $\Z$, $\R$, and $\C$. A {\it Banach ring} is a ring $R$ equipped with a complete norm $\n{\cdot}_R$ on it.

\vspace{0.1in}
We note that many authors assume the submultiplicativity of the norm in the definition of a Banach ring but it is easy to check that every Banach ring is isomorphic to a Banach ring whose norm is submultiplicative. Indeed, if $(R, \v{\cdot})$ is a Banach ring, we can define the new norm on $R$ assigning $\sup \set{\frac{\v{x y}}{\v{y}}}{y \in R \setminus \ens{0}}$ to each $x \in R$ that is equivalent to the given one and is submultiplicative. We refer to \cite{BGR} Proposition 1.2.1/2 for a proof of this fact for $\NBAb$, but the proof works for $\BAb$ as well. The advantage of our definition is that it makes manifest that the notion of a monoid object of $\BAb$ (resp.\ $\NBAb$) for the tensor product $\wh{\otimes}$ is equivalent to that of a Banach ring (resp.\ non-Archimedean Banach ring) in our sense, while the submultiplicativity is a natural condition for the notion of monoid object of $\BAbleq$ (resp.\ $\NBAbleq$). In addition,  we recall that we assume the commutativity of rings because our aim is to study algebras of continuous functions, although we do not have to assume it in most parts of this paper. 

\vspace{0.1in}
Let $\cC$ denote $\BAb$ (resp.\ $\BAbleq$, $\NBAb$, $\NBAbleq$). Let $R$ be a monoid object of $\cC$. A {\it Banach $R$-module} is an $R$-module $M$ equipped with a complete norm $\n{\cdot}_M$ on its underlying Abelian group satisfying the property that there exists a $C \in [0,\infty)$ such that for any $(a,m) \in R \times M$, the inequality $\n{am}_M \leq C \n{a}_R \n{m}_M$ holds. The norm of a Banach $R$-module is said to be {\it submultiplicative} if such a $C$ can be taken as $1$. Then, the notion of $R$-module object of $\cC$ for the monoid object $R$ of $\cC$ is equivalent to that of a Banach $R$-module (resp.\ Banach $R$-module with submultiplicative norm, non-Archimedean Banach $R$-module, non-Archimedean Banach $R$-module with submultiplicative norm).

\vspace{0.1in}
We denote by $\Mod_{\cC}(R)$ the category of $R$-module objects of $\cC$ and $R$-linear homomorphisms of $\cC$. By \cite{BB16} Proposition 3.15 and \cite{BB16} Proposition 3.18, $\Mod_{\cC}(R)$ forms a quasi-Abelian category. The hom functor $\Hom_{\Mod_{\cC}(R)}$ can be canonically enriched to an internal hom functor with respect to the pointwise operations and the operator norm. We denote the internal hom functors by $\uHom_{\Mod_{\cC}(R)}$ to distinguish it from the usual hom-set functors denoted $\Hom_{\Mod_{\cC}(R)}$. We equip $\Mod_{\cC}(R)$ with the closed symmetric monoidal structure $\wh{\otimes}_R$ assigning to each pair $(M_0,M_1) \in \Mod_{\cC}(R)^2$ the cokernel  in $\cC$ of the difference of the left and right scalar multiplications $M_0 \wh{\otimes} R \wh{\otimes} M_1 \rightrightarrows M_0 \wh{\otimes} M_1$, which naturally forms an object of $\Mod_{\cC}(R)$.

\vspace{0.1in}
By construction, $\wh{\otimes}_R$ is naturally isomorphic to the completed tensor product obtained by replacing $\otimes_{\Z}$ in the definition of $\wh{\otimes}$ by $\otimes_R$, and satisfies the universality on bounded $R$-bilinear homomorphisms, i.e.\ $\Hom_{\Mod_{\cC}(R)}(M_0 \wh{\otimes} M_1,M_2)$ is in natural bijection with the set of bounded $R$-bilinear homomorphisms $M_0 \odot M_1 \to M_2$ for any $(M_0, M_1, M_2) \in \Mod_{\cC}(R)^3$. If $\cC$ is $\BAb$ or $\NBAb$, then the symmetric monoidal structure on $\Mod_{\cC}(R)$ is closed, as the internal hom functor gives its right adjoint functor.
 
\vspace{0.1in}
Let $M \in \Mod_{\cC}(R)$. We say that $M$ is {\it projective} if the external hom functor
\begin{eqnarray*}
\Hom_{\Mod_{\cC}(R)}(M,\cdot) \colon \Mod_{\cC}(R) \to \Ab
\end{eqnarray*}
is exact, or equivalently, by the left strong exactness of $\Hom_{\Mod_{\cC}(R)}$ with respect to the second argument, sends any strict epimorphism to a surjective map (cf.\ \cite{Sch99} Definition 1.3.18 and \cite{BK20} Proposition 2.4 (i)). We recall the simplest example of projective object for the reader's convenience. For an $r \in [0,\infty)$, we denote by $R_r$ the regular $R$-module object whose norm is rescaled by $r$ when $r > 0$ and the closed ideal $\ens{0} \subset R$ when $r = 0$. In particular, $R_1$ is the regular $R$-module object. We note that $R_r \cong R_1$ in $\Mod_{\cC}(R)$ for any $r \neq 0$, when $\cC$ is $\BAb$ or $\NBAb$ but this is not necessarily true if $\cC$ is $\BAbleq$ or $\NBAbleq$. Later on these modules will be used in universal constructions in the categories $\BAbleq$ and $\NBAbleq$. 

\begin{prp}
\label{projectivity of R}
If $\cC$ is $\BAb$ or $\NBAb$, $R_r$ is a projective object of $\Mod_{\cC}(R)$.
\end{prp}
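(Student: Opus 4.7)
\vspace{0.1in}
The plan is to verify projectivity by directly using the characterisation recalled just above the statement, namely that the external hom functor $\Hom_{\Mod_{\cC}(R)}(R_r, \cdot)$ must send every strict epimorphism to a surjection of sets. The case $r = 0$ is trivial since $R_0 = \ens{0}$ is the zero object of $\Mod_{\cC}(R)$, so I would concentrate on the case $r > 0$.

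\vspace{0.1in}
The key observation I would exploit is that, for any $N \in \Mod_{\cC}(R)$, evaluation at the unit induces a natural bijection between $\Hom_{\Mod_{\cC}(R)}(R_r, N)$ and the underlying set of $N$. One direction is immediate from $R$-linearity: any morphism $h \colon R_r \to N$ is determined by $h(1)$. Conversely, given $n \in N$, the $R$-linear map $a \mapsto a n$ is automatically bounded, because the Banach $R$-module structure on $N$ provides a constant $D$ with $\n{a n}_N \leq D \n{a}_R \n{n}_N$, and the norm on $R_r$ is just a positive rescaling of the norm on $R$, so this inequality translates into a bound of the required form for a morphism in $\Mod_{\cC}(R)$. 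This reduces projectivity to a purely set-theoretic lifting question for elements of the codomain of a strict epimorphism.

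\vspace{0.1in}
Given a strict epimorphism $f \colon N \to N'$ in $\Mod_{\cC}(R)$ and a morphism $g \colon R_r \to N'$, I would then argue as follows. In $\BAb$ and $\NBAb$ the underlying set-theoretic map of a strict epimorphism is surjective: this follows from the explicit descriptions of $\ker$, $\coker$, $\im$ and $\coim$ recalled in \S \ref{Banach algebras}, together with the combined conditions $\coker(f) = 0$ and $\coim(f) \cong \im(f)$, which force $\im(f)$ to coincide with the whole codomain. The same surjectivity on underlying sets then holds in $\Mod_{\cC}(R)$, since the forgetful functor preserves kernels, cokernels and strictness. Hence I can pick any $n \in N$ with $f(n) = g(1)$ and set $h(a) := a n$; the identity $(f \circ h)(a) = a f(n) = a g(1) = g(a)$ shows $f \circ h = g$, producing the desired lift.

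\vspace{0.1in}
I do not foresee a substantial obstacle: the only delicate point is the set-theoretic surjectivity of strict epimorphisms in $\BAb$ and $\NBAb$, which is a standard feature of these quasi-Abelian categories and is essentially implicit in the description of their exact structure. Everything else is a straightforward manipulation with the universal property of $R_r$ as the free Banach $R$-module of rank one on a generator of norm $r$.
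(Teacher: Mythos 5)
Your proposal is correct and follows essentially the same route as the paper: both reduce the lifting problem to finding a set-theoretic preimage of the image of $1$ under the given morphism, obtain that preimage from the surjectivity of strict epimorphisms (which you derive from the descriptions of $\coim$ and $\im$, and which the paper derives from the isomorphism $M_0/\ker(\phi) \to M_1$ induced by strictness), and then define the lift by $a \mapsto am$, checking boundedness via the module norm inequality. Your packaging of the first step as the natural bijection $\Hom_{\Mod_{\cC}(R)}(R_r,N) \cong N$ is only a cosmetic difference.
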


\begin{proof}
If $r  = 0$, then $R_r$ is a zero object of $\Mod_{\cC}(R)$ and hence is a projective object of $\Mod_{\cC}(R)$. Suppose $r > 0$. 
Let $\phi \colon M_0 \to M_1$ be a strict epimorphism in $\Mod_{\cC}(R)$, and $\psi \colon R_r \to M_1$ a morphism in $\Mod_{\cC}(R)$. By the strictness of $\phi$, the morphism $\ol{\phi} \colon M_0/\ker(\phi) \to M_1$ associated to $\phi$ by the universality of the coimage is an isomorphism in $\Mod_{\cC}(R)$. Take a representative $m \in M_0$ of $\ol{\phi}^{-1}(\psi(1))$. Then, the map $\tl{\psi} \colon R_r \to M_0, \ a \mapsto a m$ is of operator norm $\leq \n{m}_M < \infty$, and hence is a morphism in $\Mod_{\cC}(R)$. By the definition, we have $\phi \circ \tl{\psi} = \psi$.
\end{proof}

We say that $M$ is {\it strongly flat} if the functor
\begin{eqnarray*}
(\cdot) \wh{\otimes}_R M \colon \Mod_{\cC}(R) \to \Mod_{\cC}(R)
\end{eqnarray*}
is strongly exact, or equivalently, by the right strong exactness of $\wh{\otimes}_R$, preserves the kernel of any (not necessarily strict) morphism (cf.\ \cite{BK20} Definition 3.2), and is {\it strongly internally injective} if the internal hom functor
\begin{eqnarray*}
\uHom_{\Mod_{\cC}(R)}(\cdot,M) \colon \Mod_{\cC}(R)^{\op} \to \Mod_{\cC}(R)
\end{eqnarray*}
is strongly exact, or equivalently, by the left strong exactness of $\uHom_{\Mod_{\cC}(R)}$ with respect to the first argument, preserves the cokernel of any (not necessarily strict) morphism, and we say that $M$ is {\it faithfully strongly internally injective} if the internal hom functor
\begin{eqnarray*}
\uHom_{\Mod_{\cC}(R)}(\cdot,M) \colon \Mod_{\cC}(R)^{\op} \to \Mod_{\cC}(R)
\end{eqnarray*}
preserves and reflects the cokernel of any (not necessarily strict) morphism.

\vspace{0.1in}
A {\it complete norm on an $R$-algebra} is a complete norm $\n{\cdot}$ on the underlying $R$-module and the underlying ring. A {\it Banach $R$-algebra} is an $R$-algebra $A$ equipped with a complete norm $\n{\cdot}_A$ on it. Then, the notion of a monoid object of $\Mod_{\cC}(R)$ is equivalent to that of a Banach $R$-algebra (resp.\ Banach $R$-algebra with submultiplicative norm, non-Archimedean Banach $R$-algebra, non-Archimedean Banach $R$-algebra with submultiplicative norm).

\begin{exm}
For a compact Hausdorff topological space $X$, we denote by $\rC(X,R)$ the set of continuous maps $X \to R$ equipped with the pointwise operations and the complete norm $\n{\cdot}_{\rC(K,R)}$ on it given as the supremum norm. The Banach $R$-algebra $\rC(X,R)$ is a natural extension of the notion of a $C^*$-algebra, as we will explain in the second paragraph of \S \ref{Derived analytic geometry}.
\end{exm}

We introduce an analytic analogue of the duality between the flatness and the injectivity. We abbreviate the internal hom functor
\begin{eqnarray*}
\uHom_{\Mod_{\cC}(R)}(\cdot, R_1) \colon \Mod_{\cC}(R)^{\op} \to \Mod_{\cC}(R)
\end{eqnarray*}
to $(\cdot)^{\vee}$. Since $R_1$ is the regular $R$-module object, $(\cdot)^{\vee}$ plays a role of the dual. Let $A$ be a monoid object of $\Mod_{\cC}(R)$. Then $A$ forms a monoid object of $\cC$ via the forgetful functor $\Mod_{\cC}(R) \to \cC$, and every object of $\Mod_{\cC}(A)$ is an object of $\Mod_{\cC}(R)$ via the restriction of scalars by the structure morphism, i.e.\ the unit $R \to A$. We denote by $\Res_A^R$ the forgetful functor $\Mod_{\cC}(A) \to \Mod_{\cC}(R)$.

\vspace{0.1in}
For any $(M_0,M_1) \in \Mod_{\cC}(A) \times \Mod_{\cC}(R)$, we denote by $\uHom_{\Mod_{\cC}(R)}^A(M_0,M_1) \in \Mod_{\cC}(A)$ the object $\uHom_{\Mod_{\cC}(R)}(\Res_A^R(M_0),M_1)$ of $\Mod_{\cC}(R)$ equipped with a natural action of $A$. By construction, $\uHom_{\Mod_{\cC}(R)}^A$ gives a functor $\Mod_{\cC}(A)^{\op} \times \Mod_{\cC}(R) \to \Mod_{\cC}(A)$, which is right adjoint to $\Res_A^R \circ \wh{\otimes}_A \colon \Mod_{\cC}(A)^2 \to \Mod_{\cC}(R)$ if $\cC$ is $\BAb$ or $\NBAb$, in the sense that for any $(M_0, M_1) \in \Mod_{\cC}(A)^2$ and any $M_2 \in \Mod_{\cC}(R)$, there is a natural isomorphism
\begin{eqnarray*}
\uHom_{\Mod_{\cC}(R)}(\Res_A^R(M_0 \wh{\otimes}_A M_1),M_2) \cong \Res_A^R(\uHom_{\Mod_{\cC}(A)}(M_0,\uHom_{\Mod_{\cC}(R)}^A(M_1,M_2)))
\end{eqnarray*}
that is even an isometry. We also abbreviate the functor
\begin{eqnarray*}
\uHom_{\Mod_{\cC}(R)}^A(\cdot, R_1) \colon \Mod_{\cC}(A)^{\op} \to \Mod_{\cC}(A).
\end{eqnarray*}
to $(\cdot)^{\vee}$.

\begin{prp}
\label{dual of flatness}
Suppose that $\cC$ is $\BAb$ or $\NBAb$, and $R_1$ is a faithfully strongly internally injective object of $\Mod_{\cC}(R)$. For any $M \in \Mod_{\cC}(A)$, $M$ is a strongly flat object of $\Mod_{\cC}(A)$ if and only if $M^{\vee}$ is a strongly internally injective object of $\Mod_{\cC}(A)$.
\end{prp}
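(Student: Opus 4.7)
The plan is to use the adjunction
\[(X \wh{\otimes}_A M)^{\vee} \cong \uHom_{\Mod_{\cC}(A)}(X, M^{\vee})\]
natural in $X \in \Mod_{\cC}(A)$, obtained by specialising the adjunction isomorphism recalled just before the statement to $M_1 = M$ and $M_2 = R_1$. Combined with the (faithful) strong internal injectivity of $R_1$, this will translate tensor-side information about $M$ into $\uHom$-side information about $M^{\vee}$.

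For every morphism $g \colon U \to V$ in $\Mod_{\cC}(A)$ with kernel inclusion $\iota \colon K \hookrightarrow U$, I introduce two canonical comparison morphisms. First,
\[\phi_g \colon K \wh{\otimes}_A M \to \ker(g \wh{\otimes}_A M),\]
the map into the kernel induced by $\iota \wh{\otimes}_A M$; by definition $M$ is strongly flat if and only if $\phi_g$ is an isomorphism for every $g$. Second,
\[\psi_g \colon \coker(\uHom_{\Mod_{\cC}(A)}(g, M^{\vee})) \to \uHom_{\Mod_{\cC}(A)}(K, M^{\vee}),\]
the map out of the cokernel induced by $\uHom_{\Mod_{\cC}(A)}(\iota, M^{\vee})$; by definition $M^{\vee}$ is strongly internally injective if and only if $\psi_g$ is an isomorphism for every $g$.

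The heart of the argument is to establish a natural identification $\phi_g^{\vee} \cong \psi_g$. Apply $(-)^{\vee}$ to $\phi_g$. Its codomain $(K \wh{\otimes}_A M)^{\vee}$ is identified with $\uHom_{\Mod_{\cC}(A)}(K, M^{\vee})$ via the adjunction. The strong internal injectivity of $R_1$ provides a natural isomorphism $(\ker h)^{\vee} \cong \coker(h^{\vee})$ for every morphism $h$ in $\Mod_{\cC}(R)$; applied to $h = g \wh{\otimes}_A M$ and combined again with the adjunction, this identifies the domain $(\ker(g \wh{\otimes}_A M))^{\vee}$ of $\phi_g^{\vee}$ with $\coker(\uHom_{\Mod_{\cC}(A)}(g, M^{\vee}))$. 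A diagram chase using naturality and the universal properties of the kernel and the cokernel then confirms that $\phi_g^{\vee}$ coincides with $\psi_g$ under these identifications.

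With this in hand, the equivalence is immediate. The forward implication is clear since dualisation sends isomorphisms to isomorphisms. The reverse implication uses the faithful half of the hypothesis on $R_1$, which is set up precisely so that $\uHom_{\Mod_{\cC}(R)}(-, R_1)$ reflects isomorphisms (the preservation and reflection of cokernels forces $P^{\vee} = 0 \Rightarrow P = 0$ and, together with strong exactness, gives conservativity in the quasi-abelian setting); thus $\psi_g \cong \phi_g^{\vee}$ being an isomorphism forces $\phi_g$ to be one. The step I expect to be the main obstacle is the identification $\phi_g^{\vee} \cong \psi_g$: the two morphisms live in dual pairs by construction, but matching them up requires carefully tracking the natural maps induced by $\iota$ through the adjunction and through the kernel-to-cokernel duality induced by $R_1$, which is a non-trivial naturality check.
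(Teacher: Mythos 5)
Your proposal is correct and follows essentially the same route as the paper: the paper's proof also takes an arbitrary kernel diagram (phrased as a strictly exact sequence $0 \to M_0 \to M_1 \to M_2$), uses the faithful strong internal injectivity of $R_1$ to pass between strict exactness of $C \wh{\otimes}_A M$ and of $(C \wh{\otimes}_A M)^{\vee}$, and then invokes the adjunction between $\Res_A^R \circ \wh{\otimes}_A$ and $\uHom_{\Mod_{\cC}(R)}^A(\cdot,R_1)$ to identify the latter with $\uHom_{\Mod_{\cC}(A)}(C,M^{\vee})$. Your version merely makes explicit, via the comparison morphisms $\phi_g$ and $\psi_g$, the naturality check that the paper leaves implicit.
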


\begin{proof}
We denote by $C$ a strictly exact sequence $0 \to M_0 \to M_1 \to M_2$ in $\Mod_{\cC}(A)$. The strict exactness of $C \wh{\otimes}_A M$ is equivalent to that of $(C \wh{\otimes}_A M)^{\vee}$ by the faithful strong internal injectivity of $R_1$, and hence is equivalent to that of $\uHom_{\Mod_{\cC}(A)}(C,M^{\vee})$ by the adjoint property of $\Res_A^R \circ \wh{\otimes}_A$ and $\uHom_{\Mod_{\cC}(R)}^A$. This implies the assertion.
\end{proof}

\subsection{Derived functors}
\label{Derived functors}

Henceforth, we only consider the case where $\cC$ is $\BAb$ or $\NBAb$. When $\cC = \BAb$, then we put $\cC_{\leq 1} \coloneqq \BAbleq$. When $\cC = \NBAb$, then we put $\cC_{\leq 1} \coloneqq \NBAbleq$. Let $R$ be a monoid object of $\cC$. We recall the construction of the functorial projective resolutions in $\Mod_{\cC}(R)$.

\vspace{0.1in}
For a family $(M_i)_{i \in I}$ of objects of $\Mod_{\cC}(R)$, the coproduct (resp.\ product) of the underlying objects of $\cC_{\leq 1}$, i.e.\ the completion of the algebraic direct sum with respect to the $\ell^1$-norm when $\cC = \BAb$ and the $\ell^{\infty}$-norm when $\cC = \NBAb$ (resp.\ the submodule of the algebraic direct product consisting of bounded families equipped with the $\ell^{\infty}$-norm), naturally forms an object of $\Mod_{\cC}(R)$, which we denote by $\wh{\bigoplus}^{\cC_{\leq 1}}_{i \in I} M_i$ (resp.\ $\prod^{\cC_{\leq 1}}_{i \in I} M_i$). We note that $\wh{\bigoplus}^{\cC_{\leq 1}}_{i \in I}$ (resp.\ $\prod^{\cC_{\leq 1}}_{i \in I}$) does not give a functor on $\Mod_{\cC}(R)$ unless $I$ is a finite set. Whereas, when $M_i$ is a zero object for all but finitely many $i \in I$, then the definition of $\wh{\bigoplus}^{\cC_{\leq 1}}_{i \in I} M_i$ (resp.\ $\prod^{\cC_{\leq 1}}_{i \in I} M_i$) extends the definition of the direct sum (resp.\ product) of the essentially finite family $(M_i)_{i \in I}$ in $\Mod_{\cC}(R)$ to a definable functor. In the case when $M_i$ is a zero object for all but finitely many $i \in I$, we will simplify the notation by denoting $\wh{\bigoplus}^{\cC_{\leq 1}}_{i \in I} M_i$ by $\bigoplus_{i \in I} M_i$ (resp.\ $\prod^{\cC_{\leq 1}}_{i \in I} M_i$ by $\prod_{i \in I} M_i$).

\vspace{0.1in}
We denote by $\Ch_{\cC}(R)$ the category of chain complexes of $\Mod_{\cC}(R)$ and chain homomorphisms. A morphism $f = (f_n)_{n \in \Z} \colon M = (M_n,d_{M_n})_{n \in \Z} \to N = (N_n,d_{N_n})_{n \in \Z}$ in $\Ch_{\cC}(R)$ is said to be a {\it quasi-isomorphism} if the mapping cone
\begin{eqnarray*}
\Cone(f) \coloneqq 
\left( M_{n+1} \oplus N_n, 
  \left(
    \begin{array}{cc}
       - d_{M_{n+1}} & 0 \\
       f_{n+1} & d_{N_n}
    \end{array}
   \right)
\right)_{n \in \Z}
\end{eqnarray*}
is strictly exact. We denote by $\cK_{\cC}(R)$ the {\it homotopy category} of $\Ch_{\cC}(R)$ that is obtained from $\Ch_{\cC}(R)$ by identifying homotopic chain homomorphisms.

\vspace{0.1in}
We define {\it the derived category $\Der_{\cC}(R)$ of $\Mod_{\cC}(R)$} as the right localisation of $\cK_{\cC}(R)$ by the right multiplicative system given by the class of homotopy classes of quasi-isomorphisms in $\Ch_{\cC}(R)$, or equivalently, the quotient of $\cK_{\cC}(R)$ by the null system of strictly exact complexes.

\vspace{0.1in}
We note that the notion of quasi-isomorphism for chain complexes of $\Mod_{\cC}(R)$ is not equivalent to that of a chain homomorphism which induces isomorphisms between the ``naively defined'' cohomology groups, i.e.\ the cohomology groups defined as
\begin{eqnarray*}
\rH_{\naive}^n(M) \coloneqq \frac{\ker(d_{M_n})}{\im(d_{M_{n-1}})}
\end{eqnarray*}
for a chain complex $M = (M_n,d_{M_n})_{n \in \Z}$, where $\im(d_{M_{n-1}})$ denotes the categorical image of $d_{M_{n -1}}$. For example, let $\v{\cdot}_0$ denote the trivial norm on $\Z$. The sequence
\begin{eqnarray*}
\cdots \to 0 \to (\Z,\v{\cdot}_{\infty}) \stackrel{\id_{\Z}}{\to} (\Z,\v{\cdot}_0) \to 0 \to \cdots
\end{eqnarray*}
is exact as a chain complex of $\Z$-modules, but is not strictly exact as a chain complex of $\Mod_{\cC}(\Z,\v{\cdot}_{\infty})$. On the other hand, the sequence
\begin{eqnarray*}
\cdots \to 0 \to \ell^1(\N,(\R,\v{\cdot}_{\infty})) \hookrightarrow \ell^2(\N,(\R,\v{\cdot}_{\infty})) \to 0 \to \cdots
\end{eqnarray*}
is neither exact as a chain complex of $\R$-vector spaces nor strictly exact in $\Ch_{\cC}(\R,\v{\cdot}_{\infty})$. These are typical examples of chain complexes in the analytic setting which are not strictly exact but whose naively defined cohomology groups vanish. It is possible to give cohomological criteria for checking strict exactness in $\Ch_{\cC}(R)$ but these involve the use of the left or right t-structure of $\Der_{\cC}(R)$ that will not be used in this work until \S \ref{Application to derived and non-derived descent} for the application to the descent theory.

\vspace{0.1in}
We recall the following construction. Let $M = (M_n, d_{M_n})_{n \in \Z} \in \Ch_{\cC}(R)$ and denote by $\Cyl(M) = (\Cyl(M)_n, d_{\Cyl(M)_n})_{n \in \Z}$ the {\it cylinder object} of $M$ defined as
\begin{eqnarray*}
(\Cyl(M)_n, d_{\Cyl(M)_n}) = 
\left( M_n \oplus M_{n+1} \oplus M_n,
  \left(
    \begin{array}{ccc}
    d_{M_n} & \id_{M_{n+1}} & 0 \\
    0 & - d_{M_{n+1}} & 0 \\
    0 & - \id_{M_{n+1}} & d_{M_n}
    \end{array}
  \right)
\right).
\end{eqnarray*}
The chain complex $\Cyl(M)$ is equipped with the chain homomorphisms $\pi \colon \Cyl(M) \to M$, defined as $\pi = (\id_{M_n},0,\id_{M_n})_{n \in \Z}$ and $i_0,i_1 \colon M \to \Cyl(M)$ defined as
\begin{eqnarray*}
i_0 = 
  \left(
    \begin{array}{c}
    \id_{M_n} \\
    0  \\
    0
    \end{array}
  \right)_{n \in \Z}, \ \ i_1 = 
  \left(
    \begin{array}{c}
    0 \\
    0  \\
    \id_{M_n}
    \end{array}
  \right)_{n \in \Z}.
\end{eqnarray*}
We note that $\pi \circ i_0 = \pi \circ i_1 = \id_M$ and one can easily check that $i_0 \circ \pi$, $i_1 \circ \pi$ and $\id_{\Cyl(M)}$ are homotopic chain homomorphisms. This shows that both $i_0$ and $i_1$ represent the inverse of the isomorphism in $\cK_{\cC}(R)$ represented by $\pi$. The most important property of $\Cyl(M)$ is that whenever two homotopic maps $f,g \colon M \to N$ are considered, then these factor through $\Cyl(M)$ as $f = r \circ i_0$ and $g = r \circ i_1$ for a morphism $r \colon \Cyl(M) \to N$.

\vspace{0.1in}
The following property follows from the work of Schneiders in \cite{Sch99}, but it is not obvious. We prove the proposition only for the quasi-Abelian categories $\Mod_{\cC}(R)$ but the same proof works for any quasi-Abelian category {\it mutatis mutandis}.

\begin{prp}
\label{qis saturated system}
The class of quasi-isomorphisms is closed under homotopy equivalences, and forms a saturated multiplicative system in $\Ch_{\cC}(R)$.
\end{prp}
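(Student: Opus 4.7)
The plan is to reduce both claims to showing that the full subcategory of strictly exact complexes forms a null system (thick triangulated subcategory) in the triangulated homotopy category $\cK_{\cC}(R)$, after which the general Verdier localisation machinery as adapted to the quasi-abelian setting in \cite{Sch99} \S 1.2 produces a saturated multiplicative system. Concretely, I first recall that $\cK_{\cC}(R)$ is triangulated with the usual mapping-cone triangles, a construction formally insensitive to whether $\Mod_{\cC}(R)$ is abelian or only quasi-abelian. The class of strictly exact complexes contains zero, is closed under shifts, and is closed under direct summands in $\cK_{\cC}(R)$; the only substantive axiom to check is the two-out-of-three property for distinguished triangles. For this I would rotate the triangle to present the third vertex as a mapping cone and extract its strict exactness from the long strictly exact sequence associated with a cone, as established for quasi-abelian categories in \cite{Sch99} \S 1.2. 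Given this, morphisms whose cone is strictly exact automatically form a saturated multiplicative system, and this class coincides by construction with the class of quasi-isomorphisms.

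For the closure under homotopy, suppose $f, g \colon M \to N$ are chain homotopic via $h = (h_n)_{n \in \Z}$. I would exhibit an explicit chain isomorphism $\Phi \colon \Cone(f) \to \Cone(g)$ in $\Ch_{\cC}(R)$, defined in each degree by the block matrix with $\id$ on the diagonal and $h_n$ in the off-diagonal position; a short direct calculation verifies that $\Phi$ commutes with the differentials, with inverse given by the analogous matrix with $-h_n$. Hence $\Cone(f) \cong \Cone(g)$ strictly in $\Ch_{\cC}(R)$, so strict exactness of one cone is equivalent to strict exactness of the other, which gives the closure. The same trick shows that any chain homotopy equivalence has a contractible cone, hence a strictly exact cone, and is therefore itself a quasi-isomorphism, and this observation in turn makes the passage from the multiplicative system in $\Ch_{\cC}(R)$ to its image in $\cK_{\cC}(R)$ well-defined.

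The hard part will be the two-out-of-three property for strict exactness under distinguished triangles. Unlike the abelian case, where it follows at once from the long exact sequence in cohomology, in the quasi-abelian setting one must control strictness of the relevant differentials, and as the examples $\ell^1(\N,(\R,\v{\cdot}_{\infty})) \hookrightarrow \ell^2(\N,(\R,\v{\cdot}_{\infty}))$ and $\id_{\Z} \colon (\Z, \v{\cdot}_{\infty}) \to (\Z, \v{\cdot}_0)$ exhibited just before the statement show, vanishing of the naively defined cohomology does not in general imply strict exactness. It is here that Schneiders's technical treatment of the mapping-cone sequence in the quasi-abelian framework is genuinely indispensable, and the remainder of the proof reduces to checking that his results apply verbatim in $\Mod_{\cC}(R)$.
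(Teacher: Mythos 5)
Your proposal is correct, and for the first half (the null system of strictly exact complexes in $\cK_{\cC}(R)$ and the resulting saturated multiplicative system) it follows essentially the same route as the paper, which invokes \cite{Sch99} Corollary 1.2.20 to identify that null system with the kernel of a cohomology functor and then cites \cite{StackP} for saturation; note that the closure under direct summands you assert, which is precisely what saturation requires beyond the null-system axioms, is itself most cleanly obtained from that same cohomology-functor characterisation, since the left heart is abelian and the relevant cohomology functors are additive, so you are leaning on Schneiders at that point too and not only for the two-out-of-three property. Where you genuinely diverge is in the proof that quasi-isomorphisms are closed under homotopy. The paper factors the two homotopic maps as $f = r \circ i_0$ and $g = r \circ i_1$ through the cylinder object $\Cyl(M)$, applies the octahedral axiom to obtain distinguished triangles relating $\Cone(f)$, $\Cone(g)$, $\Cone(r)$, $\Cone(i_0)$, and $\Cone(i_1)$, and then argues that $\Cone(i_0)$ and $\Cone(i_1)$ are strictly exact because $i_0$ and $i_1$ represent isomorphisms in $\cK_{\cC}(R)$ (via \cite{KS06} Exercise 11.5 and \cite{Sch99} Remark 1.2.2), before concluding with \cite{Sch99} Proposition 1.2.14. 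Your unipotent block-matrix isomorphism $\bigl(\begin{smallmatrix}\id & 0\\ h & \id\end{smallmatrix}\bigr) \colon \Cone(f)\to\Cone(g)$, whose compatibility with the differentials is exactly the homotopy identity $f-g = d_N h + h d_M$ and whose inverse is the analogous matrix with $-h$, is shorter and entirely elementary: it produces an isomorphism already in $\Ch_{\cC}(R)$, so strict exactness transfers between the two cones without any appeal to the cylinder, the triangulated structure, or the octahedral axiom. What the paper's route buys in exchange is that it only ever manipulates distinguished triangles and the null system, so it is the argument one would give in any triangulated quotient situation; your route buys a self-contained two-line computation that also immediately shows every homotopy equivalence has contractible, hence strictly exact, cone. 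Both arguments are valid.
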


\begin{proof}
The null system in $\cK_{\cC}(R)$ of strictly exact sequences coincides with the kernel of a cohomology functor by \cite{Sch99} Corollary 1.2.20, and hence the class of homotopy classes of quasi-isomorphisms in $\Ch_{\cC}(R)$ is a saturated multiplicative system in $\cK_{\cC}(R)$ by \cite{StackP} Lemma 13.6.11. This implies that the class of morphisms in $\Ch_{\cC}(R)$ homotopic to quasi-isomorphisms is a saturated multiplicative system in $\Ch_{\cC}(R)$. Therefore, it suffices to show that for any objects $M = (M_n,d_{M_n})_{n \in \Z}$ and $N = (N_n,d_{N_n})_{n \in \Z}$, any morphism $f \colon M \to N$ in $\Ch_{\cC}(R)$ homotopic to a quasi-isomorphism $g \colon M \to N$ in $\Ch_{\cC}(R)$ is a quasi-isomorphism.

\vspace{0.1in}
Consider the factorization $r \circ i_0 = f$ and $r \circ i_1 = g$ through the cylinder object $\Cyl(M)$. We obtain the distinguished triangles
\begin{eqnarray*}
\Cone(i_0) \to \Cone(f) & \to & \Cone(r) \to \Cone(i_0)[1] \\
\Cone(i_1) \to \Cone(g) & \to & \Cone(r) \to \Cone(i_1)[1]
\end{eqnarray*}
by the octahedral axiom, because $\cK_{\cC}(R)$ naturally forms a triangulated category (cf. \cite{Sch99} \S 1.2.1). Since $g$ is a quasi-isomorphism, $\Cone(g)$ is strictly exact. Therefore, in order to show that $f$ is a quasi-isomorphism, it suffices to show that $\Cone(i_0)$ and $\Cone(i_1)$ are strictly exact, by \cite{Sch99} Proposition 1.2.14. For this purpose, it suffices to show that a morphism in $\Ch_{\cC}(R)$ which represents an isomorphism in $\cK_{\cC}(R)$ is a quasi-isomorphism. By \cite{KS06} Exercise 11.5 the cone of any such morphism is isomorphic to $0$ in $\cK_{\cC}(R)$, and hence it is strictly exact by \cite{Sch99} Remark 1.2.2. This completes the proof.
\end{proof}

\begin{crl}
\label{derived isom is qis}
\begin{itemize}
\item[(i)] A morphism in $\Ch_{\cC}(R)$ represents an isomorphism in $\Der_{\cC}(R)$ if and only if it is a quasi-isomorphism in $\Ch_{\cC}(R)$.
\item[(ii)] A morphism in $\Der_{\cC}(R)$ is an isomorphism if and only if it is represented by a span of quasi-isomorphisms in $\Ch_{\cC}(R)$.
\item[(iii)] An $M \in \Ch_{\cC}(R)$ is strictly exact if and only if it is a zero object in $\Der_{\cC}(R)$.
\end{itemize}
\end{crl}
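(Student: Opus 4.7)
The plan is to deduce all three statements from Proposition \ref{qis saturated system} together with the standard theory of localisation by a saturated multiplicative system, keeping track of the fact that $\Der_{\cC}(R)$ is constructed as a localisation of $\cK_{\cC}(R)$ rather than of $\Ch_{\cC}(R)$ directly.

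For (i), the ``if'' direction is immediate: a quasi-isomorphism in $\Ch_{\cC}(R)$ descends to a morphism in $\cK_{\cC}(R)$ lying in the right multiplicative system by which $\Der_{\cC}(R)$ is obtained, and such morphisms are invertible in the localisation by construction. For the converse, I would invoke the characterisation of saturated multiplicative systems: a morphism of the ambient category becomes invertible in its localisation if and only if it already lies in the system. Proposition \ref{qis saturated system} guarantees both that the class of (homotopy classes of) quasi-isomorphisms is a saturated multiplicative system in $\cK_{\cC}(R)$ and that the property of being a quasi-isomorphism in $\Ch_{\cC}(R)$ is invariant under homotopy equivalence, so the characterisation transfers back to morphisms of $\Ch_{\cC}(R)$.

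For (ii), I would use the calculus of right fractions afforded by the multiplicative system structure. Any morphism $\phi$ in $\Der_{\cC}(R)$ is represented by a roof $X \xleftarrow{s} Z \xrightarrow{f} Y$ in $\cK_{\cC}(R)$ with $s$ the homotopy class of a quasi-isomorphism, so by (i) the morphism $s$ becomes an isomorphism in $\Der_{\cC}(R)$. Therefore $\phi$ is invertible in $\Der_{\cC}(R)$ if and only if $f$ is, and applying (i) once more this is equivalent to $f$ being a quasi-isomorphism. This gives exactly the desired characterisation in terms of a span of quasi-isomorphisms. For (iii), I would apply (i) to the canonical morphism $0 \to M$: the object $M$ is a zero object of $\Der_{\cC}(R)$ precisely when $0 \to M$ is an isomorphism there, hence by (i) precisely when it is a quasi-isomorphism, hence when $\Cone(0 \to M)$ is strictly exact; a direct inspection of the formula for the mapping cone shows $\Cone(0 \to M) \cong M$, which yields the claim.

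No deep obstacle is expected, as all heavy lifting has been absorbed into Proposition \ref{qis saturated system}. The most delicate point is the bookkeeping needed to move between morphisms in $\Ch_{\cC}(R)$ and their homotopy classes in $\cK_{\cC}(R)$, ensuring that the saturation statement formulated at the level of $\cK_{\cC}(R)$ is applied correctly to chain-level morphisms; the homotopy-invariance half of Proposition \ref{qis saturated system} is precisely what makes this translation safe.
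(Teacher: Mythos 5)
Your proposal is correct and follows essentially the same route as the paper: parts (i) and (ii) are deduced from Proposition \ref{qis saturated system} together with the standard characterisation of morphisms inverted by localisation at a saturated multiplicative system, and part (iii) follows from (i) applied to a zero morphism involving $M$. The only (immaterial) difference is that you apply (i) to $0 \to M$ while the paper uses $M \to 0$, whose cone is $M[1]$ rather than $M$; both are strictly exact exactly when $M$ is.
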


\begin{proof}
The assertions (i) and (ii) immediately follow from Proposition \ref{qis saturated system} and \cite{Sch20} Exercise 5.1. The assertion (iii) immediately follows from the assertion (i) applied to the zero morphism $M \to 0$ in $\Ch_{\cC}(R)$.
\end{proof}

For a normed set $X$, we put $\ell^{\cC}(X,R) \coloneqq \wh{\bigoplus}^{\cC_{\leq 1}}_{x \in X} R_{\v{x}_X}$. By construction, $\ell^{\cC}(\cdot,R)$ gives a functor $\NSet \to \Mod_{\cC}(R)$ that is left adjoint to the forgetful functor. Since $\wh{\bigoplus}^{\cC_{\leq 1}}$ preserves projectivity, $\ell^{\cC}(X,R)$ is a projective object of $\Mod_{\cC}(R)$ for any normed set $X$ by Proposition \ref{projectivity of R}. For any $M \in \Mod_{\cC}(R)$, the counit $\ell^{\cC}(M,R) \twoheadrightarrow M$ is a strict epimorphism by \cite{BK17} Lemma A.39 and \cite{BB16} Lemma 3.27, and permits to construct a projective resolution of $M$.

\vspace{0.1in}
Thus every object of $\Mod_{\cC}(R)$ admits a functorial projective resolution and hence $\Mod_{\cC}(R)$ has enough projective objects (cf.\ \cite{Sch99} Definition 1.3.20). In particular, for any additive functor $F$ from $\Mod_{\cC}(R)$ to a quasi-Abelian category, the class of projective objects of $\Mod_{\cC}(R)$ forms an $F$-projective additive subcategory (cf.\ \cite{Sch99} Definition 1.3.2) by \cite{Sch99} Remark 1.3.21, and hence $F$ is explicitly left derivable (cf.\ \cite{Sch99} Definition 1.3.1 and Definition 1.3.6).

\vspace{0.1in}
For an additive category $\varepsilon$, we denote by $\Chba(\varepsilon)$ the additive category of chain complexes bounded above of $\varepsilon$ and chain homomorphisms. We abbreviate the full subcategory $\Chba(\Mod_{\cC}(R)) \subset \Ch_{\cC}(R)$ to $\Chba_{\cC}(R)$. It admits a functorial projective resolution $P_{\cC}^{R} \colon \Chba_{\cC}(R) \to \Chba_{\cC}(R)$ by the argument above. For an $M = ((M_{i,j},\derv_{i,j})_{i \in \Z},(\derh_{i,j})_{i \in \Z})_{j \in \Z} \in \Chba(\Chba_{\cC}(R))$, we denote by $\Tot(M) \in \Chba_{\cC}(R)$ the total complex
\begin{eqnarray*}
\left( \bigoplus_{i \in \Z} M_{i,i+n}, \bigoplus_{i \in \Z} (\derv_{i,i+n} + (-1)^i \derh_{i,i+n}) \right)_{n \in \Z}
\end{eqnarray*}
of $M$ regarded as a double complex of $\Mod_{\cC}(R)$. We denote by $\Derba_{\cC}(R)$ the bounded above derived category of $\Mod_{\cC}(R)$, i.e.\ the localisation of the homotopy category of $\Chba_{\cC}(R)$ by the class of quasi-isomorphisms,  and by $\wh{\otimes}_R^{\bL} \colon \Derba_{\cC}(R)^2 \to \Derba_{\cC}(R)$ the left derived functor of $\hat{\otimes}_R$ assigning $\Tot(P_{\cC}^{R}(M) \wh{\otimes}_R P_{\cC}^{R}(N))$ to each $(M,N) \in \Derba_{\cC}(R)^2$. An argument completely parallel to that for the derived tensor product for algebraic modules implies the following:

\begin{prp}
\label{derived symmetric monoidal structure}
The following hold:
\begin{itemize}
\item[(i)] The tuple $(\Derba_{\cC}(R),\hat{\otimes}_R^{\bL},R)$ naturally forms a symmetric monoidal category.
\item[(ii)] For any $M \in \Mod_{\cC}(R)$ the restriction $\Mod_{\cC}(R) \to \Derba_{\cC}(R)$ of $M \wh{\otimes}_R^{\bL} (\cdot)$ is a left derived functor of $M \wh{\otimes}_R (\cdot)$.
\item[(iii)] For any $(M,N) \in \Chba_{\cC}(R)^2$ such that $N$ is termwise strongly flat, the natural morphism $\Tot(P_{\cC}^{R}(M) \wh{\otimes}_R P_{\cC}^{R}(N)) \to \Tot(M \wh{\otimes}_R N)$ in $\Chba_{\cC}(R)$ is a quasi-isomorphism.
\item[(iv)] For any morphism $f = (f_j)_{j \in \Z} \colon M \to N$ in $\Chba(\Chba_{\cC}(R))$ such that $f_j$ is a quasi-isomorphism for any $j \in \Z$, the natural morphism $\Tot(f) \colon \Tot(M) \to \Tot(N)$ in $\Chba_{\cC}(R)$ is a quasi-isomorphism.
\end{itemize}
\end{prp}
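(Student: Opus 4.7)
The plan is to prove the four parts in the order (iv), then (iii), then (ii), then (i), mirroring the standard proof for the algebraic derived tensor product, with strict exactness replacing ordinary exactness throughout.

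For (iv), I first form the columnwise mapping cone $\Cone(f) \in \Chba(\Chba_{\cC}(R))$ and observe that there is a natural isomorphism $\Tot(\Cone(f)) \cong \Cone(\Tot(f))$ in $\Chba_{\cC}(R)$. Since each $f_j$ is a quasi-isomorphism, each column of $\Cone(f)$ is strictly exact, so the problem reduces to the key lemma that a double complex $C \in \Chba(\Chba_{\cC}(R))$ with strictly exact columns has strictly exact totalisation. I would filter $C$ by the stupid truncations in the outer direction, obtaining short exact sequences of double complexes whose totalisations give distinguished triangles in $\cK_{\cC}(R)$, reducing to the case of a bounded double complex, which is then handled by induction on length and the octahedral axiom.

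For (iii), factor the natural morphism as
\[
\Tot(P_{\cC}^{R}(M) \wh{\otimes}_R P_{\cC}^{R}(N)) \to \Tot(P_{\cC}^{R}(M) \wh{\otimes}_R N) \to \Tot(M \wh{\otimes}_R N).
\]
The first arrow is induced columnwise by the quasi-isomorphism $P_{\cC}^{R}(N) \to N$ tensored with the termwise projective complex $P_{\cC}^{R}(M)$; each $P_{\cC}^{R}(M)_i$ is projective and therefore strongly flat, so tensoring with it preserves strict exactness and hence preserves quasi-isomorphisms, and (iv) yields a quasi-isomorphism of totalisations. The second arrow is handled symmetrically using the assumed termwise strong flatness of $N$. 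For (ii), apply (iv) to the columnwise morphism $P_{\cC}^{R}(M) \wh{\otimes}_R P_{\cC}^{R}(N) \to M \wh{\otimes}_R P_{\cC}^{R}(N)$: each $P_{\cC}^{R}(N)_j$ is projective hence strongly flat, so each column is a quasi-isomorphism, yielding $M \wh{\otimes}_R^{\bL} N \simeq M \wh{\otimes}_R P_{\cC}^{R}(N)$, which is the value at $N$ of the left derived functor of $M \wh{\otimes}_R (\cdot)$ computed via the projective resolution $P_{\cC}^{R}(N) \to N$.

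For (i), the associator, commutator, and left and right unitor descend from the corresponding natural isomorphisms for $\wh{\otimes}_R$ on the triple (resp.\ double) projective resolution $P_{\cC}^{R}(M) \wh{\otimes}_R P_{\cC}^{R}(N) \wh{\otimes}_R P_{\cC}^{R}(L)$, and the coherence axioms then propagate formally to $\Derba_{\cC}(R)$ via Corollary \ref{derived isom is qis}; unitality $R \wh{\otimes}_R^{\bL} M \simeq M$ reduces via (ii) to the projectivity of $R$ given by Proposition \ref{projectivity of R} applied with $r = 1$. The main obstacle is expected to be the key lemma underlying (iv): strict exactness is more fragile than ordinary exactness in a quasi-Abelian category, so one must carefully verify that the short exact sequences produced by the stupid truncation are strict, and that the triangulated reconstruction of $\Tot(C)$ from its truncations remains within the null system of $\cK_{\cC}(R)$.
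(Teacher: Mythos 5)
Your proposal follows essentially the same route as the paper: (iv) is the key assertion, established by identifying $\Cone(\Tot(f))$ with the totalisation of the columnwise cones $\Cone(f_j)$ and deducing strict exactness degreewise from the strictly exact columns (the paper runs a direct induction on $j$ rather than a stupid-truncation filtration, but this is the same acyclic-assembly argument), and (ii) and (iii) are then obtained exactly as you do, by applying (iv) to the natural morphisms $P_{\cC}^{R}(M)\wh{\otimes}_R P_{\cC}^{R}(N)\to M\wh{\otimes}_R P_{\cC}^{R}(N)$ and $P_{\cC}^{R}(M)\wh{\otimes}_R P_{\cC}^{R}(N)\to P_{\cC}^{R}(M)\wh{\otimes}_R N\to M\wh{\otimes}_R N$ together with the fact that projectives are strongly flat. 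Part (i) is likewise deduced from the symmetry and associativity of the total complex, so your argument matches the paper's in all essentials.
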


\begin{proof}
The assertion (i) follows from the symmetry and the associativity of the total complex. The assertion (ii) (resp.\ (iii)) follows from \cite{BK20} Proposition 3.11 and the assertion (iv) applied to the natural morphism
\begin{eqnarray*}
P_{\cC}^{R}(M) \wh{\otimes}_R P_{\cC}^{R}(N) \to M \wh{\otimes}_R P_{\cC}^{R}(N)
\end{eqnarray*}
in $\Chba_{\cC}(\Chba_{\cC}(R))$ for any $N \in \Mod_{\cC}(R)$ (resp.\ the natural morphisms
\begin{eqnarray*}
P_{\cC}^{R}(M) \wh{\otimes}_R P_{\cC}^{R}(N) \to P_{\cC}^{R}(M) \wh{\otimes}_R N \to M \wh{\otimes}_R N
\end{eqnarray*}
in $\Chba_{\cC}(\Chba_{\cC}(R))$). We show the assertion (iv). 

\vspace{0.1in}
Put $M = ((M_{i,j},\derh_{M_{i,j}})_{i \in \Z},(\derv_{M_{i,j}})_{i \in \Z})_{j \in \Z}$, $((N_{i,j},\derh_{N_{i,j}})_{i \in \Z},(\derv_{N_{i,j}})_{i \in \Z})_{j \in \Z}$, and $f_j = (f_{i,j})_{i \in \Z}$ for each $j \in \Z$. We have
\begin{eqnarray*}
& & \Cone(\Tot(f)) \\
& = & \left( \bigoplus_{i \in \Z} (M_{i+1,n-i} \oplus N_{i,n-i}), \bigoplus_{i \in \Z} 
  \left(
    \begin{array}{cc}
      - \derv_{M_{i+1,n-i}} - (-1)^{i+1} \derh_{M_{i+1,n-i}} & \derv_{N_{i,n-i}} + (-1)^i \derh_{N_{i,n-i}} \\
      f_{i+1,n-i} & 0 \\
    \end{array}
  \right)
\right)_{n \in \Z} \\
& = & \left( \bigoplus_{i \in \Z} \Cone(f_{n-i})_i, \bigoplus_{i \in \Z} (d_{\Cone(f_{n-i})_i} + (-1)^i (\derh_{M_{i+1,n-i}} + \derh_{N_{i,n-i}})) \right)_{n \in \Z},
\end{eqnarray*}
and hence the strict exactness of $\Cone(\Tot(f))$ at degree $n$ follows from the strict exactness of $\Cone(f_j)$ at degree $n - j$ applied inductively on $j \in \Z$.
\end{proof}

In particular, the derived tensor product can be computed by resolving only one argument by a flat resolution as usual in the algebraic setting.

\subsection{Derived analytic geometry}
\label{Derived analytic geometry}

We introduce the notion of homotopy epimorphism. When we deal with an appropriate formulation of the spectrum, the map of spectra induced by a homotopy epimorphism called a {\it formal homotopy Zariski open immersion}, because it plays the role of an open immersion in the derived geometry and especially in derived analytic geometry as formulated in \cite{BBK19}. Therefore a homotopy epimorphism is a derived counterpart of an open localisation. We usually drop the adjective ``formal'' when talking about formal homotopy Zariski open immersions as we will not study any ``non-formal'' ones.

\vspace{0.1in}
Let $R$ be a monoid object of $\cC$. We denote by $\Alg_{\cC}(R)$ the category of monoid objects of $\Mod_{\cC}(R)$ and monoid homomorphisms in $\Mod_{\cC}(R)$. Let $\pi \colon A_0 \to A_1$ be a morphism in $\Alg_{\cC}(R)$. We say that $\pi$ is a {\it homotopy epimorphism in $\Alg_{\cC}(R)$} if the multiplication $A_1 \wh{\otimes}_{A_0}^{\bL} A_1 \to A_1$ is an isomorphism in $\Derba_{\cC}(A_0)$. We note that $\wh{\otimes}_{A_0}$ gives a binary coproduct in $\Alg_{\cC}(A_0)$ by \cite{BK20} Proposition 2.16, and hence the non-derived multiplication $A_1 \wh{\otimes}_{A_0} A_1 \to A_1$ is an isomorphism in $\Mod_{\cC}(A_0)$ if and only if $\pi$ is an epimorphism in $\Alg_{\cC}(R)$. This fact clearly justifies the terminology ``homotopy epimorphism''.

\vspace{0.1in}
Since the simplest examples of epimorphisms in the category of $R$-algebras and $R$-algebra homomorphisms are given by quotients of localisations, which corresponds to pro-locally closed immersions of affine schemes over $\Spec(R)$, it is not difficult to imagine how a homotopy Zariski open immersion formally plays a role of an ``immersion''. We show the relation between the multiplications $A_1 \wh{\otimes}_{A_0} A_1 \to A_1$ and $A_1 \wh{\otimes}_{A_0}^{\bL} A_1 \to A_1$.

\begin{prp}
\label{homotopy epimorphism is epimorphism}
If $\pi$ is a homotopy epimorphism in $\Alg_{\cC}(R)$, then $\pi$ is an epimorphism in $\Alg_{\cC}(R)$.
\end{prp}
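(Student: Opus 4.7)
The plan is to pass from the derived isomorphism to an underived isomorphism by means of the canonical comparison morphism $\alpha \colon A_1 \wh{\otimes}_{A_0}^{\bL} A_1 \to A_1 \wh{\otimes}_{A_0} A_1$ from the derived tensor product to the underived one. As already observed in the excerpt, since $\wh{\otimes}_{A_0}$ is a binary coproduct in $\Alg_{\cC}(A_0)$, being an epimorphism in $\Alg_{\cC}(R)$ is equivalent to the underived multiplication $m \colon A_1 \wh{\otimes}_{A_0} A_1 \to A_1$ being an isomorphism in $\Mod_{\cC}(A_0)$; this is what I will prove. Concretely, representing $A_1 \wh{\otimes}_{A_0}^{\bL} A_1$ as $P \wh{\otimes}_{A_0} A_1$ for a functorial projective resolution $P \to A_1$, the comparison $\alpha$ is induced by the augmentation of $P$, and the given derived multiplication satisfies $m^{\bL} = m \circ \alpha$.

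The key step is to introduce, alongside the underived insertion morphisms $\iota_1, \iota_2 \colon A_1 \to A_1 \wh{\otimes}_{A_0} A_1$ given by $a \mapsto a \wh{\otimes} 1$ and $a \mapsto 1 \wh{\otimes} a$, their derived analogues $\iota_1^{\bL}, \iota_2^{\bL} \colon A_1 \to A_1 \wh{\otimes}_{A_0}^{\bL} A_1$, obtained by applying $A_1 \wh{\otimes}_{A_0}^{\bL} (\cdot)$ and $(\cdot) \wh{\otimes}_{A_0}^{\bL} A_1$ to $\pi \colon A_0 \to A_1$ and pre-composing with the derived unit isomorphism $A_1 \cong A_1 \wh{\otimes}_{A_0}^{\bL} A_0$. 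This derived unit isomorphism is available because $A_0$ is a projective object of $\Mod_{\cC}(A_0)$ by Proposition \ref{projectivity of R}, which also makes the natural map $A_1 \wh{\otimes}_{A_0}^{\bL} A_0 \to A_1 \wh{\otimes}_{A_0} A_0$ an isomorphism. The naturality of $\alpha$ then gives $\alpha \circ \iota_i^{\bL} = \iota_i$, and combined with the monoid axiom $m \circ \iota_i = \id_{A_1}$ yields $m^{\bL} \circ \iota_i^{\bL} = m \circ \alpha \circ \iota_i^{\bL} = m \circ \iota_i = \id_{A_1}$ in $\Derba_{\cC}(A_0)$.

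If $m^{\bL}$ is an isomorphism, then both $\iota_1^{\bL}$ and $\iota_2^{\bL}$ must equal $(m^{\bL})^{-1}$, so $\iota_1^{\bL} = \iota_2^{\bL}$ and hence $\iota_1 = \alpha \circ \iota_1^{\bL} = \alpha \circ \iota_2^{\bL} = \iota_2$ in $\Derba_{\cC}(A_0)$. The canonical embedding $\Mod_{\cC}(A_0) \hookrightarrow \Derba_{\cC}(A_0)$ (sending an object to the corresponding complex concentrated in degree $0$) is fully faithful, as a standard consequence of Schneiders' left heart theory in \cite{Sch99}, so the equality $\iota_1 = \iota_2$ already holds in $\Mod_{\cC}(A_0)$. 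From $a \wh{\otimes} 1 = 1 \wh{\otimes} a$ for all $a \in A_1$ together with the $A_1$-bimodule structure on $A_1 \wh{\otimes}_{A_0} A_1$, one gets $a \wh{\otimes} b = ab \wh{\otimes} 1 = \iota_1(ab)$ for every simple tensor, which extends by continuity to show that $\iota_1$ is a two-sided inverse of $m$, completing the proof.

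The main obstacle will be setting up the derived lifts $\iota_i^{\bL}$ so that they simultaneously satisfy $m^{\bL} \circ \iota_i^{\bL} = \id_{A_1}$ and $\alpha \circ \iota_i^{\bL} = \iota_i$; this rests on the compatibility of the derived and underived unit isomorphisms, which is ensured by the projectivity of the unit $A_0$ as an $A_0$-module.
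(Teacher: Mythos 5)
Your argument is correct, but it takes a genuinely different route from the paper's. The paper works directly at the level of complexes: writing $(P_n)_{n}$ for the functorial projective resolution of $A_1$ over $A_0$, the strict exactness of the cone of (a representative of) the derived multiplication exhibits $A_1$ as a cokernel of $A_1 \wh{\otimes}_{A_0} P_1 \to A_1 \wh{\otimes}_{A_0} P_0$, while the right strong exactness of $A_1 \wh{\otimes}_{A_0} (\cdot)$ exhibits $A_1 \wh{\otimes}_{A_0} A_1$ as a cokernel of the same morphism; the multiplication $\mu$ is then an isomorphism by the universality of the cokernel. In effect the paper observes that the underived tensor product is the degree-zero cokernel of the derived one and that a quasi-isomorphism of complexes concentrated in non-positive degrees preserves that cokernel. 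You instead run the classical ``two insertions'' characterisation of epimorphisms through the derived category: from $m^{\bL} \circ \iota_i^{\bL} = \id_{A_1}$ and the invertibility of $m^{\bL}$ you get $\iota_1^{\bL} = \iota_2^{\bL}$, descend to $\iota_1 = \iota_2$ via the comparison morphism and the full faithfulness of the canonical embedding $\Mod_{\cC}(A_0) \hookrightarrow \Derba_{\cC}(A_0)$ (\cite{Sch99} Corollary 1.2.28, which the paper also invokes later, in the proof of Theorem \ref{derived effective descent}), and conclude from $a \wh{\otimes} 1 = 1 \wh{\otimes} a$ by density of the simple tensors. Your route needs two inputs the paper avoids: the compatibility of the derived and underived unit isomorphisms and the naturality of the comparison morphism (both of which do hold, since the regular $A_0$-module is projective by Proposition \ref{projectivity of R}, hence strongly flat, and the derived tensor product is defined via a functorial resolution), and the full faithfulness of the embedding, which rests on Schneiders' left-heart machinery. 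In exchange your argument exhibits the inverse of $\mu$ explicitly as the insertion $\iota_1$ and makes the parallel with the purely algebraic statement about ring epimorphisms transparent. Both proofs are sound.
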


\begin{proof}
We put $P_{\cC}^{A_0}(A_1) = (P_n)_{n \in \Z}$, and by $\rho \colon P_{\cC}^{A_0}(A_1) \to A_1$ the canonical chain homomorphism of chain complexes of $\Mod_{\cC}(A_0)$. By Proposition \ref{derived symmetric monoidal structure} (ii), the natural morphism $\iota \colon \Tot(P_{\cC}^{A_0}(A_1) \wh{\otimes}_{A_0} P_{\cC}^{A_0}(A_1)) \to A_1 \wh{\otimes}_{A_0} P_{\cC}^{A_0}(A_1)$ in $\Chba_{\cC}(A_0)$ is a quasi-isomorphism. We denote by $\mu$ the multiplication $A_1 \wh{\otimes}_{A_0} A_1 \to A_1$, and by $\mu^{\bL}$ the composite $\mu \circ (\id_{A_1} \wh{\otimes}_{A_0} \rho) \circ \iota$. By definition, $\mu^{\bL}$ is a morphism in $\Chba_{\cC}(A_0)$ representing the multiplication $A_1 \wh{\otimes}_{A_0}^{\bL} A_1 \to A_1$. Since $\mu^{\bL}$ and $\iota$ represent isomorphisms in $\Derba_{\cC}(A_1)$, $\mu \circ (\id_{A_1} \wh{\otimes}_{A_0} \rho)$ represents an isomorphism in $\Derba_{\cC}(A_1)$. This implies that $\mu \circ (\id_{A_1} \wh{\otimes}_{A_0} \rho)$ is a quasi-isomorphism in $\Chba_{\cC}(R)$ by Corollary \ref{derived isom is qis} (i), and hence its mapping cone
\begin{eqnarray*}
\cdots \to A_1 \wh{\otimes}_{A_0} P_1 \to A_1 \wh{\otimes}_{A_0} P_0 \to A_1 \to 0 \to \cdots
\end{eqnarray*}
is strictly exact. On the other hand, the segment
\begin{eqnarray*}
A_1 \wh{\otimes}_{A_0} P_1 \to A_1 \wh{\otimes}_{A_0} P_0 \to A_1 \wh{\otimes}_{A_0} A_1 \to 0
\end{eqnarray*}
of the mapping cone of $\id_{A_1} \wh{\otimes}_{A_0} \rho$ is strictly coexact by the right strong exactness of $A_1 \wh{\otimes}_{A_0} (\cdot)$ and the strict coexactness of $P_1 \to P_0 \to A_1 \to 0$. In particular, $\mu$ is a morphism in $\Mod_{\cC}(A_0)$ under $A_1 \wh{\otimes}_{A_0} P_0$ between two cokernels of $A_1 \wh{\otimes}_{A_0} P_1 \to A_1 \wh{\otimes}_{A_0} P_0$, and hence is an isomorphism in $\Mod_{\cC}(A_0)$ by the universality of the cokernel. This implies that $\pi$ is an epimorphism in $\Alg_{\cC}(R)$.
\end{proof} 

Proposition \ref{homotopy epimorphism is epimorphism} implies that the notion of homotopy epimorphism is analogous to that of weak homological epimorphism for \Frechet $\C$-algebras introduced in \cite{AP20} Definition 3.14. The fact that a morphism between Stein spaces over $\C$ (resp.\ $C^{\infty}$-manifolds over $\R$) is an open immersion if and only if it induces a weak homological epimorphism between the associated \Frechet $\C$-algebras by \cite{AP20} Theorem 4.2 (resp.\ \cite{AP20} Theorem 5.3) is another evidence that a homotopy Zariski open immersion is a reasonable counterpart of an open immersion for the smooth setting.

\begin{prp}
\label{flat epimorphism is homotopy epimorphism}
Suppose that the natural morphism $A_1 \wh{\otimes}_{A_0}^{\bL} A_1 \to A_1 \wh{\otimes}_{A_0} A_1$ in $\Derba_{\cC}(A_0)$ is an isomorphism. Then $\pi$ is a homotopy epimorphism in $\Alg_{\cC}(R)$ if and only if $\pi$ is an epimorphism in $\Alg_{\cC}(R)$.
\end{prp}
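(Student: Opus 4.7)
The forward direction is exactly the content of Proposition \ref{homotopy epimorphism is epimorphism} and requires no additional hypothesis, so the plan is to focus on the converse. Assume that $\pi$ is an epimorphism in $\Alg_{\cC}(R)$ and that the natural comparison morphism
\begin{eqnarray*}
\nu \colon A_1 \wh{\otimes}_{A_0}^{\bL} A_1 \to A_1 \wh{\otimes}_{A_0} A_1
\end{eqnarray*}
in $\Derba_{\cC}(A_0)$ is an isomorphism. I want to show that the derived multiplication $\mu^{\bL} \colon A_1 \wh{\otimes}_{A_0}^{\bL} A_1 \to A_1$ is an isomorphism in $\Derba_{\cC}(A_0)$.

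The first step is to observe that, because $\wh{\otimes}_{A_0}$ provides a binary coproduct in $\Alg_{\cC}(A_0)$ (cited from \cite{BK20} Proposition 2.16 in the discussion preceding the statement), $\pi$ being an epimorphism in $\Alg_{\cC}(R)$ is equivalent to the ordinary multiplication $\mu \colon A_1 \wh{\otimes}_{A_0} A_1 \to A_1$ being an isomorphism in $\Mod_{\cC}(A_0)$. Viewed in $\Derba_{\cC}(A_0)$, $\mu$ is then an isomorphism there as well.

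The second step is the factorisation $\mu^{\bL} = \mu \circ \nu$ in $\Derba_{\cC}(A_0)$. This is essentially the construction made in the proof of Proposition \ref{homotopy epimorphism is epimorphism}: choosing a projective resolution $\rho \colon P_{\cC}^{A_0}(A_1) \to A_1$, a representative of $\mu^{\bL}$ is given by $\mu \circ (\id_{A_1} \wh{\otimes}_{A_0} \rho) \circ \iota$, where $\iota$ is the quasi-isomorphism of Proposition \ref{derived symmetric monoidal structure}~(iii). The composite $(\id_{A_1} \wh{\otimes}_{A_0} \rho) \circ \iota$ represents precisely the natural morphism $\nu$, so at the level of $\Derba_{\cC}(A_0)$ we get $\mu^{\bL} = \mu \circ \nu$. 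Since both $\mu$ and $\nu$ are isomorphisms in $\Derba_{\cC}(A_0)$, so is $\mu^{\bL}$, which is exactly the statement that $\pi$ is a homotopy epimorphism.

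The main potential obstacle is a clean justification of the factorisation $\mu^{\bL} = \mu \circ \nu$; this is really just unwinding the definitions of the derived tensor product and of the natural comparison morphism, and is already implicit in the construction used in the proof of Proposition \ref{homotopy epimorphism is epimorphism}, so no additional homological machinery beyond Proposition \ref{derived symmetric monoidal structure} and Corollary \ref{derived isom is qis} is required.
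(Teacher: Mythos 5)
Your proposal is correct and follows essentially the same route as the paper: both arguments rest on the factorisation of the derived multiplication as the composite of the comparison morphism $A_1 \wh{\otimes}_{A_0}^{\bL} A_1 \to A_1 \wh{\otimes}_{A_0} A_1$ with the non-derived multiplication, together with the fact that the latter is an isomorphism precisely when $\pi$ is an epimorphism. Your write-up is in fact somewhat more explicit than the paper's one-sentence proof about how the representative $\mu \circ (\id_{A_1} \wh{\otimes}_{A_0} \rho) \circ \iota$ realises this factorisation.
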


\begin{proof}
The assertion follows from Proposition \ref{homotopy epimorphism is epimorphism}, because the morphism in $\Derba_{\cC}(A_0)$ in the assertion is represented by the morphism in $\Chba_{\cC}(A_0)$ given as the composite the natural morphism $\Tot(P_{\cC}^{A_0}(A_1) \wh{\otimes}_{A_0} P_{\cC}^{A_0}(A_1)) \to A_1 \wh{\otimes}_{A_0} A_1$ in $\Chba_{\cC}(R)$ and the multiplication $A_1 \wh{\otimes}_{A_0} A_1 \to A_1$ by the explicit formulation of $\wh{\otimes}^{\bL}$.
\end{proof}

It is quite remarkable that when $R$ is $\C$ equipped with $\v{\cdot}_{\infty}$, then the category $\CAlg$ of $C^*$-algebras and $*$-homomorphisms is a full subcategory of $\Alg_{\cC}(R)$. Indeed, every $C^*$-algebra is naturally identified with $\rC(X,\C)$ for some compact Hausdorff topological space $X$ by \GelfandNaimark theorem, and every $\C$-algebra homomorphism between $C^*$-algebra is a $*$-homomorphism because every character on a $C^*$-algebra is continuous and the involution on $\rC(X,\C)$ coincides with the pointwise complex conjugate on $X$. Therefore, our results about $\Alg_{\cC}(R)$ are applicable to $\CAlg$, and even to its full subcategory of von Neumann algebras. Although the category of $C^*$-algebras is not stable by the tensor product $\wh{\otimes}_R$, this will not prevent us to compute it in the category $\Alg_{\cC}(R)$ to deduce results for $C^*$-algebras by the following:

\begin{prp}
\label{epimorphism of C*-algebra}
Suppose that $R$ is $\C$ equipped with $\v{\cdot}_{\infty}$. Then a $\C$-algebra homomorphism $\pi$ between $C^*$-algebras is an epimorphism in $\CAlg$ if and only if $\pi$ is an epimorphism in $\Alg_{\cC}(R)$.
\end{prp}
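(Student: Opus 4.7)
The plan is to split the biconditional into its two directions, leveraging the observation from the preceding paragraph that $\CAlg$ is a full subcategory of $\Alg_{\cC}(R)$ once one identifies every $C^*$-algebra with $\rC(X,\C)$ via \GelfandNaimark duality. One direction is essentially formal: if $\pi$ is an epimorphism in the ambient category $\Alg_{\cC}(R)$, then any two parallel $*$-homomorphisms $f, g \colon A_1 \to C$ in $\CAlg$ satisfying $f \circ \pi = g \circ \pi$ are, by fullness, parallel morphisms in $\Alg_{\cC}(R)$ as well, and must therefore coincide.

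For the converse, I would first reduce the statement that $\pi$ is an epimorphism in $\Alg_{\cC}(R)$ to the claim that $\pi$ has dense image in $A_1$. Indeed, if $f, g \colon A_1 \to B$ are morphisms in $\Alg_{\cC}(R)$ with $f \circ \pi = g \circ \pi$, then $f$ and $g$ are bounded and hence continuous, so their agreement on the dense subset $\pi(A_0) \subset A_1$ propagates to all of $A_1$. Thus it suffices to establish that an epimorphism in $\CAlg$ has dense image.

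To verify the dense-image property, I would invoke \GelfandNaimark duality to write $A_0 = \rC(Y, \C)$ and $A_1 = \rC(X, \C)$ for compact Hausdorff spaces $X$ and $Y$, so that $\pi$ corresponds to a continuous map $\phi \colon X \to Y$. The contravariant equivalence translates the statement that $\pi$ is an epimorphism in $\CAlg$ into the statement that $\phi$ is a monomorphism in compact Hausdorff spaces, i.e.\ that $\phi$ is injective. Since an injective continuous map from a compact space into a Hausdorff space is automatically a closed embedding, the Tietze extension theorem implies that the induced restriction map $\pi \colon \rC(Y, \C) \to \rC(X, \C)$ is surjective, and a fortiori has dense image.

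The main subtlety to flag is conceptual rather than technical: the categorical distinction between $\CAlg$ and $\Alg_{\cC}(R)$ is non-trivial because the latter admits arbitrary commutative Banach $\C$-algebras as targets, not merely $C^*$-algebras. What makes the argument go through cleanly is that morphisms in $\Alg_{\cC}(R)$ are required to be bounded, so continuity alone suffices to extend agreement from a dense subset, with no further structural hypothesis needed on the target $B$.
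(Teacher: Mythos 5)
Your proof is correct and follows essentially the same route as the paper: the forward direction via the subcategory inclusion $\CAlg \subset \Alg_{\cC}(R)$, and the converse via Gel'fand--Naimark duality, the fact that monomorphisms in $\CH$ are injective, and the Tietze extension theorem to conclude that $\pi$ is surjective. The only cosmetic difference is your intermediate reduction to dense image (using boundedness of morphisms in $\Alg_{\cC}(R)$), which is sound but unnecessary since you end up establishing surjectivity outright.
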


\begin{proof}
By the argument above, $\pi$ is a morphism in $\CAlg$. Since $\CAlg$ is a full subcategory of $\Alg_{\cC}(R)$, $\pi$ is an epimorphism in $\CAlg$ if $\pi$ is an epimorphism in $\Alg_{\cC}(R)$. Suppose that $\pi$ is an epimorphism in $\CAlg$. By \GelfandNaimark theorem, the continuous map $\Phi$ between the associated spectra induced by $\pi$ is a monomorphism in the category $\CH$ of compact Hausdorff topological spaces and continuous maps. This implies that $\Phi$ is injective by the fact that the forgetful functor $\CH \to \Set$ is representable by a singleton, and hence $\pi$ is surjective by \GelfandNaimark theorem and Tietze extension theorem. Thus $\pi$ is an epimorphism in $\Alg_{\cC}(R)$.
\end{proof}

We use the notion of homotopy epimorphism to define the notion of cover for an object of $\Alg_{\cC}(R)$. In this context, objects of $\Alg_{\cC}(R)$ are identified with the objects of the dual category and hence formally considered as their own spectra. For a set $S$ and an $n \in \N$, we denote by $[S]^n \subset S^n$ the subset of $n$-tuples of distinct elements of $S$. Let $A \in \Alg_{\cC}(R)$. A {\it derived cover} of $A$ in $\cC$ is a subset $S \subset \Alg_{\cC}(A)$ satisfying the following:

\begin{itemize}
\item[(i)] For any $B \in S$, the structure morphism $A \to B$ is a homotopy epimorphism in $\Alg_{\cC}(R)$.
\item[(ii)] There exists a finite subset $S_0 \subset S$ satisfying derived Tate's acyclicity (cf.\ \cite{BK20} Theorem 2.15), i.e.\ the total complex  of the derived Tate--\Cech complex
\begin{eqnarray*}
0 \to A \to \prod_{B \in S_0} B \to \prod_{(B_0,B_1) \in [S_0]^2} B_0 \wh{\otimes}_A^{\bL} B_1 \to \cdots
\end{eqnarray*}
is strictly exact in $\Mod_{\cC}(R)$.
\end{itemize}
This notion of derived cover is very general and applies to any choice of $R$. For example, when $R = (\Z, \v{\cdot}_{\infty})$ then $\Alg_{\cC}(R)$ is the category of Banach rings and hence we obtain a definition of derived cover for any Banach ring. Moreover, the derived Tate--\Cech complex is a chain complex of $\Mod_{\cC}(A)$. Therefore its strict exactness depends only on $A$ and $S$, and does not depend on the structure map $R \to A$ nor on the (strongly exact) forgetful functor $\Mod_{\cC}(A) \to \Mod_{\cC}(R)$.

\vspace{0.1in}
We note that this notion of cover does not define a Grothendieck topology on $\Alg_{\cC}(R)$ because the objects $B_0 \wh{\otimes}_A^{\bL} B_1$ do not belong to the essential image of $\Alg_{\cC}(R)$ in general. This topology is well-defined only on the homotopy category of the opposite category of simplicial Banach $R$-algebras (cf.\ \cite{BBK19} Proposition 5.8). Nevertheless, it has been checked that this notion defines a topology when it is restricted to suitable subcategories of $\Alg_{\cC}(R)$, when $R$ is a non-Archimedean valued field (cf.\ \cite{BK17} Theorem 5.39). In this paper, we will prove new results of this kind for the algebras of continuous functions on compact Hausdorff spaces.

\vspace{0.1in}
Similarly, we introduce a terminology for the classical (non-derived) setting in order to contrast it with a derived cover. A {\it non-derived cover} of $A$ in $\cC$ is a subset $S \subset \Alg_{\cC}(A)$ satisfying the following:

\begin{itemize}
\item[(i)] For any $B \in S$, the structure morphism $A \to B$ is an epimorphism in $\Alg_{\cC}(R)$.
\item[(ii)] There exists a finite subset $S_0 \subset S$ satisfying Tate's acyclicity, i.e.\ the (non-derived) Tate--\Cech complex
\begin{eqnarray*}
0 \to A \to \prod_{B \in S_0} B \to \prod_{(B_0,B_1) \in [S_0]^2} B_0 \wh{\otimes}_A B_1 \to \cdots
\end{eqnarray*}
is strictly exact in $\Mod_{\cC}(R)$.
\end{itemize}
This notion of cover never defines a Grothendieck topology on $\Alg_{\cC}(R)^{\op}$ and there is no general theory associated to this notion of cover, in contrast with the derived covers that define a Grothendieck topology on the homotopy category of the opposite category of simplicial Banach algebras. We will show that for algebras of continuous functions on compact Hausdorff spaces, the notions of derived and non-derived covers are closely related.

\begin{prp}
\label{relation between derived covers and non-derived covers}
Let $S$ be a subset of $\Alg_{\cC}(A)$ such that the natural morphism
\begin{eqnarray*}
B_0 \wh{\otimes}_A^{\bL} B_1 \to B_0 \wh{\otimes}_A B_1
\end{eqnarray*}
in $\Derba_{\cC}(A)$ is an isomorphism for any $(B_0,B_1) \in S^2$. Then $S$ is a derived cover of $A$ in $\cC$ if and only if $S$ is a non-derived cover of $A$ in $\cC$.
\end{prp}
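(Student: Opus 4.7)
The plan is to verify the equivalence of the two cover conditions (i) and (ii) separately under the given pairwise hypothesis.

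For condition (i), I would apply Proposition \ref{flat epimorphism is homotopy epimorphism} to each structure morphism $A \to B$ with $B \in S$. The current hypothesis, applied to the diagonal pair $(B, B) \in S^2$, furnishes exactly the hypothesis of Proposition \ref{flat epimorphism is homotopy epimorphism}, so that $A \to B$ is a homotopy epimorphism in $\Alg_{\cC}(R)$ if and only if it is an epimorphism. Thus condition (i) in the definition of derived cover coincides with condition (i) in the definition of non-derived cover.

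For condition (ii), the natural transformation $\wh{\otimes}_A^{\bL} \to \wh{\otimes}_A$ induces a chain morphism $\Phi$ in $\Chba_{\cC}(R)$ from the derived Tate--\Cech complex of a finite $S_0 \subset S$ to its non-derived counterpart. If $\Phi$ is a quasi-isomorphism, then by Corollary \ref{derived isom is qis}(iii) the total complexes are simultaneously strictly exact or not, which yields the equivalence of condition (ii). To prove $\Phi$ is a quasi-isomorphism I would invoke Proposition \ref{derived symmetric monoidal structure}(iv) together with the exactness of finite products in $\Mod_{\cC}(R)$, reducing the claim to showing that for every $n \geq 1$ and every $(B_0,\ldots,B_n) \in [S_0]^{n+1}$ the natural comparison
\[
B_0 \wh{\otimes}_A^{\bL} B_1 \wh{\otimes}_A^{\bL} \cdots \wh{\otimes}_A^{\bL} B_n \longrightarrow B_0 \wh{\otimes}_A B_1 \wh{\otimes}_A \cdots \wh{\otimes}_A B_n
\]
is an isomorphism in $\Derba_{\cC}(A)$.

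I would then prove this termwise statement by induction on $n$, the case $n=1$ being the hypothesis. For $n \geq 2$, I would use associativity of $\wh{\otimes}_A^{\bL}$ to write the left-hand side as $B_0 \wh{\otimes}_A^{\bL} (B_1 \wh{\otimes}_A^{\bL} \cdots \wh{\otimes}_A^{\bL} B_n)$, apply the inductive hypothesis to replace the inner derived iterated tensor product by the non-derived one in $\Derba_{\cC}(A)$ (permissible since isomorphisms in $\Derba_{\cC}(A)$ are preserved by $B_0 \wh{\otimes}_A^{\bL}(-)$), and then reduce to comparing $B_0 \wh{\otimes}_A^{\bL} C$ with $B_0 \wh{\otimes}_A C$ for $C = B_1 \wh{\otimes}_A \cdots \wh{\otimes}_A B_n$. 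The main obstacle is this last comparison, since $C$ need not lie in $S$. To overcome it, I would use that condition (i) makes each $A \to B_i$ a homotopy epimorphism, so $A \to C$ is an iterated pushout of homotopy epimorphisms and thus itself a homotopy epimorphism, and then rewrite $B_0 \wh{\otimes}_A^{\bL} C$ via the base-change isomorphism $B_0 \wh{\otimes}_A^{\bL} C \cong (B_1 \wh{\otimes}_A^{\bL} B_0) \wh{\otimes}_{B_1}^{\bL} C$ together with the hypothesis $B_1 \wh{\otimes}_A^{\bL} B_0 \cong B_1 \wh{\otimes}_A B_0$ to propagate the pairwise comparison from $S$ along the intermediate algebras, ultimately expressing everything in terms of the already-settled pairwise case.
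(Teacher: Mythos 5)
Your overall route is the same as the paper's, which disposes of the statement in two sentences: condition (i) of the two definitions is matched by Proposition \ref{flat epimorphism is homotopy epimorphism} applied to the diagonal pairs $(B,B) \in S^2$, and condition (ii) is matched by identifying the derived and non-derived Tate--\Cech complexes termwise and then invoking Proposition \ref{derived symmetric monoidal structure} (iv). Your treatment of condition (i) is exactly the paper's. For condition (ii) you have correctly put your finger on what the paper passes over with the words ``by the assumption'': the terms of the two complexes in \Cech degree $\geq 2$ involve $(n+1)$-fold tensor products with $n \geq 2$, while the stated hypothesis only controls pairs, so the termwise identification genuinely requires an argument.

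The problem is that your argument for this step does not close, and your last sentence half-admits it. After the inductive reduction you must show that $B_0 \wh{\otimes}_A^{\bL} C \to B_0 \wh{\otimes}_A C$ is an isomorphism for $C = B_1 \wh{\otimes}_A \cdots \wh{\otimes}_A B_n$; the base-change rewriting $B_0 \wh{\otimes}_A^{\bL} C \cong (B_0 \wh{\otimes}_A^{\bL} B_1) \wh{\otimes}_{B_1}^{\bL} C$ combined with the pairwise hypothesis only converts this into the assertion that $(B_0 \wh{\otimes}_A B_1) \wh{\otimes}_{B_1}^{\bL} C \to (B_0 \wh{\otimes}_A B_1) \wh{\otimes}_{B_1} C$ is an isomorphism, which is a derived-versus-underived comparison over the intermediate algebra $B_1$ and is not among the hypotheses; ``propagate the pairwise comparison along the intermediate algebras'' is a hope, not a proof. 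The difficulty is real: in the purely algebraic setting, $\Tor$-independence of the distinct pairs of a family of $A$-algebras does not imply $\Tor$-independence of triples (take $A = k[x,y]$ with the quotients by $(x)$, $(y)$, $(x-y)$: all higher $\Tor$'s between distinct pairs vanish, yet $\Tor_1^A(A/(x) \otimes_A A/(y), A/(x-y)) \cong \Tor_1^A(k, A/(x-y)) \neq 0$). That example violates the diagonal instances $B \wh{\otimes}_A^{\bL} B \cong B \wh{\otimes}_A B$ of the hypothesis, so any correct argument must use those diagonal instances in an essential way, and your sketch never actually does. The clean repair, consistent with every application of the proposition in the paper, is to note that each $B \in S$ is there strongly flat over $A$ (via Proposition \ref{projectivity of R} and \cite{BK20} Proposition 3.11, or via Lemma \ref{flatness of C(K,R)}), so that Proposition \ref{derived symmetric monoidal structure} (iii) yields all the higher comparison isomorphisms at once; alternatively one should strengthen the hypothesis to require the comparison morphism to be an isomorphism for all finite tuples from $S$, which is what the one-line proof in the paper implicitly uses.
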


\begin{proof}
Let $S_0$ be a finite subset of $S$. The derived Tate--\Cech complex of $A$ associated to $S_0$ is isomorphic to the (non-derived) Tate--\Cech complex $C$ of $A$ associated to $S_0$ in $\Chba(\Derba_{\cC}(A))$ by the assumption. Therefore the assertion follows from Proposition \ref{derived symmetric monoidal structure} (iv) and Proposition \ref{flat epimorphism is homotopy epimorphism}.
\end{proof}

By Proposition \ref{relation between derived covers and non-derived covers}, Tate's acyclicity yields various examples of derived covers. We recall several known results on Tate's acyclicity for the reader's convenience.
\begin{itemize}
\item[(i)] \cite{Tat71} Theorem 8.2 for an affinoid $k$-algebra for a complete valuation field $k$ with a non-trivial valuation.
\item[(ii)] \cite{Ber90} Proposition 2.2.5 for a $k$-affinoid algebra for a complete valuation field $k$.
\item[(iii)] \cite{Hub94} Theorem 2.2 for a strongly Noetherian Tate ring.
\item[(iv)] \cite{Sch12} Theorem 6.3 (iii) for a perfectoid affinoid $k$-algebra for a perfectoid field $k$.
\item[(v)] \cite{Mih16} Theorem 3.9 for the Banach $k$-algebra of bounded continuous functions $X \to k$ for a topological space $X$ and a local field $k$.
\item[(vi)] \cite{Mih16} Theorem 4.8 for a stably uniform Banach $k$-algebra for a complete valuation field $k$.
\item[(vii)] \cite{BV18} Theorem 7 for a stably uniform Tate affinoid ring
\end{itemize}

Besides these positive results, it is well-known that Tate's acyclicity does not always hold for covers of the spectra of Banach rings. We refer to \cite{Hub94} pp.\ 520--521, \cite{Mih16} Corollary 3.2, \cite{Mih16} Theorem 4.6, and \cite{BV18} Proposition 18 for examples of simple covers by basic localisations that do not satisfy Tate's acyclicity. Even when the classical form of Tate's acyclicity fails derived Tate's acyclicity still holds, as explained in \cite{Sch19} Proposition 13.16 and \cite{BK20} Theorem 4.15.

\vspace{0.1in}
The aim of this paper is to study the relations between homotopy epimorphisms $\rC(X,R) \to \rC(Y,R)$ and closed immersions $Y \to X$, and relations between the notion of a derived cover of $\rC(X,R)$ and that of a topological cover of $X$ by closed subsets, for compact Hausdorff topological spaces $X$ and $Y$. We show relations of the former type in Proposition \ref{clopen subset is homotopy Zariski immersion}, Theorem \ref{closed immersion with projective kernel}, and Theorem \ref{closed immersion with flat kernel}, and relations of the latter type in Corollary \ref{clopen cover is derived cover}, Theorem \ref{relation of cover by subsets}, and Theorem \ref{relation of cover}.

\section{Homotopy Zariski open immersions versus closed immersions}
\label{Homotopy Zariski open immersions versus closed immersions}

Let $\cC$ be either $\BAb$ or $\NBAb$, $R$ a monoid object of $\cC$, $X$ a compact Hausdorff topological space, and $K$ a closed subset of $X$. We denote by $\pi_{X,R,K}$ the restriction map $\rC(X,R) \to \rC(K,R)$, and by $I_{X,R,K} \subset \rC(X,R)$ the kernel of $\pi_{X,R,K}$, i.e.\ the closed ideal $\set{f \in \rC(X,R)}{\forall x \in K, f(x) = 0}$. We study criteria for the property that $\pi_{X,R,K}$ is a homotopy epimorphism in $\Alg_{\cC}(R)$.

\subsection{Trivial examples}
\label{Trivial examples}

First, we give a trivial example of a homotopy epimorphism in $\Alg_{\cC}(R)$.

\begin{prp}
\label{clopen subset is homotopy Zariski immersion}
For any idempotent $e \in R$, $e R$ is a closed ideal of $R$, $R/e R$ is a projective object of $\Mod_{\cC}(R)$, and the quotient map $R \to R/e R$ is a homotopy epimorphism in $\Alg_{\cC}(R)$.
\end{prp}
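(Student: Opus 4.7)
My plan is as follows. First I would show that $eR$ is closed and that $R$ decomposes as an internal direct sum in $\Mod_{\cC}(R)$. Since $e^2=e$, the element $1-e$ is an orthogonal idempotent with $e(1-e)=0$, so for any $r\in R$ the condition $(1-e)r=0$ is equivalent to $r=er\in eR$. Hence $eR$ is the kernel of the continuous $R$-linear endomorphism $r\mapsto(1-e)r$ of $R_1$, and so it is closed. Symmetrically, $(1-e)R$ is a closed ideal, and the bounded $R$-linear map $R\to eR\oplus(1-e)R$ given by $r\mapsto(er,(1-e)r)$, with bounded inverse $(a,b)\mapsto a+b$, exhibits $R\cong eR\oplus(1-e)R$ as an internal direct sum in $\Mod_{\cC}(R)$; in particular, $R/eR\cong(1-e)R$ in $\Mod_{\cC}(R)$.

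For the projectivity claim, $R_1$ is a projective object of $\Mod_{\cC}(R)$ by Proposition \ref{projectivity of R} applied to $r=1$, and projectivity passes to direct summands (surjections in $\Ab$ are preserved under retracts of hom functors), so $R/eR\cong(1-e)R$ is projective as well.

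For the homotopy epimorphism claim, the quotient $\pi\colon R\to R/eR$ is surjective and hence an epimorphism in $\Alg_{\cC}(R)$, so by Proposition \ref{flat epimorphism is homotopy epimorphism} it suffices to verify that the natural morphism $R/eR\wh{\otimes}_R^{\bL}R/eR\to R/eR\wh{\otimes}_R R/eR$ is an isomorphism in $\Derba_{\cC}(R)$. Since $R/eR$ is a direct summand of the monoidal unit $R_1$, the endofunctor $(\cdot)\wh{\otimes}_R R/eR$ of $\Mod_{\cC}(R)$ is a direct summand of the identity functor via the natural decomposition $M\wh{\otimes}_R R_1\cong(M\wh{\otimes}_R R/eR)\oplus(M\wh{\otimes}_R eR)$, and is therefore strongly exact; equivalently, $R/eR$ is strongly flat over $R$. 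Then Proposition \ref{derived symmetric monoidal structure}(iii), applied to $M=N=R/eR$ viewed as chain complexes concentrated in degree zero, supplies the required isomorphism and completes the argument.

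I do not expect any serious obstacle, in keeping with the title of the subsection: the essential content is simply that the algebraic splitting $R\cong eR\oplus(1-e)R$ is realised by bounded maps in $\Mod_{\cC}(R)$, so that both projectivity and strong flatness of the summand $R/eR$ follow by transport across this retract. No projective resolution of $R/eR$ has to be constructed explicitly, because Proposition \ref{derived symmetric monoidal structure}(iii) handles the comparison between $\wh{\otimes}_R^{\bL}$ and $\wh{\otimes}_R$ automatically once flatness is known.
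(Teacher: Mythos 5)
Your proposal is correct and follows essentially the same route as the paper: the paper isolates your first step as a separate lemma (the splitting $R \cong eR \oplus (1-e)R$ by the bounded maps $r \mapsto (er,(1-e)r)$ and $(a,b) \mapsto a+b$), deduces projectivity of $R/eR$ from Proposition \ref{projectivity of R}, and then concludes via Proposition \ref{derived symmetric monoidal structure} (iii) and Proposition \ref{flat epimorphism is homotopy epimorphism} exactly as you do. The only (harmless) divergence is in how strong flatness of $R/eR$ is justified: the paper cites the general implication ``projective implies strongly flat'' from \cite{BK20}, whereas you observe directly that $(\cdot)\wh{\otimes}_R R/eR$ is a retract of the identity functor; both arguments are valid.
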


In order to show Proposition \ref{clopen subset is homotopy Zariski immersion}, we prove the following lemma.

\begin{lmm}
\label{idempotent gives direct summand}
Let $M \in \Mod_{\cC}(R)$. For any idempotent $e \in R$, $e M$ and $(1-e)M$ are closed, the scalar multiplication $M \to M$ by $1-e$ induces an isomorphism $M/e M \to (1-e) M$ in $\Mod_{\cC}(R)$, and $M/e M$ is a direct summand of $M$.
\end{lmm}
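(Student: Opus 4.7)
The plan is to exploit the fact that $e$ and $1-e$ form a complete system of orthogonal idempotents, so that the scalar multiplications $\mu_e, \mu_{1-e} \colon M \to M$ given by $m \mapsto em$ and $m \mapsto (1-e)m$ are complementary bounded $R$-linear projections. Boundedness of each follows from the Banach module axiom $\n{am}_M \leq C \n{a}_R \n{m}_M$, and a direct calculation using $e^2 = e$ gives $\mu_e \circ \mu_e = \mu_e$, $\mu_{1-e} \circ \mu_{1-e} = \mu_{1-e}$, $\mu_e \circ \mu_{1-e} = 0 = \mu_{1-e} \circ \mu_e$, and $\mu_e + \mu_{1-e} = \id_M$.

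First, I would prove that $eM$ and $(1-e)M$ are closed by identifying them as kernels of the complementary projections. The inclusion $eM \subseteq \ker(\mu_{1-e})$ is immediate from $(1-e)e = 0$, and conversely any $m$ with $(1-e)m = 0$ satisfies $m = em + (1-e)m = em \in eM$, so $eM = \ker(\mu_{1-e})$; symmetrically $(1-e)M = \ker(\mu_e)$, and both are closed because $\mu_e, \mu_{1-e}$ are continuous. Secondly, since $\mu_{1-e}$ annihilates $eM$, the universal property of the cokernel produces an induced morphism $\overline{\mu}_{1-e} \colon M/eM \to (1-e)M$ in $\Mod_{\cC}(R)$, and its inverse is provided by the bounded $R$-linear composition $(1-e)M \hookrightarrow M \twoheadrightarrow M/eM$: on representatives one has $[m] \mapsto (1-e)m \mapsto [m - em] = [m]$, and $y \mapsto [y] \mapsto (1-e)y = y$ using $(1-e)^2 = 1-e$.

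Finally, the direct-summand statement follows by observing that the formula $m = em + (1-e)m$, together with $eM \cap (1-e)M = 0$ (if $em = (1-e)n$ then multiplying by $e$ and by $1-e$ forces both sides to vanish), realises $M$ as the categorical direct sum $eM \oplus (1-e)M$ in $\Mod_{\cC}(R)$: the map $m \mapsto (em, (1-e)m)$ and its inverse $(x,y) \mapsto x+y$ are both bounded $R$-linear, and combined with the isomorphism of the previous step this exhibits $M/eM$ as the direct summand $(1-e)M$. The main thing to watch is that $\Mod_{\cC}(R)$ is only quasi-Abelian, so one cannot invoke an open mapping theorem to conclude that algebraic inverses are continuous; however, every isomorphism we construct comes equipped with an explicit bounded inverse, so no such appeal is needed, and the proof is essentially formal once the bounded idempotent endomorphism $\mu_e$ is identified.
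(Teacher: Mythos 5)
Your proposal is correct and follows essentially the same route as the paper's proof: identifying $eM$ and $(1-e)M$ as kernels of the continuous complementary multiplications, inducing $M/eM \to (1-e)M$ by the universal property of the cokernel with the explicit bounded inverse $(1-e)M \hookrightarrow M \twoheadrightarrow M/eM$, and exhibiting the direct sum via $m \mapsto (em,(1-e)m)$. Your closing remark about not needing an open mapping theorem because all inverses are explicitly bounded is exactly the right point of care in the quasi-Abelian setting.
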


\begin{proof}
Since the scalar multiplication $R \times M \to M$ is continuous, the kernel $e M$ (resp.\ $(1-e) M$) of the multiplication $M \to M$ by $1-e$ (resp.\ $e$) is closed. By the universality of the cokernel in $\Mod_{\cC}(R)$, the multiplication $M \to M$ by $1-e$ induces a bounded $R$-linear homomorphism $\mu \colon M/e M \to (1-e) M$. Its inverse is given by the composite of the inclusion $(1-e) M \hookrightarrow M$ and the canonical projection $M \twoheadrightarrow M/e M$, and hence is bounded. Therefore $\mu$ is an isomorphism in $\Mod_{\cC}(R)$. The addition $e M \oplus (1-e) M \to M$ is an isomorphism in $\Mod_{\cC}(R)$, because it admits the inverse given by the bounded $R$-linear homomorphism $M \to e M \oplus (1-e) M, \ m \mapsto (em,(1-e)m)$. Therefore $M/e M \cong (1-e) M$ is a direct summand of $M$.
\end{proof}

\begin{proof}[Proof of Proposition \ref{clopen subset is homotopy Zariski immersion}]
By Lemma \ref{idempotent gives direct summand}, $e R$ is closed and $R/e R$ is a direct summand of the regular $R$-module object $R_1$. This implies that $R/e R$ is a projective object of $\Mod_{\cC}(R)$ by Proposition \ref{projectivity of R}, and hence the natural morphism $(R/e R) \wh{\otimes}_R^{\bL} (R/e R) \to (R/e R) \wh{\otimes}_R (R/e R)$ in $\Derba_{\cC}(R)$ is an isomorphism, because it is represented by the natural morphism
\begin{eqnarray*}
P_{\cC}^{R}(R/e R) \wh{\otimes}_R P_{\cC}^{R}(R/e R) \to (R/e R) \wh{\otimes}_R (R/e R)
\end{eqnarray*}
in $\Chba_{\cC}(R)$, which is a quasi-isomorphism by Proposition \ref{derived symmetric monoidal structure} (iii) and \cite{BK20} Proposition 3.11. Since the canonical projection $R \twoheadrightarrow R/e R$ is surjective, the assertion follows from Proposition \ref{flat epimorphism is homotopy epimorphism}.
\end{proof}

Proposition \ref{clopen subset is homotopy Zariski immersion} informally means that every clopen subset of the spectrum of $R$ gives a homotopy Zariski open immersion with respect to any ``appropriate'' formulation of the spectrum, because idempotents precisely corresponds to a clopen subset in such a formulation. Indeed, Berkovich's spectrum satisfies this property by Shilov idempotent theorem (cf.\ \cite{Ber90} 7.4.1 Theorem). Similarly, a disjoint clopen cover of the spectrum of $R$ gives a derived and non-derived cover in the following sense:

\begin{crl}
\label{clopen cover is derived cover}
For any finite orthogonal system $E \subset R$ of idempotents with $\sum_{e \in E} e = 1$, the set $\set{R/e R}{e \in E}$ is a derived and non-derived cover of $R$ in $\cC$.
\end{crl}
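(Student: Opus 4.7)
The plan is to apply Proposition~\ref{clopen subset is homotopy Zariski immersion} to each $e \in E$, reduce derived Tate's acyclicity to the non-derived form via Proposition~\ref{relation between derived covers and non-derived covers}, and then verify the latter by decomposing the Tate--\Cech complex into a direct sum of contractible augmented simplicial chain complexes.

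First, Proposition~\ref{clopen subset is homotopy Zariski immersion} applied to each $e \in E$ yields that $R \to R/eR$ is a homotopy epimorphism in $\Alg_{\cC}(R)$ (taking care of condition (i) for the derived notion of cover, and hence for the non-derived one by Proposition~\ref{homotopy epimorphism is epimorphism}) and that $R/eR$ is a projective object of $\Mod_{\cC}(R)$. Following the argument in the proof of Proposition~\ref{clopen subset is homotopy Zariski immersion}, the projectivity of the factors gives that the natural morphism $(R/eR) \wh{\otimes}_R^{\bL} (R/e'R) \to (R/eR) \wh{\otimes}_R (R/e'R)$ is an isomorphism in $\Derba_{\cC}(R)$ for every $(e, e') \in E^2$; Proposition~\ref{relation between derived covers and non-derived covers} then reduces the task to verifying non-derived Tate's acyclicity.

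Second, I iterate Lemma~\ref{idempotent gives direct summand} along the orthogonal decomposition $1 = \sum_{e \in E} e$ to obtain $R \cong \bigoplus_{e \in E} eR$ in $\Mod_{\cC}(R)$, whence $R/eR \cong (1-e) R \cong \bigoplus_{e' \in E \setminus \{e\}} e' R$. Using the orthogonality of $E$ together with the identity $fR \wh{\otimes}_R gR \cong fgR$ for idempotents $f, g$, I obtain by induction that for any distinct $e_0, \ldots, e_p \in E$,
\begin{eqnarray*}
(R/e_0 R) \wh{\otimes}_R \cdots \wh{\otimes}_R (R/e_p R) \cong (1 - e_0 - \cdots - e_p) R \cong \bigoplus_{e \in E \setminus \{e_0, \ldots, e_p\}} eR.
\end{eqnarray*}
Regrouping the Tate--\Cech complex by the index $e^\sharp \in E$ labelling each direct summand decomposes it as a finite direct sum, indexed by $e^\sharp \in E$, of subcomplexes of the form
\begin{eqnarray*}
0 \to e^\sharp R \to \prod_{e \in E \setminus \{e^\sharp\}} e^\sharp R \to \prod_{(e_0, e_1) \in [E \setminus \{e^\sharp\}]^2} e^\sharp R \to \cdots,
\end{eqnarray*}
whose differentials are signed sums of identity maps between copies of $e^\sharp R$.

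Finally, each such summand is $e^\sharp R$ tensored with the augmented ordered \Cech complex of the $(|E|-1)$-element set $E \setminus \{e^\sharp\}$, which coincides with the augmented simplicial chain complex of a simplex of dimension $|E|-2$. This simplex is strictly contractible via the standard coning homotopy on any chosen base vertex $e^\flat \in E \setminus \{e^\sharp\}$. I expect the main technical point to be upgrading the algebraic contractibility to \emph{strict} exactness in the quasi-Abelian category $\Mod_{\cC}(R)$; this is resolved by the explicit nature of the contracting homotopy, which is a bounded finite sum of $\pm\id$-maps between direct summands of $R$ and hence lifts to a genuine null-homotopy in $\Mod_{\cC}(R)$. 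Summing over $e^\sharp \in E$ then delivers strict exactness of the entire Tate--\Cech complex, establishing both the derived and the non-derived cover properties (the degenerate cases $|E| \leq 1$ or $0 \in E$ are handled by direct inspection).
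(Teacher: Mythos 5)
Your argument is correct and follows essentially the same route as the paper's one-line proof, which likewise combines Proposition \ref{clopen subset is homotopy Zariski immersion} (for the homotopy-epimorphism condition and the projectivity that makes Proposition \ref{relation between derived covers and non-derived covers} applicable) with a repeated application of Lemma \ref{idempotent gives direct summand}; you have merely made explicit the splitting of the Tate--\Cech complex into contractible summands indexed by $e^{\sharp} \in E$ that the paper leaves implicit. One small caveat, which concerns the statement itself rather than your argument: for $E = \ens{1}$ and $R \neq \ens{0}$ the complex degenerates to $0 \to R \to 0$, which is not strictly exact, so the case $\v{E} = 1$ is not in fact ``handled by direct inspection'' but is a genuine edge case excluded only when $\v{E} \neq 1$ or $R = \ens{0}$.
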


\begin{proof}
The assertion immediately follows from Proposition \ref{relation between derived covers and non-derived covers} and Proposition \ref{clopen subset is homotopy Zariski immersion} by a repeated application of Lemma \ref{idempotent gives direct summand}.
\end{proof}

\subsection{Projectivity criterion}
\label{Projectivity criterion}

Secondly, we give a sufficient condition for $K \subset X$ to be such that $I_{X,R,K}$ is a projective object of $\Mod_{\cC}(\rC(X,R))$.  We also show how the projectivity of $I_{X,R,K}$ implies that $\pi_{X,R,K}$ is a homotopy epimorphism. A {\it bounded factorisation system of $R$} is a tuple $(V,C,D_0,D_1)$ of a neighbourhood $V \subset R$ of $0 \in R$, a constant $C \in [0,\infty)$, and continuous maps $D_0,D_1 \colon V \to R$ satisfying $a = D_0(a)D_1(a)$ and $\max \ens{\n{D_0(a)}_R,\n{D_1(a)}_R} \leq C \n{a}_R^{\frac{1}{2}}$ for any $a \in V$.

\begin{exm}
\label{bounded factrisation system}
\begin{itemize}
\item[(i)] Suppose that $0 \in R$ is isolated. Put $V \coloneqq \ens{0} \subset R$ and $C \coloneqq 1 \in [0,\infty)$. We define $D_0$ and $D_1$ as the inclusion $V \hookrightarrow R$. Then $(V,C,D_0,D_1)$ forms a bounded factorisation system. In particular, $\Z$ equipped with $\v{\cdot}_{\infty}$ and any ring equipped with the trivial norm have a bounded factorisation system.
\item[(ii)] Suppose that $R$ is $\R$ or $\C$ equipped with $\v{\cdot}_{\infty}$. Put $V \coloneqq R$ and $C \coloneqq 1 \in [0,\infty)$. We define $D_0$ as the map assigning $\v{x}_{\infty}^{\frac{1}{2}} \in [0,\infty) \subset R$ to each $x \in V$, and $D_1$ as the map assigning $\v{x}_{\infty}^{- \frac{1}{2}} x \in R$ to each $x \in V \setminus \ens{0}$ and $0 \in R$ to $0 \in V$. Then $(V,C,D_0,D_1)$ forms a bounded factorisation system of $R$.
\item[(iii)] Suppose that $R$ is a complete valuation field with non-trivial valuation. Take a $p \in R$ with $0 < \n{p}_R < 1$. Put $V \coloneqq R$ and $C \coloneqq \n{p}_R^{-1} \in [0,\infty)$. For each $x \in R \setminus \ens{0}$, we denote by $n_x \in \Z$ the greatest $n \in \Z$ satisfying $\n{x}_R \leq \n{p}_R^{2n}$. We define $D_0$ as the map assigning $p^{n_x} \in R$ to each $x \in V \setminus \ens{0}$ and $0 \in R$ to $0 \in V$, and $D_1$ as the map assigning $p^{-n_x} x \in R$ to each $x \in V \setminus \ens{0}$ and $0 \in R$ to $0 \in V$. Then $(V,C,D_0,D_1)$ forms a bounded factorisation system of $R$.
\item[(iv)] Suppose that $R$ has a bounded factorisation system $(V,C,D_0,D_1)$. Put $V_X \coloneqq \set{f \in \rC(X,R)}{\forall x \in X, f(x) \in V}$ and $C_X \coloneqq C$. For each $i \in \ens{0,1}$, we define $D_{X,i} \colon V_X \to \rC(X,R)$ as a map assigning $D_i \circ f$ to each $f \in V_X$. Then $(V_X,C_X,D_{X,0},D_{X,1})$ forms a bounded factorisation system of $\rC(X,R)$, and $I_{X,R,K}$ is stable under $D_{X,0}$ and $D_{X,1}$.
\end{itemize}
\end{exm}

We say that $K$ is a {\it $G_{\delta}$ set of $X$} if there is a non-empty countable set $\cU$ of open subsets of $X$ such that $\bigcap_{U \in \cU} U = K$.

\begin{thm}
\label{closed immersion with projective kernel}
If $X$ is totally disconnected, $R$ has a bounded factorisation system, and $K$ is a $G_{\delta}$ set of $X$, then $\pi_{X,R,K}$ is a homotopy epimorphism in $\Alg_{\cC}(R)$.
\end{thm}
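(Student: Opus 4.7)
The plan is to reduce the theorem to showing that $I_{X,R,K}$ is a projective object of $\Mod_{\cC}(\rC(X,R))$, and then to conclude by an explicit computation of $\rC(K,R) \wh{\otimes}_{\rC(X,R)}^{\bL} \rC(K,R)$ together with a vanishing of the relevant completed tensor product. The preliminary step is topological: using the total disconnectedness of $X$ and the $G_{\delta}$ assumption on $K$, I construct a decreasing sequence of clopens $V_1 \supseteq V_2 \supseteq \cdots$ with $\bigcap_n V_n = K$ by shrinking each term of a countable decreasing open presentation of $K$ to a clopen neighbourhood (which exists in any totally disconnected compact Hausdorff space) and then taking finite intersections. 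Setting $W_0 = X \setminus V_1$ and $W_n = V_n \setminus V_{n+1}$ for $n \geq 1$ yields a partition $X \setminus K = \bigsqcup_{n \geq 0} W_n$ into clopens, and a standard compactness argument forces $\n{f|_{V_n}} \to 0$ for every $f \in I_{X,R,K}$.

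The main step is the projectivity of $I_{X,R,K}$. Each $\chi_{W_n} \rC(X,R) \cong \rC(W_n,R)$ is a projective direct summand of the regular module $\rC(X,R)_1$ by Lemma \ref{idempotent gives direct summand} and Proposition \ref{projectivity of R}, and the decay $\n{\chi_{W_n} f} \to 0$ supplies a convergent decomposition $f = \sum_{n \geq 0} \chi_{W_n} f$. My plan is to use the bounded factorisation system $(V,C,D_0,D_1)$ of $R$, lifted to $\rC(X,R)$ via Example \ref{bounded factrisation system}(iv) to a system that stabilises $I_{X,R,K}$ (since $D_i(0) = 0$), in order to construct an explicit bounded section of an evaluation map $\ell^{\cC}(S,\rC(X,R)) \twoheadrightarrow I_{X,R,K}$ from a free module. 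The factorisation $\chi_{W_n} f = D_{X,0}(\chi_{W_n} f) \cdot D_{X,1}(\chi_{W_n} f)$ with $\max\ens{\n{D_{X,0}(\chi_{W_n} f)},\n{D_{X,1}(\chi_{W_n} f)}} \leq C\n{\chi_{W_n} f}^{1/2}$ produces generators and coefficients of square-root norm, which is precisely the extra regularity needed to convert mere decay into the stronger summability condition defining $\wh{\bigoplus}^{\cC_{\leq 1}}$ ($\ell^1$-summability when $\cC = \BAb$ and $c_0$-decay when $\cC = \NBAb$). After iterating and bookkeeping, this presents $I_{X,R,K}$ as a retract of a free object, hence as projective.

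Granted the projectivity, the strictly exact sequence $0 \to I_{X,R,K} \to \rC(X,R) \to \rC(K,R) \to 0$ is a length-two projective resolution of $\rC(K,R)$, so Proposition \ref{derived symmetric monoidal structure}(ii)--(iii) represents $\rC(K,R) \wh{\otimes}_{\rC(X,R)}^{\bL} \rC(K,R)$ by the two-term complex $[I_{X,R,K} \wh{\otimes}_{\rC(X,R)} \rC(K,R) \to \rC(K,R)]$ whose differential is zero (as $I_{X,R,K} \subseteq \ker \pi_{X,R,K}$). For the vanishing of $I_{X,R,K} \wh{\otimes}_{\rC(X,R)} \rC(K,R)$, I observe that $V_n \supseteq K$ forces $\pi_{X,R,K}(1 - \chi_{V_n}) = 0$, so for any $i \in I_{X,R,K}$ and $f \in \rC(K,R)$ the bilinear relation over $\rC(X,R)$ gives
\begin{eqnarray*}
((1 - \chi_{V_n}) i) \otimes f = i \otimes (\pi_{X,R,K}(1 - \chi_{V_n}) f) = i \otimes 0 = 0
\end{eqnarray*}
in $I_{X,R,K} \otimes_{\rC(X,R)} \rC(K,R)$; since $(1 - \chi_{V_n}) i \to i$ by the norm-decay of $\chi_{V_n} i$ and the tensor product map is continuous, every elementary tensor is zero in the completion, so the completed tensor product vanishes. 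Therefore the multiplication $\rC(K,R) \wh{\otimes}_{\rC(X,R)}^{\bL} \rC(K,R) \to \rC(K,R)$ is an isomorphism in $\Derba_{\cC}(\rC(X,R))$, proving that $\pi_{X,R,K}$ is a homotopy epimorphism in $\Alg_{\cC}(R)$.

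The main obstacle is the explicit construction of the bounded section in the projectivity step. The topological partition alone only yields sup-norm convergence of $\sum_n \chi_{W_n} f$, whereas a bounded section into the completed coproduct $\wh{\bigoplus}^{\cC_{\leq 1}}$ demands genuine $\ell^1$-summability (in the Archimedean case) or uniform decay compatible with the ultrametric (in the non-Archimedean case); the bounded factorisation system is the precise device that bridges this gap, since the quadratic control $\n{D_{X,i}(g)} \leq C\n{g}^{1/2}$ converts slow decay into the required summability after suitable iteration and regrouping. Making this construction uniform in all the data while producing a genuine retraction is where the hypothesis that $R$ has a bounded factorisation system becomes indispensable.
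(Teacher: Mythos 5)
Your overall architecture matches the paper's: resolve $\rC(K,R)$ by the two-term complex $[I_{X,R,K}\to\rC(X,R)]$, show that this resolution is termwise projective (hence termwise strongly flat), show that $\rC(K,R)\wh{\otimes}_{\rC(X,R)} I_{X,R,K}=\ens{0}$, and conclude via Proposition \ref{flat epimorphism is homotopy epimorphism}. Your vanishing argument via the approximate units $1-\chi_{V_n}$ is correct and is a nice variant of the paper's route through the idempotence $I_{X,R,K}=I_{X,R,K}^2$ (Lemmata \ref{idempotence of I for projective setting} and \ref{Idempotence implies the vanishing}); it even avoids the factorisation system at that point. Two issues remain, one of them serious.

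The serious gap is the projectivity step. The mechanism you propose --- that the bound $\n{D_{X,i}(g)}\leq C\n{g}^{1/2}$ ``converts slow decay into summability'' --- cannot work: for $\n{g}\leq 1$ one has $\n{g}^{1/2}\geq\n{g}$, the product of the two square-root bounds merely recovers $C^{2}\n{\chi_{W_n}f}$, and iterating $D_0,D_1$ pushes exponents towards $0$, i.e.\ in the wrong direction; a function with $\n{\chi_{W_n}f}=1/n$ defeats any $\ell^1$-summability claim. The bounded factorisation system is designed to prove $I_{X,R,K}=I_{X,R,K}^2$ (every element is a product of two elements of the ideal), not to improve decay rates, and in the paper it plays no role whatsoever in the projectivity of $I_{X,R,K}$. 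The paper's Lemma \ref{projectivity of I} instead verifies the lifting property directly against an arbitrary strict epimorphism $\phi\colon M_0\to M_1$: one chooses uniformly bounded lifts $m_i\in M_0$ of $\ol{\phi}^{-1}(\psi(\chi_i))$ for the characteristic functions $\chi_i$ attached to the clopen presentation of $K$, and shows that a telescoped sequence built from $f$ and the $m_i$ is Cauchy because $\sup_{x\in U_i}\n{f(x)}_R\to 0$; only convergence of a Cauchy sequence, never summability into $\wh{\bigoplus}^{\cC_{\leq 1}}$, is required. You need to replace your retract-of-a-free-module construction by such a direct lifting argument. A secondary gap: you assert without proof that $0\to I_{X,R,K}\to\rC(X,R)\to\rC(K,R)\to 0$ is strictly exact; the surjectivity of $\pi_{X,R,K}$ with norm control is a Tietze-type extension theorem for a general Banach ring $R$ and requires an argument using the total disconnectedness of $X$ (Lemmata \ref{approximating extension} and \ref{Tietze for projective setting}).
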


We note that if $K$ is a clopen subset of $X$, then $K$ is a $G_{\delta}$ set of $X$ and $\pi_{X,R,K}$ is a homotopy epimorphism in $\Alg_{\cC}(R)$ by Proposition \ref{clopen subset is homotopy Zariski immersion}. Since a closed $G_{\delta}$ set of $X$ is not necessarily clopen, Theorem \ref{closed immersion with projective kernel} gives more non-trivial examples of homotopy epimorphisms. In order to verify Theorem \ref{closed immersion with projective kernel}, we prepare conventions and lemmata. We denote by $\CO(X)$ the set of clopen subsets of $X$.

\begin{lmm}
\label{ultranormality}
Let $F_0$ and $F_1$ be closed subsets of $X$ with $F_0 \cap F_1 = \emptyset$. If $X$ is totally disconnected, then there exists a $U \in \CO(X)$ such that $F_0 \subset U$ and $F_1 \subset X \setminus U$.
\end{lmm}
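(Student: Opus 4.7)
The plan is to use the standard fact that a compact Hausdorff totally disconnected space is a Stone space, in which clopen sets form a basis for the topology, and then to pass from pointwise separation to the desired $U$ via compactness of $F_0$.

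First I would establish the basis property: for any $x \in X$ and any open neighbourhood $V$ of $x$, there exists a $W \in \CO(X)$ with $x \in W \subset V$. This follows from the classical fact that in a compact Hausdorff space the connected component of $x$ equals the quasi-component $Q_x \coloneqq \bigcap \set{W \in \CO(X)}{x \in W}$; total disconnectedness of $X$ then forces $Q_x = \ens{x}$. Applying compactness to the closed set $X \setminus V$ covered by the open family $\set{X \setminus W}{x \in W \in \CO(X)}$ yields a finite subcover, whose complementary intersection is a clopen neighbourhood of $x$ contained in $V$.

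Next, for each $x \in F_0$, the set $X \setminus F_1$ is an open neighbourhood of $x$, so the previous step produces some $U_x \in \CO(X)$ with $x \in U_x \subset X \setminus F_1$. The family $(U_x)_{x \in F_0}$ is an open cover of $F_0$, and since $F_0$ is closed in the compact space $X$ it is itself compact, so a finite subfamily $U_{x_1},\ldots,U_{x_n}$ already covers $F_0$. Taking $U \coloneqq U_{x_1} \cup \cdots \cup U_{x_n}$ gives a finite union of clopen sets, hence an element of $\CO(X)$, satisfying $F_0 \subset U$ and $U \cap F_1 = \emptyset$, i.e.\ $F_1 \subset X \setminus U$.

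The only non-trivial ingredient is the equality of the connected component and the quasi-component in a compact Hausdorff space; everything else is a routine compactness argument. Since this equality is a well-known theorem of point-set topology, there is no real obstacle, and the proof should be short.
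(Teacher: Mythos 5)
Your proof is correct and follows essentially the same route as the paper: cover $F_0$ by clopen sets $U_x \subset X \setminus F_1$, extract a finite subcover by compactness, and take the union. The only difference is that you spell out the standard fact that clopen sets form a basis (via the coincidence of components and quasi-components), which the paper simply asserts.
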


\begin{proof}
Since $X$ is a totally disconnected compact Hausdorff topological space, the topology of $X$ is generated by $\CO(X)$. For any $x \in F_0$, since $X \setminus F_1$ is an open neighbourhood of $x$, there exists a $U_x \in \CO(X)$ such that $x \in U_x \subset X \setminus F_1$. Put $\cU \coloneqq \set{U_x}{x \in F_0}$. Then $\cU$ is a clopen cover of $F_0$. By the compactness of $F_0$, there exists a finite subcover  $\cU_0 \subset \cU$ of $F_0$. Then $U \coloneqq \bigcup_{U \in \cU_0} U$ is a desired clopen subset of $X$.
\end{proof}

\begin{lmm}
\label{idempotent approximation}
If $X$ is totally disconnected and $K$ is a $G_{\delta}$ set of $X$, then $K$ is the intersection of a non-empty countable subset of $\CO(X)$.
\end{lmm}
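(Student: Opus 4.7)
The plan is to reduce the statement to a direct application of Lemma \ref{ultranormality}. Since $K$ is a closed subset of $X$ (by the standing hypothesis of this section) and $X$ is totally disconnected, that lemma gives us clopen separation of $K$ from any disjoint closed set; the $G_\delta$ hypothesis then provides a countable family of open neighbourhoods of $K$ to which we apply this separation, and intersecting the resulting clopen sets gives back $K$.

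Concretely, I would first fix a non-empty countable family $\cU$ of open subsets of $X$ with $K = \bigcap_{U \in \cU} U$. For each $U \in \cU$, the sets $K$ and $X \setminus U$ are disjoint and both closed, so Lemma \ref{ultranormality} produces a $V_U \in \CO(X)$ such that $K \subset V_U$ and $X \setminus U \subset X \setminus V_U$; the latter rewrites as $V_U \subset U$. Setting $\cV \coloneqq \set{V_U}{U \in \cU}$, we obtain a non-empty countable subset of $\CO(X)$ (non-empty because $\cU$ is), and the sandwich
\begin{eqnarray*}
K \subset \bigcap_{V \in \cV} V \subset \bigcap_{U \in \cU} U = K
\end{eqnarray*}
forces $K = \bigcap_{V \in \cV} V$, which is the required representation.

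I do not foresee any real obstacle: the result is essentially immediate from Lemma \ref{ultranormality} together with the countability and non-emptiness built into the definition of a $G_\delta$ set. The only point worth checking explicitly is that the cardinality and non-emptiness of $\cV$ match those required in the conclusion, which is clear from the bijection $U \mapsto V_U$ with $\cU$.
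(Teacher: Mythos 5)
Your proof is correct and follows essentially the same route as the paper: take a countable family $\cU$ of open sets intersecting to $K$, apply Lemma \ref{ultranormality} to each pair $(K, X \setminus U)$ to interpolate a clopen $V_U$ with $K \subset V_U \subset U$, and conclude by the sandwich argument. No gaps.
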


\begin{proof}
Take a non-empty countable set $\cU$ of open subsets of $X$ with $\bigcap_{U \in \cU} U = K$. By Lemma \ref{ultranormality} applied to $(F_0,F_1) = (K,X \setminus U)$ for each $U \in \cU$, there exists a $(V_U)_{U \in \cU} \in \CO(X)^{\cU}$ such that $K \subset V_U \subset U$ for any $U \in \cU$. In particular, we have $K \subset \bigcap_{U \in \cU} V_U \subset \bigcap_{U \in \cU} U = K$, and hence $\bigcap_{U \in \cU} V_U = K$.
\end{proof}

\begin{lmm}
\label{finite intersection property}
Let $f \in I_{X,R,K}$ and $\epsilon \in (0,\infty)$. For any non-empty decreasing sequence $(F_n)_{n=0}^{\infty}$ of closed subsets of $X$ with $\bigcap_{n=0}^{\infty} F_n = K$, there exists an $n \in \N$ such that $\n{f(x)}_R < \epsilon$ for any $x \in F_n$.
\end{lmm}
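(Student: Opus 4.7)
The plan is a standard compactness argument using the finite intersection property for closed subsets of the compact space $X$. For each $n \in \N$, I would define
\begin{eqnarray*}
G_n \coloneqq \set{x \in F_n}{\n{f(x)}_R \geq \epsilon}.
\end{eqnarray*}
Since $f \colon X \to R$ is continuous and the norm $\n{\cdot}_R \colon R \to [0,\infty)$ is continuous (as a map of metric spaces), the composite $x \mapsto \n{f(x)}_R$ is a continuous map $X \to [0,\infty)$, so the preimage of the closed subset $[\epsilon,\infty)$ is closed in $X$. Intersecting with the closed set $F_n$ yields that each $G_n$ is closed in $X$, hence compact.

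Next I would observe that $(G_n)_{n=0}^\infty$ is a decreasing sequence since $(F_n)_{n=0}^\infty$ is decreasing, and that
\begin{eqnarray*}
\bigcap_{n=0}^{\infty} G_n \subset \left(\bigcap_{n=0}^{\infty} F_n\right) \cap \set{x \in X}{\n{f(x)}_R \geq \epsilon} = K \cap \set{x \in X}{\n{f(x)}_R \geq \epsilon} = \emptyset,
\end{eqnarray*}
where the last equality uses the hypothesis $f \in I_{X,R,K}$, i.e.\ $f(x) = 0$ for all $x \in K$, together with $\epsilon > 0$. By the compactness of $X$, a family of closed subsets with empty intersection must have a finite subfamily with empty intersection; since the sequence is decreasing, this forces $G_n = \emptyset$ for some $n \in \N$. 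For this $n$, every $x \in F_n$ satisfies $\n{f(x)}_R < \epsilon$, which is the conclusion.

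There is essentially no serious obstacle here: the argument is a textbook application of compactness and continuity, and the only mild point to check is that the norm on $R$, being the metric defining its topology, is continuous so that $G_n$ is genuinely closed. The hypothesis that $(F_n)$ is non-empty and decreasing is used only insofar as we may pass from ``some finite intersection is empty'' to ``some single $G_n$ is empty''.
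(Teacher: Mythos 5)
Your proof is correct and is exactly the argument the paper has in mind: the paper's proof is a one-line appeal to the compactness of the closed subset $\set{x \in X}{\n{f(x)}_R \geq \epsilon}$, and your write-up simply spells out the decreasing-sequence-of-compacta argument in full. No discrepancies.
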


\begin{proof}
The assertion immediately follows from the compactness of the closed subset $\set{x \in X}{\n{f(x)}_R \geq \epsilon}$.
\end{proof}

\begin{lmm}
\label{projectivity of I}
If $K$ is the intersection of a non-empty countable subset of $\CO(X)$, then $I_{X,R,K}$ is a projective object of $\Mod_{\cC}(\rC(X,R))$.
\end{lmm}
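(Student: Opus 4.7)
The proof plan is to realise $I_{X,R,K}$ as an explicit completed direct sum of projective summands of the regular $\rC(X,R)$-module, obtained from a disjoint clopen partition of $X \setminus K$, and then conclude using the fact that the completed direct sum $\wh{\bigoplus}^{\cC_{\leq 1}}$ preserves projectivity (as noted in the paper just before Proposition \ref{derived symmetric monoidal structure}).

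First I would use the hypothesis to pick a sequence $(U_n)_{n \in \N}$ of clopen subsets of $X$ with $\bigcap_n U_n = K$, and pass to a decreasing sequence by replacing $U_n$ with $U_0 \cap \cdots \cap U_n$. Setting $U_{-1} \coloneqq X$ and $W_n \coloneqq U_{n-1} \setminus U_n \in \CO(X)$ gives pairwise disjoint clopen sets whose union is $X \setminus K$. The indicator functions $\chi_{W_n}$ are orthogonal idempotents in $\rC(X,R)$, so by Lemma \ref{idempotent gives direct summand} each $M_n \coloneqq \chi_{W_n} \rC(X,R)$ is a direct summand of the regular module $\rC(X,R)_1$, which is projective by Proposition \ref{projectivity of R}. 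Hence each $M_n$ is projective.

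Next I would construct the $\rC(X,R)$-linear map
\begin{eqnarray*}
\Phi \colon \wh{\bigoplus}_{n \in \N}^{\cC_{\leq 1}} M_n \to I_{X,R,K}, \quad (f_n)_{n \in \N} \mapsto \sum_{n \in \N} f_n,
\end{eqnarray*}
and check that it is an isomorphism in $\Mod_{\cC}(\rC(X,R))$ with inverse $f \mapsto (\chi_{W_n} f)_{n \in \N}$. The pointwise sum $\sum_n f_n$ is well-defined on $X \setminus K$ by the disjointness of the $W_n$, and its continuity at points of $K$ together with the vanishing on $K$ follow from the norm-decay inherent to $\wh{\bigoplus}^{\cC_{\leq 1}}$ combined with Lemma \ref{finite intersection property}. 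Conversely, for any $f \in I_{X,R,K}$, the tail estimate $\sup_{x \in U_N} \n{f(x)}_R \to 0$ supplied by Lemma \ref{finite intersection property} together with $W_n \subset U_{n-1}$ guarantees that $(\chi_{W_n} f)_{n \in \N}$ is an element of the completed direct sum. Once $\Phi$ is identified as an isomorphism, $I_{X,R,K}$ is a completed direct sum of projective modules and hence projective.

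The main obstacle is verifying that $\Phi$ and its candidate inverse are bounded, norm-respecting, mutually inverse morphisms rather than merely $\rC(X,R)$-linear bijections of underlying sets: this reduces to reconciling the supremum norm on $I_{X,R,K}$ with the norm of the completed direct sum on the target. In the non-Archimedean setting the $c_0$-style completion makes this immediate from Lemma \ref{finite intersection property}, while the Archimedean setting requires a finer analysis, to match the $\ell^1$-type completion with the uniform norm by using the disjoint support structure of the $\chi_{W_n} f$.
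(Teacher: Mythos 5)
Your reduction to a completed direct sum of the summands $\chi_{W_n}\rC(X,R)$ works in the non-Archimedean case, where $\wh{\bigoplus}^{\cC_{\leq 1}}$ is the $c_0$-type sum: disjointness of the $W_n$ makes $\Phi$ an isometry, and Lemma \ref{finite intersection property} gives exactly the decay $\n{\chi_{W_n}f}_{\rC(X,R)} \to 0$ needed for the inverse. But in the Archimedean case ($\cC = \BAb$) there is a genuine gap that the "finer analysis" you defer to cannot close. There the completed coproduct carries the $\ell^1$-norm, so membership of $(\chi_{W_n}f)_{n\in\N}$ in $\wh{\bigoplus}^{\cC_{\leq 1}}_{n} \chi_{W_n}\rC(X,R)$ requires $\sum_{n}\n{\chi_{W_n}f}_{\rC(X,R)} < \infty$, whereas Lemma \ref{finite intersection property} only gives convergence to $0$. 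Concretely, take $X = \ens{0}\cup\set{1/n}{n\geq 1}$, $K=\ens{0}$, $U_n = \ens{0}\cup\set{1/m}{m>n}$, $R=\R$: the function $f(1/n)=1/n$ lies in $I_{X,R,K}$ but $\sum_n \n{\chi_{W_n}f} = \sum_n 1/n$ diverges, and since any preimage under $\Phi$ is forced by disjointness of supports to be $(\chi_{W_n}f)_n$, the map $\Phi$ is not surjective; its image is only a dense proper submodule. This is not a technicality: as a Banach space $I_{X,R,K}$ here is $c_0$, which is not of the form $\ell^1$-sum of the summands, so no choice of bounded inverse exists and the decomposition strategy fails structurally in the Archimedean setting.

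The paper's proof avoids any identification of $I_{X,R,K}$ with a coproduct and instead verifies the lifting property directly: given a strict epimorphism $\phi\colon M_0\to M_1$ and $\psi\colon I_{X,R,K}\to M_1$, it lifts the cutoff idempotents $\chi_i$ (characteristic functions of $X\setminus U_i$) to elements $m_i\in M_0$ of uniformly bounded norm, and for each $f\in I_{X,R,K}$ uses Lemma \ref{finite intersection property} to show that the resulting sequence of approximate lifts is Cauchy in $M_0$ under the sup norm, defining $\tl{\psi}(f)$ as its limit. This exploits the $\rC(X,R)$-module structure (multiplication by $f$ against the uniformly bounded $m_i$) in a way that a purely Banach-space-level coproduct decomposition cannot, which is precisely what rescues the Archimedean case. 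If you restrict attention to $\cC=\NBAb$ your argument is a valid and arguably cleaner alternative, but as written it does not prove the lemma in the generality stated.
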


\begin{proof}
If $K = \emptyset$, then we have $I_{X,R,K} = \rC(X,R)$, and hence the projectivity of $I_{X,R,K}$ follows from Proposition \ref{projectivity of R}. Therefore we may assume $K \neq \emptyset$. Let $\phi \colon M_0 \to M_1$ be a strict epimorphism in $\Mod_{\cC}(\rC(X,R))$, and $\psi \colon I_{X,R,X} \to M_1$ a morphism in $\Mod_{\cC}(\rC(X,R))$. By the strictness of $\phi$, the morphism $\ol{\phi} \colon M_0/\ker(\phi) \to M_1$ associated to $\phi$ by the universality of the coimage is an isomorphism in $\Mod_{\cC}(\rC(X,R))$. We construct a morphism $\tl{\psi} \colon I_{X,R,K} \to M_0$ in $\Mod_{\cC}(\rC(X,R))$ such that $\phi \circ \tl{\psi} = \psi$. If $\psi = 0$, then $\tl{\psi}$ can be taken as $0$. Therefore we may assume $\psi \neq 0$. Take a non-empty countable subset $\cU \subset \CO(X)$ with $\bigcap_{U \in \cU} U = K$, and a surjective map $U_{\bullet} \colon \N \twoheadrightarrow \cU$. Replacing $\cU$ by $\ens{X} \cup \set{\bigcap_{i=0}^{n} U_i}{n \in \N}$, we may assume that $U_{\bullet}$ forms a decreasing sequence with $U_0 = X$. For each $i \in \N$, we denote by $\chi_i \in \rC(X,R)$ the characteristic function of $X \setminus U_i$, which belongs to $I_{X,R,K}$ by $K \subset U_i$. Let $i \in \N$. By $\psi \neq 0$, we have $R \neq \ens{0}$ and $M_1 \neq \ens{0}$. By $R \neq \ens{0}$ and $\emptyset \neq K \subset U_i$, we have $\n{\chi_i}_{\rC(X,R)} = \n{1}_R \neq 0$. By $M_1 \neq \ens{0}$, we have $C_0 \coloneqq \n{\ol{\phi}^{-1}}_{\sup} \in (0,\infty)$ and $C_1 \coloneqq \n{\psi}_{\sup} \in (0,\infty)$, where $\n{\cdot}_{\sup}$ denotes the operator norm. We obtain
\begin{eqnarray*}
\n{\ol{\phi}^{-1}(\psi(\chi_i))}_{M_0/\ker(\phi)} \leq C_0 C_1 \n{\chi_i}_{I_{X,R,K}} \leq C_0 C_1 \n{1}_R < 2 C_0 C_1 \n{1}_R,
\end{eqnarray*}
and hence there exists an $m_i \in M_0$ such that $m_i + \ker(\phi) = \ol{\phi}^{-1}(\psi(\chi_i))$ and $\n{m_i}_{M_0} < 2 C_0 C_1 \n{1}_R$. Let $f \in I_{X,R,K}$. We denote by $m(f)_{\bullet} \colon \N \to M_0$ the sequence assigning $f(m_0 - \chi_{n+1} m_n)$ to each $n \in \N$. We denote by $\mu_0 \colon R \wh{\otimes} M_0 \to M_0$ the scalar multiplication. Put $C_2 \coloneqq \n{\mu_0}_{\sup} \in (0,\infty)$. We have
\begin{eqnarray*}
& & \lim_{i_0 \to \infty} \sup_{i_1 \geq i_0} \n{m(f)_{i_1} - m(f)_{i_0}}_{M_0} = \lim_{i_0 \to \infty} \sup_{i_1 \geq i_0} \n{f(\chi_{i_0+1} m_{i_0+1} - \chi_{i_1+1} m_{i_1+1})}_{M_0} \\
& \leq & \lim_{i_0 \to \infty} \sup_{i_1 \geq i_0} C_2 \left( \n{f \chi_{i_0+1}}_{\rC(X,R)} \n{m_{i_0+1}}_{M_0} + \n{f \chi_{i_1+1}}_{\rC(X,R)} \n{m_{i_1+1}}_{M_0} \right) \\
& \leq & 2 C_0 C_1 C_2 \n{1}_R \lim_{i_0 \to \infty} \sup_{i_1 \geq i_0} \left( \n{f \chi_{i_0+1}}_{\rC(X,R)} + \n{f \chi_{i_1+1}}_{\rC(X,R)} \right) \\
& \leq & 4 C_0 C_1 C_2 \n{1}_R \lim_{i_0 \to \infty} \sup_{x \in U_{i_0+1}} \n{f(x)}_R = 0
\end{eqnarray*}
by Lemma \ref{finite intersection property}, and hence $m(f)_{\bullet}$ is a Cauchy sequence in $M_0$. We denote by $m(f)$ the limit of $m(f)_{\bullet}$ in $M_0$. Then the map $\ol{\psi} \colon I_{X,R,K} \to M_0, \ f \mapsto m(f)$ satisfies the desired property.
\end{proof}

\begin{lmm}
\label{idempotence of I for projective setting}
If $R$ has a bounded factorisation system and $K$ is the intersection of a non-empty countable subset of $\CO(X)$, then $I_{X,R,K} = I_{X,R,K}^2$ holds.
\end{lmm}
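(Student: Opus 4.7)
The plan is to exhibit every $f \in I_{X,R,K}$ as a sum of two products in $I_{X,R,K}^2$: one obtained by applying the bounded factorisation system to a sufficiently small truncation of $f$, and the other the trivial product of $f$ with an idempotent cutoff. First I would invoke the hypothesis to write $K = \bigcap_{n \in \N} U_n$ for a non-empty countable family $\ens{U_n}_{n \in \N}$ of clopen subsets of $X$, and may arrange for this family to form a decreasing sequence with $U_0 = X$ by passing to finite intersections. Let $\chi_n \in \rC(X,R)$ denote the characteristic function of the clopen set $X \setminus U_n$; each $\chi_n$ is a continuous idempotent that vanishes on $U_n \supset K$ and hence lies in $I_{X,R,K}$.

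Next I would use the bounded factorisation system $(V, C, D_0, D_1)$ of $R$. Since $V$ is a neighbourhood of $0 \in R$ for the norm topology, it contains a ball $\set{a \in R}{\n{a}_R < \delta}$ for some $\delta > 0$. For a given $f \in I_{X,R,K}$, Lemma \ref{finite intersection property} applied to $(U_n)_{n \in \N}$ produces an integer $N$ with $\n{f(x)}_R < \delta$ for every $x \in U_N$. Consequently $(1 - \chi_N) f$ takes values $f(x) \in V$ on $U_N$ and $0 \in V$ on $X \setminus U_N$, so it lies in the subset $V_X$ of Example \ref{bounded factrisation system} (iv). The induced factorisation system on $\rC(X,R)$ then yields $(1 - \chi_N) f = D_{X,0}((1 - \chi_N) f) \cdot D_{X,1}((1-\chi_N)f)$, with both factors in $I_{X,R,K}$ by the stability clause of that example.

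Combining this with the trivial factorisation $\chi_N f = \chi_N \cdot f$ (a product of two elements of $I_{X,R,K}$) presents $f$ as a finite sum of products of elements of $I_{X,R,K}$, establishing $I_{X,R,K} \subset I_{X,R,K}^2$; the reverse inclusion is automatic. The main point I expect to require care is the passage from a convergent decomposition to a finite one: a naive telescoping $f = \sum_n (\chi_{n+1} - \chi_n) f$ would converge only in norm, whereas the bounded factorisation system lets one cut the telescope off at a single finite stage $N$, provided one checks that $(1 - \chi_N) f$ genuinely takes values in $V$ and not merely has small sup norm.
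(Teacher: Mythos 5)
Your argument is correct and follows essentially the same route as the paper's: use Lemma \ref{finite intersection property} to find a clopen $U \supset K$ on which $f$ takes values in $V$, cut off by the characteristic function $\chi$ of $X \setminus U$, and apply the induced bounded factorisation system of Example \ref{bounded factrisation system} (iv) to the truncated part. The only difference is cosmetic: the paper packages the two pieces into a single product $f = (f\chi + D_0 \circ (f(1-\chi)))\,(\chi + D_1 \circ (f(1-\chi)))$, whereas you write $f$ as a sum of two products, which suffices equally because $I_{X,R,K}^2$ is taken to consist of finite sums of products (as used in Lemma \ref{Idempotence implies the vanishing}).
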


\begin{proof}
Take a bounded factorisation system $(V,C,D_0,D_1)$ of $R$. Let $f \in I_{X,R,K}$. We construct an $(f_0,f_1) \in I_{X,R,K}^2$ such that $f = f_0 f_1$. By Lemma \ref{finite intersection property}, there exists a $U \in \CO(X)$ such that $K \subset U$ and $f(x) \in V$ for any $x \in U$. We denote by $\chi \in I_{X,R,K}$ the characteristic function of $X \setminus U$. Put $f_0 \coloneqq f \chi + D_0 \circ (f(1 - \chi))$ and $f_1 \coloneqq \chi + D_1 \circ (f(1 - \chi))$. Then the pair $(f_0,f_1)$ satisfies the desired property.
\end{proof}

\begin{lmm}
\label{Idempotence implies the vanishing}
If $I_{X,R,K} = I_{X,R,K}^2$, then $\rC(K,R) \wh{\otimes}_{\rC(X,R)} I_{X,R,K} = \ens{0}$.
\end{lmm}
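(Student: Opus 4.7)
The plan is to exploit the fact that $\rC(K,R) = \rC(X,R)/I_{X,R,K}$, so every element of $I_{X,R,K}$ acts as zero on $\rC(K,R)$. Together with the hypothesis $I_{X,R,K} = I_{X,R,K}^2$, this will force every elementary tensor in $\rC(K,R) \wh{\otimes}_{\rC(X,R)} I_{X,R,K}$ to vanish, and then density will promote the vanishing to the whole completed tensor product.

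First, I would recall the explicit construction of $\wh{\otimes}_{\rC(X,R)}$ given in \S \ref{Banach algebras}: the object $\rC(K,R) \wh{\otimes}_{\rC(X,R)} I_{X,R,K}$ is the cokernel in $\cC$ of the difference of left and right scalar multiplications $\rC(K,R) \wh{\otimes} \rC(X,R) \wh{\otimes} I_{X,R,K} \rightrightarrows \rC(K,R) \wh{\otimes} I_{X,R,K}$, so the image of the algebraic tensor product $\rC(K,R) \otimes_{\rC(X,R)} I_{X,R,K}$ is dense in it. Next, for any elementary tensor $\bar{a} \otimes b$ with $\bar{a} \in \rC(K,R)$ and $b \in I_{X,R,K}$, I would use the hypothesis to factor $b = b_1 b_2$ with $b_1, b_2 \in I_{X,R,K}$, and then compute by $\rC(X,R)$-bilinearity
\begin{eqnarray*}
\bar{a} \otimes b \;=\; \bar{a} \otimes (b_1 b_2) \;=\; (b_1 \bar{a}) \otimes b_2 \;=\; 0 \otimes b_2 \;=\; 0,
\end{eqnarray*}
where the third equality is because $b_1 \in I_{X,R,K} = \ker(\pi_{X,R,K})$ acts as $0$ on $\rC(K,R)$. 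Since every element of $\rC(K,R) \otimes_{\rC(X,R)} I_{X,R,K}$ is a finite sum of such elementary tensors, the algebraic tensor product maps to $0$, and by density it follows that $\rC(K,R) \wh{\otimes}_{\rC(X,R)} I_{X,R,K} = \ens{0}$.

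I do not anticipate a serious obstacle, as the argument is a purely formal manipulation once the hypothesis is unpacked. The only mild subtlety is that we need $I_{X,R,K} = I_{X,R,K}^2$ in the strong sense that every element factors as a single product $b = b_1 b_2$, rather than only as a finite sum of such products; however, Lemma \ref{idempotence of I for projective setting} supplies exactly this single-product factorisation, so the step goes through without modification. (Even if only a finite sum factorisation were available, $\rC(X,R)$-bilinearity and finite additivity would still give the vanishing on elementary tensors.)
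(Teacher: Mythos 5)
Your proposal is correct and follows essentially the same route as the paper's own proof: express an element of the algebraic tensor product as a finite sum of elementary tensors, use $I_{X,R,K} = I_{X,R,K}^2$ to factor each $I_{X,R,K}$-entry as a product (or sum of products), move one factor across the tensor sign where it acts as $\pi_{X,R,K}$ of an element of the kernel, i.e.\ as zero, and conclude by density of the algebraic tensor product in the completed one. Your closing remark that a finite-sum factorisation would suffice is exactly how the paper phrases the computation, so the two arguments coincide in substance.
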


\begin{proof}
Let $f \in \rC(X,R) \otimes_{\rC(X,R)} I_{X,R,K}$, the algebraic tensor product. Take a presentation $f = \sum_{i=0}^{n} g_i \otimes h_i$ with $n \in \N$. By $I_{X,R,K} = I_{X,R,K}^2$, for each $i \in \N$ with $i \leq n$, there exists an $(h_{i,j,0},h_{i,j,1})_{j=0}^{n_i} \in (I_{X,R,K} \times I_{X,R,K})^{n_i+1}$ with $n_i \in \N$ and $h_i = \sum_{j=0}^{n_i} h_{i,j,0} h_{i,j,1}$. Then we have
\begin{eqnarray*}
& & f = \sum_{i=0}^{n} g_i \otimes h_i = \sum_{i=0}^{n} \sum_{j=0}^{n_i} g_i \otimes h_{i,j,0} h_{i,j,1} = \sum_{i=0}^{n} \sum_{j=0}^{n_i} g_i \pi_{X,R,K}(h_{i,j,0}) \otimes h_{i,j,1} \\
& = & \sum_{i=0}^{n} \sum_{j=0}^{n_i} g_i \cdot 0 \otimes h_{i,j,1} = 0.
\end{eqnarray*}
This implies $\rC(X,R) \otimes_{\rC(X,R)} I_{X,R,K} = \ens{0}$, and hence $\rC(X,R) \wh{\otimes}_{\rC(X,R)} I_{X,R,K} = \ens{0}$.
\end{proof}

\begin{lmm}
\label{approximating extension}
Let $f \in \rC(K,R)$ and $\epsilon \in (0,\infty)$. If $X$ is totally disconnected, then there exists an $\tl{f} \in \rC(X,R)$ such that $\n{\pi_{X,R,K}(\tl{f}) - f}_{\rC(K,R)} < \epsilon$ and $\n{\tl{f}}_{\rC(X,R)} \leq \n{f}_{\rC(K,R)} + \epsilon$.
\end{lmm}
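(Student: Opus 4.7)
The plan is to exploit the fact that on a totally disconnected compact Hausdorff space the clopen subsets form a basis, so every continuous function from a closed subset can be approximated uniformly by $R$-linear combinations of characteristic functions of disjoint clopen subsets of $X$. Since such characteristic functions extend trivially to $X$, this will produce the required $\tl{f}$.

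First, I would use the continuity of $f$ at each $x \in K$ to pick an open subset $V_x \subset X$ with $x \in V_x$ and $\n{f(y) - f(x)}_R < \epsilon/2$ for all $y \in V_x \cap K$. Because $X$ is a totally disconnected compact Hausdorff topological space, clopen subsets form a basis of the topology, so I can shrink $V_x$ to a clopen neighborhood $W_x \in \CO(X)$ of $x$ still contained in $V_x$. The family $\ens{W_x \cap K}{x \in K}$ is an open cover of the compact set $K$, so I extract a finite subcover indexed by $x_0,\dots,x_n \in K$.

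Next, I would disjointify the finite clopen cover by setting $W_0' \coloneqq W_{x_0}$ and $W_i' \coloneqq W_{x_i} \setminus \bigcup_{j<i} W_{x_j}$ for $1 \leq i \leq n$. Each $W_i'$ is clopen as a finite Boolean combination of elements of $\CO(X)$, the $W_i'$ are pairwise disjoint, and $\bigcup_{i=0}^{n} W_i' = \bigcup_{i=0}^{n} W_{x_i}$ still contains $K$. I then define
\begin{eqnarray*}
\tl{f} \coloneqq \sum_{i=0}^{n} f(x_i) \chi_{W_i'} \in \rC(X,R),
\end{eqnarray*}
which is continuous because it is locally constant, taking the value $f(x_i)$ on the clopen $W_i'$ and $0$ on the clopen complement $X \setminus \bigcup_{i=0}^{n} W_i'$.

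For the norm estimate, disjointness of the $W_i'$ implies that at every point of $X$ the function $\tl{f}$ equals either some $f(x_i)$ or $0$, and since each $x_i \in K$ we get $\n{\tl{f}}_{\rC(X,R)} \leq \max_{0 \leq i \leq n} \n{f(x_i)}_R \leq \n{f}_{\rC(K,R)} \leq \n{f}_{\rC(K,R)} + \epsilon$. For the approximation estimate, any $y \in K$ lies in a unique $W_i'$, hence also in $W_{x_i} \subset V_{x_i}$, so $\n{\pi_{X,R,K}(\tl{f})(y) - f(y)}_R = \n{f(x_i) - f(y)}_R < \epsilon/2$, yielding $\n{\pi_{X,R,K}(\tl{f}) - f}_{\rC(K,R)} \leq \epsilon/2 < \epsilon$. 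The only delicate point is ensuring that the neighborhoods chosen by continuity can actually be replaced by clopen ones, which is precisely where the total disconnectedness hypothesis on $X$ is used; everything else is a straightforward compactness and disjointification argument that works uniformly for Archimedean and non-Archimedean $R$.
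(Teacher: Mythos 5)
Your proposal is correct and follows essentially the same route as the paper: both use total disconnectedness to refine an open cover of $K$ (on which $f$ oscillates by less than $\epsilon/2$) to a finite disjoint clopen cover of $K$ in $X$, and then define $\tl{f}$ as the corresponding locally constant function. The only cosmetic difference is that the paper indexes the cover by target values $a \in R$ via the sets $B(a)$ while you index by points $x_i \in K$ and use the values $f(x_i)$, which in fact gives the slightly sharper bound $\n{\tl{f}}_{\rC(X,R)} \leq \n{f}_{\rC(K,R)}$.
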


\begin{proof}
For each $a \in R$, put $B(a) \coloneqq \set{x \in K}{\n{f(x) - a}_R < 2^{-1} \epsilon}$. Put $\cB \coloneqq \set{B(a)}{a \in R}$. Since $X$ is a totally disconnected compact Hausdorff topological space, the topology of $X$ is generated by $\CO(X)$. In addition, since $\cB$ is an open cover of the compact subset $K$, there exists a finite disjoint clopen cover $\cU$ of $K$ in $X$ such that $\set{K \cap U}{U \in \cU}$ is a refinement of $\cB$. Replacing $\cU$ by $\set{U \in \cU}{K \cap U \neq \emptyset}$, we may assume $K \cap U \neq \emptyset$ for any $U \in \cU$. For each $U \in \cU$, fix an $a \in R$ with $K \cap U \subset B(a)$, and denote by $\tl{f}_U$ the continuous map $X \to R$ which assigns $a$ to each $x \in U$ and $0$ to each $x \in X \setminus U$. Then $\tl{f} \coloneqq \sum_{U \in \cU} \tl{f}_U$ satisfies the desired property.
\end{proof}

\begin{lmm}
\label{Tietze for projective setting}
If $X$ is totally disconnected (resp.\ $X$ is totally disconnected and $R$ is a monoid object of $\cC_{\leq 1}$), then $\pi_{X,R,K}$ is a strict epimorphism in $\Mod_{\cC}(R)$ (resp.\ $\Mod_{\cC_{\leq 1}}(R)$).
\end{lmm}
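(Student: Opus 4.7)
The plan is to build, for each $f \in \rC(K,R)$ and each $\epsilon > 0$, an exact lift $\tl{f} \in \rC(X,R)$ with $\pi_{X,R,K}(\tl{f}) = f$ and $\n{\tl{f}}_{\rC(X,R)} \leq \n{f}_{\rC(K,R)} + \epsilon$, by a standard successive-approximation (telescoping) argument fueled by Lemma \ref{approximating extension}. Concretely, I would fix a sequence $(\epsilon_n)_{n \in \N}$ of positive reals with $\epsilon_n \to 0$ and, when $\cC = \BAb$, also $\sum_{n} \epsilon_n < \epsilon/2$. Set $f_0 \coloneqq f$, and inductively, given $f_n \in \rC(K,R)$, apply Lemma \ref{approximating extension} to produce $\tl{f}_n \in \rC(X,R)$ with $\n{\pi_{X,R,K}(\tl{f}_n) - f_n}_{\rC(K,R)} < \epsilon_n$ and $\n{\tl{f}_n}_{\rC(X,R)} \leq \n{f_n}_{\rC(K,R)} + \epsilon_n$; then set $f_{n+1} \coloneqq f_n - \pi_{X,R,K}(\tl{f}_n)$, so that $\n{f_{n+1}}_{\rC(K,R)} < \epsilon_n$. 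Consequently $\n{\tl{f}_n}_{\rC(X,R)} \leq \epsilon_{n-1} + \epsilon_n$ for $n \geq 1$, which gives absolute summability in the Archimedean case and $\n{\tl{f}_n}_{\rC(X,R)} \to 0$ in the non-Archimedean case.

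Next, I would set $\tl{f} \coloneqq \sum_{n=0}^{\infty} \tl{f}_n$, which converges in $\rC(X,R)$ by completeness in either case. Continuity of $\pi_{X,R,K}$ together with $\n{f_n}_{\rC(K,R)} \to 0$ yields
\begin{eqnarray*}
\pi_{X,R,K}(\tl{f}) \;=\; \lim_{N \to \infty} \sum_{n < N} \pi_{X,R,K}(\tl{f}_n) \;=\; \lim_{N \to \infty} (f - f_N) \;=\; f.
\end{eqnarray*}
The norm estimate $\n{\tl{f}}_{\rC(X,R)} \leq \n{\tl{f}_0}_{\rC(X,R)} + \sum_{n \geq 1}(\epsilon_{n-1} + \epsilon_n)$ in the $\BAb$ case, and $\n{\tl{f}}_{\rC(X,R)} \leq \max\bigl(\n{f}_{\rC(K,R)} + \epsilon_0, \sup_{n \geq 1}(\epsilon_{n-1} + \epsilon_n)\bigr)$ in the $\NBAb$ case, then gives $\n{\tl{f}}_{\rC(X,R)} \leq \n{f}_{\rC(K,R)} + \epsilon$ after choosing the $\epsilon_n$ appropriately (with $\epsilon_0 \leq \epsilon$ in the non-Archimedean case).

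Finally, I would translate this into strict epimorphicity. Surjectivity of $\pi_{X,R,K}$ is immediate from the existence of $\tl{f}$. By the universality of the coimage, $\pi_{X,R,K}$ factors as the canonical projection followed by a bounded $R$-linear bijection $\ol{\pi} \colon \rC(X,R)/\ker(\pi_{X,R,K}) \to \rC(K,R)$. The bound $\n{\tl{f}}_{\rC(X,R)} \leq \n{f}_{\rC(K,R)} + \epsilon$, valid for every $\epsilon > 0$, shows that the quotient norm of each class is bounded above by the sup norm of its image; the inequality $\n{\pi_{X,R,K}}_{\sup} \leq 1$ gives the reverse inequality. Hence $\ol{\pi}$ is an isometric isomorphism, so $\pi_{X,R,K}$ is a strict epimorphism in $\Mod_{\cC}(R)$. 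When moreover $R$ is a monoid object of $\cC_{\leq 1}$, the isometry ensures the isomorphism $\ol{\pi}$ lies in $\Mod_{\cC_{\leq 1}}(R)$, yielding the corresponding statement there.

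The only delicate point is synchronising the choice of the $\epsilon_n$ so that the telescoping series both converges to an honest preimage and achieves the $+\epsilon$ (as opposed to $C \cdot \n{f}$) norm bound uniformly in the Archimedean and non-Archimedean regimes; this is what upgrades the mere boundedness of $\ol{\pi}^{-1}$ to an isometry and thereby gives strictness in $\cC_{\leq 1}$ as well as in $\cC$.
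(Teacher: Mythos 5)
Your proposal is correct and follows essentially the same route as the paper: an iterated application of Lemma \ref{approximating extension} to the successive residuals $f_n$, summing the resulting telescoping series by completeness of $\rC(X,R)$ to get a lift with $\n{\tl{f}}_{\rC(X,R)} \leq \n{f}_{\rC(K,R)} + \epsilon$, which is exactly the estimate the paper reduces strict epimorphicity to (the paper simply fixes the concrete choice $\epsilon_n = 3^{-(n+1)}\epsilon$ rather than leaving the sequence generic). Your closing paragraph making explicit that this bound forces $\ol{\pi}$ to be an isometry, hence an isomorphism in $\Mod_{\cC_{\leq 1}}(R)$ as well, is a correct unpacking of what the paper leaves implicit.
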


\begin{proof}
It suffices to show that for any $(f,\epsilon) \in \rC(K,R) \times (0,\infty)$, there exists an $\tl{f} \in \rC(X,R)$ such that $\pi_{X,R,K}(\tl{f}) = f$ and $\n{\tl{f}}_{\rC(X,R)} \leq \n{f}_{\rC(K,R)} + \epsilon$. If $f = 0$, then $\tl{f}$ can be taken as $0 \in \rC(X,R)$. Therefore we may assume $f \neq 0$. By induction on $n \in \N$, we construct an $\tl{f}_n \in \rC(X,R)$ satisfying the following:
\begin{itemize}
\item[(i)] The inequality $\n{\pi_{X,R,K}(\sum_{i=0}^{n} \tl{f}_i) - f}_{\rC(K,R)} < 3^{-(n+1)} \epsilon$ holds for any $n \in \N$.
\item[(ii)] The inequality $\n{\tl{f}_0}_{\rC(X,R)} < \n{f}_{\rC(K,R)} + 3^{-1} \epsilon$ holds.
\item[(iii)] The inequality $\n{\tl{f}_n}_{\rC(X,R)} < 3^{-(n+1)} 4 \epsilon$ holds for any $n \in \N \setminus \ens{0}$.
\end{itemize}
When $n = 0$, there exists an $\tl{f}_n \in \rC(X,R)$ such that $\n{\pi_{X,R,K}(\tl{f}_n) - f}_{\rC(K,R)} < 3^{-1} \epsilon$ and $\n{\tl{f}_n}_{\rC(X,R)} \leq \n{f}_{\rC(K,R)} + 3^{-1} \epsilon$ by Lemma \ref{approximating extension} applied to $f$ and $3^{-1} \epsilon$. When $n > 0$, there exists an $\tl{f}_n \in \rC(X,R)$ such that $\n{\pi_{X,R,K}(\sum_{i=0}^{n} \tl{f}_i) - f}_{\rC(K,R)} < 3^{-(n+1)} \epsilon$ and $\n{\tl{f}_n}_{\rC(X,R)} \leq 3^{-{n+1}} 4 \epsilon$ by Lemma \ref{approximating extension} applied to $\pi_{X,R,X}(\sum_{i=0}^{n-1} \tl{f}_i) - f$ and $3^{-(n+1)} \epsilon$. By the completeness of $\rC(X,R)$, the infinite sum $\sum_{i=0}^{\infty} \tl{f}_i$ converges to a unique $\tl{f} \in \rC(X,R)$, which satisfies the desired property because $\n{f}_{\rC(K,R)} + 3^{-1} \epsilon + \sum_{i=1}^{\infty} 3^{-(n+1)} 4 \epsilon = \n{f}_{\rC(K,R)} + \epsilon$.
\end{proof}

\begin{proof}[Proof of Theorem \ref{closed immersion with projective kernel}]
We denote by $C \in \Chba_{\cC}(\rC(X,R))$ the chain complex
\begin{eqnarray*}
\cdots \to 0 \to I_{X,R,K} \to \rC(X,R) \to 0 \to \cdots
\end{eqnarray*}
of $\Mod_{\cC}(\rC(X,R))$, and by $f$ the morphism $C \to \rC(K,R)$ in $\Chba_{\cC}(\rC(X,R))$ associated to $\pi_{X,R,K}$. By Lemma \ref{Tietze for projective setting}, $\pi_{X,K,R}$ is an epimorphism in $\Alg_{\cC}(\rC(X,R))$ and $f$ is a quasi-isomorphism. By Proposition \ref{projectivity of R}, Lemma \ref{idempotent approximation}, and Lemma \ref{projectivity of I}, $C$ is termwise projective, and hence is termwise strongly flat by \cite{BK20} Proposition 3.11. We consider the commutative diagram
\begin{eqnarray*}
\xymatrix{
\Tot(P_{\cC}^{\rC(X,R)}(\rC(K,R)) \wh{\otimes}_{\rC(X,R)} P_{\cC}^{\rC(X,R)}(C)) \ar[r] \ar[d]^{\Tot(P_{\cC}^{\rC(X,R)}(\rC(K,R)) \wh{\otimes}_{\rC(X,R)} f)} & \rC(K,R) \wh{\otimes}_{\rC(X,R)} C \ar[d]^{\rC(K,R) \wh{\otimes}_{\rC(X,R)} f} \\
\Tot(P_{\cC}^{\rC(X,R)}(\rC(K,R)) \wh{\otimes}_{\rC(X,R)} P_{\cC}^{\rC(X,R)}(\rC(K,R))) \ar[r] & \rC(K,R) \wh{\otimes}_{\rC(X,R)} \rC(K,R)
}
\end{eqnarray*}
in $\Chba_{\cC}(\rC(X,R))$. Since $f$ is a quasi-isomorphism and $C$ is termwise strongly flat, the top horizontal arrow and the left vertical arrow are quasi-isomorphisms by Proposition \ref{derived symmetric monoidal structure} (iii) and (iv). We have $\rC(K,R) \wh{\otimes}_{\rC(X,R)} I_{X,R,K} = \ens{0}$ by Lemma \ref{idempotence of I for projective setting} and Lemma \ref{Idempotence implies the vanishing}, and hence the right vertical arrow is an isomorphism in $\Chba_{\cC}(\rC(X,R))$. This implies that the bottom horizontal arrow represents an isomorphism in $\Derba_{\cC}(\rC(X,R))$. Therefore the assertion follows from Proposition \ref{flat epimorphism is homotopy epimorphism}.
\end{proof}

Theorem \ref{closed immersion with projective kernel} has the following consequences.

\begin{crl}
If $X$ is a totally disconnected and first countable and $R$ has a bounded factorisation system, then $\pi_{X,R,\ens{x}}$ is a homotopy epimorphism in $\Mod_{\cC}(R)$ for any $x \in X$.
\end{crl}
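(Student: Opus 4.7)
The plan is to verify the hypotheses of Theorem \ref{closed immersion with projective kernel} for the closed subset $K = \{x\} \subset X$, from which the conclusion follows immediately. Note that the only hypothesis of the theorem not already in our assumptions is that $K$ be a $G_\delta$ set of $X$; all the other hypotheses (total disconnectedness of $X$, existence of a bounded factorisation system on $R$) are part of the hypothesis of this corollary.

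First, I would check that $\{x\}$ is closed in $X$. Since $X$ is compact Hausdorff (in particular $T_1$) by the standing assumptions of the section, every singleton is closed. Next, I would show that $\{x\}$ is a $G_\delta$ set. By first countability, there is a countable neighbourhood basis $(U_n)_{n \in \N}$ of $x$ consisting of open subsets of $X$. I claim $\bigcap_{n \in \N} U_n = \{x\}$. The inclusion $\{x\} \subset \bigcap_{n \in \N} U_n$ is clear. Conversely, for any $y \in X$ with $y \neq x$, the Hausdorff axiom provides an open neighbourhood $V$ of $x$ with $y \notin V$, and by the basis property there is some $n \in \N$ with $U_n \subset V$; hence $y \notin U_n$, and so $y \notin \bigcap_{n \in \N} U_n$. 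This shows $\{x\}$ is a $G_\delta$ set of $X$.

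Finally, applying Theorem \ref{closed immersion with projective kernel} with $K = \{x\}$ yields that $\pi_{X,R,\{x\}}$ is a homotopy epimorphism in $\Alg_{\cC}(R)$, as desired.

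There is no substantive obstacle here; the corollary is essentially a direct specialisation of the theorem, with the sole routine observation that first countability plus Hausdorffness upgrades any closed singleton to a $G_\delta$ set.
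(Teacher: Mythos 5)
Your proof is correct and follows exactly the same route as the paper: observe that first countability plus the Hausdorff property makes $\ens{x}$ a closed $G_{\delta}$ set, and then invoke Theorem \ref{closed immersion with projective kernel}. Your write-up is in fact slightly more explicit than the paper's (you spell out why the intersection of the neighbourhood basis is exactly $\ens{x}$), but the argument is the same.
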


\begin{proof}
Since $X$ is first countable, $x$ admits a countable fundamental system $\cU$ of neighbourhoods. Since $X$ is Hausdorff, $\cap_{U \in \cU} U = \ens{x}$. This implies that $\ens{x}$ is a $G_{\delta}$ set of $X$. Therefore, the assertion immediately follows Theorem \ref{closed immersion with projective kernel}.
\end{proof}

\begin{crl}
If $X$ is totally disconnected, $R$ has a bounded factorisation system, and there exists a pair $(R',f)$ of a Banach ring $R'$ and a continuous map $f \colon X \to R'$ such that $f^{-1}(\ens{0}) = K$, then $\pi_{X,R,K}$ is a homotopy epimorphism in $\Mod_{\cC}(R)$.
\end{crl}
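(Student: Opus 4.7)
The plan is to reduce the statement to Theorem \ref{closed immersion with projective kernel}. Two of the three hypotheses of that theorem, namely the total disconnectedness of $X$ and the existence of a bounded factorisation system on $R$, are already part of the present assumptions, so the only task is to verify that $K$ is a $G_\delta$ set of $X$, after which Theorem \ref{closed immersion with projective kernel} applies directly.

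First, I would observe that $R'$ is a metric space, since a Banach ring carries a complete norm $\n{\cdot}_{R'}$ and hence the distance $(a,b) \mapsto \n{a - b}_{R'}$. In particular, $\ens{0} \subset R'$ is a $G_\delta$ set of $R'$: explicitly,
\[
\ens{0} = \bigcap_{n=1}^{\infty} V_n, \qquad V_n \coloneqq \set{a \in R'}{\n{a}_{R'} < 1/n},
\]
with each $V_n$ an open neighbourhood of $0 \in R'$. This step uses nothing beyond the fact that the topology of $R'$ is given by the norm.

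Second, by the continuity of $f$, each preimage $f^{-1}(V_n)$ is open in $X$, and pulling back the displayed identity through $f$ yields
\[
K = f^{-1}(\ens{0}) = \bigcap_{n=1}^{\infty} f^{-1}(V_n).
\]
Thus $\set{f^{-1}(V_n)}{n \in \N \setminus \ens{0}}$ is a non-empty countable family of open subsets of $X$ whose intersection is exactly $K$, so $K$ is a $G_\delta$ set of $X$.

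With $K$ now known to be a $G_\delta$ set of $X$, all hypotheses of Theorem \ref{closed immersion with projective kernel} are met and I would invoke it to conclude that $\pi_{X,R,K}$ is a homotopy epimorphism in $\Alg_{\cC}(R)$. There is no essential obstacle here: the entire content of the corollary is the remark that the fibre over $0$ of any continuous map into a Banach ring is automatically $G_\delta$, since the norm topology on the target is metric.
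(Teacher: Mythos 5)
Your proof is correct and is essentially the paper's own argument: the paper likewise writes $K = \bigcap_{n \in \N} \set{x \in X}{\n{f(x)}_{R'} < 2^{-n}}$ to see that $K$ is a $G_{\delta}$ set and then invokes Theorem \ref{closed immersion with projective kernel}. Your presentation, which first notes that $\ens{0}$ is $G_{\delta}$ in the metric space $R'$ and then pulls back along $f$, is the same computation phrased slightly differently.
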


\begin{proof}
By $K = \cap_{n \in \N} \set{x \in X}{\n{f(x)}_{R'} < 2^{-n}}$, $K$ is a $G_{\delta}$ set of $X$. Therefore, the assertion immediately follows Theorem \ref{closed immersion with projective kernel}.
\end{proof}

\subsection{Flatness criterion}
\label{Flatness criterion}

Thirdly, we deal with a setting for which we will show that $I_{X,R,K}$ is a flat object of $\Mod_{\cC}(\rC(X,R))$. We abbreviate $(\cC,R)$ to $\cD_2$ and $(\cC,X,R)$ to $\cD_3$. We say that $\cD_2$ is an {\it admissible pair} if it satisfies the following:
\begin{itemize}
\item[(i)] If $\cC$ is $\BAb$, then $R$ is $\R$ or $\C$ equipped with $\v{\cdot}_{\infty}$.
\item[(ii)] If $\cC$ is $\NBAb$, then $R$ is a finite field equipped with the trivial valuation or a local field.
\end{itemize}
In particular, in this case, $\pi_{X,R,K} \colon \rC(X,R) \to \rC(K,R)$ is a generalisation of the morphism between $C^*$-algebras associated to a closed immersion. We say that $\cD_3$ is an {\it admissible triple} if it satisfies the following:
\begin{itemize}
\item[(i)] The pair $\cD_2$ is an admissible pair.
\item[(ii)] If $\cC$ is $\NBAb$, then $X$ is totally disconnected.
\end{itemize}
We give an analogue of Theorem \ref{closed immersion with projective kernel}.

\begin{thm}
\label{closed immersion with flat kernel}
If $\cD_3$ is an admissible triple, then $\pi_{X,R,K}$ is a homotopy epimorphism in $\Alg_{\cC}(R)$.
\end{thm}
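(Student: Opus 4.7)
The plan is to follow the template of the proof of Theorem \ref{closed immersion with projective kernel}, replacing the projectivity of $I_{X,R,K}$ by strong flatness while preserving the vanishing $\rC(K,R) \wh{\otimes}_{\rC(X,R)} I_{X,R,K} = 0$. Once I have established (a) that $\pi_{X,R,K}$ is a strict epimorphism, so that the two-term complex $C = [I_{X,R,K} \to \rC(X,R)]$ is quasi-isomorphic to $\rC(K,R)$ in $\Chba_{\cC}(\rC(X,R))$, (b) that $I_{X,R,K}^2 = I_{X,R,K}$, and (c) that $I_{X,R,K}$ is a strongly flat object of $\Mod_{\cC}(\rC(X,R))$, the identical diagram chase from the end of the proof of Theorem \ref{closed immersion with projective kernel}---now invoking Proposition \ref{derived symmetric monoidal structure} (iii), (iv) directly for the termwise strongly flat $C$, and using Lemma \ref{Idempotence implies the vanishing} via (b) to make the right vertical arrow an isomorphism---shows that $\rC(K,R) \wh{\otimes}_{\rC(X,R)}^{\bL} \rC(K,R) \to \rC(K,R)$ is an isomorphism in $\Derba_{\cC}(\rC(X,R))$. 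Proposition \ref{flat epimorphism is homotopy epimorphism} then delivers the homotopy epimorphism.

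For (a), the classical Tietze extension theorem handles $R = \R$ or $\C$, and Lemma \ref{Tietze for projective setting} handles the non-Archimedean case (where $X$ is totally disconnected by admissibility of $\cD_3$). For (b), every admissible $R$ carries a bounded factorisation system by Example \ref{bounded factrisation system} (i)--(iii), lifted to $\rC(X,R)$ with $I_{X,R,K}$ preserved by (iv). When $R$ is $\R$ or $\C$ we have $V_X = \rC(X,R)$, so every $f \in I_{X,R,K}$ factors at once as $D_{X,0}(f)\,D_{X,1}(f) \in I_{X,R,K}^2$. When $\cC = \NBAb$, admissibility forces $X$ to be totally disconnected; given $f \in I_{X,R,K}$, the open neighbourhood $f^{-1}(V_X)$ of $K$ contains a clopen $U \supset K$ by Lemma \ref{ultranormality}, and then the characteristic-function decomposition of Lemma \ref{idempotence of I for projective setting} applied to this $U$ (now without any $G_{\delta}$ hypothesis on $K$) produces $f \in I_{X,R,K}^2$.

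The main obstacle is step (c): strong flatness of $I_{X,R,K}$ is genuinely more delicate than the projectivity statement of \S \ref{Projectivity criterion}, because neither the $G_{\delta}$ hypothesis on $K$ nor the unrestricted availability of characteristic functions of clopen sets approximating $K$ is present here. I would pursue this by either of two routes. The direct route is to work with the functorial projective resolution $P_{\cC}^{\rC(X,R)}(I_{X,R,K})$, which is termwise strongly flat by Proposition \ref{projectivity of R}, and to show that it remains strictly exact after $(\cdot) \wh{\otimes}_{\rC(X,R)} M$ for any $M \in \Mod_{\cC}(\rC(X,R))$, using the identities $f = D_{X,0}(f)\,D_{X,1}(f)$ (possibly combined with Lemma \ref{ultranormality} in the non-Archimedean case) to exhibit explicit contracting null-homotopies that survive the tensor product. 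The dual route is to appeal to Proposition \ref{dual of flatness}: the admissible choices of $R$ are precisely those for which $R_1$ is faithfully strongly internally injective in $\Mod_{\cC}(R)$ (a Hahn--Banach-type statement for $\R$, $\C$, finite fields, and local fields), so strong flatness of $I_{X,R,K}$ reduces to strong internal injectivity of $I_{X,R,K}^{\vee}$, which is an extension-of-bounded-functionals problem approachable through Riesz--Markov-type measure theory in the Archimedean case and through density of locally constant functions together with Ingleton's theorem in the non-Archimedean case.
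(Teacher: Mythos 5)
Your overall architecture is sound, and steps (a) and (b) are correctly handled. Your route through the two-term complex $[I_{X,R,K} \to \rC(X,R)]$ together with the vanishing of $\rC(K,R) \wh{\otimes}_{\rC(X,R)} I_{X,R,K}$ is a legitimate variant of the paper's argument, which instead proves that the quotient $\rC(X,R)/I_{X,R,K} \cong \rC(K,R)$ is itself strongly flat and resolves it directly, so that neither the idempotence of $I_{X,R,K}$ nor the two-term complex is needed in the final diagram chase. But the proof is not complete: step (c), which carries essentially all of the technical content of the theorem, is left as a sketch, and neither of your two proposed routes is carried far enough to close it.

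The paper's mechanism for (c) is the one your ``dual route'' gestures at, but the decisive ingredient is missing from your account. After reducing, via Proposition \ref{dual of flatness} and the faithful strong internal injectivity of $R_1$ (Hahn--Banach, resp.\ Ingleton), to showing that $I_{X,R,K}^{\vee}$ is strongly internally injective, the paper exhibits $I_{X,R,K}^{\vee}$ as a retract of $\rC(X,R)^{\vee}$ and invokes Lemma \ref{section preserves flatness}. The retraction is built from a \emph{bounded approximate unit} $(U,\leq_U)$ of $I_{X,R,K}$ by setting $\tl{\mu}(f) \coloneqq \lim_{\wh{F}} \mu(uf)$ along an ultrafilter refining the order filter on $U$ --- a construction that is bounded, linear, and $\rC(X,R)$-linear in $\mu$, and whose convergence uses the compactness of closed discs of $R$; this is precisely where the admissibility restriction on $R$ (to $\R$, $\C$, finite fields, local fields) enters. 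Your substitutes do not obviously supply this: a Hahn--Banach/Ingleton extension of an individual functional on $I_{X,R,K}$ is not canonical, so it does not assemble into a bounded \emph{linear} (let alone module-linear) section $I_{X,R,K}^{\vee} \to \rC(X,R)^{\vee}$, which is what the retract argument requires; the Riesz--Markov extension-by-zero of measures could plausibly be made to work for $R = \R, \C$ but has no stated non-Archimedean counterpart; and the ``explicit contracting null-homotopies'' of your direct route are not exhibited. So the argument as written has a genuine gap at exactly the point where the hypothesis on $R$ must be used.
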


\begin{rmk}
\label{rational subsets}
We note that Theorem \ref{closed immersion with flat kernel} implies that Gerritzen--Grauert theorem does not hold in our setting unlike the classical rigid geometry, i.e.\ a homotopy epimorphism does not necessarily correspond to a finite union of rational subsets. For example, consider the case $X = \beta \N \setminus \N$. Since $\pi_{\beta \N,R,X}$ is surjective, as we will show in Lemma \ref{flatness of C(K,R)}, the cardinality of $\rC(X,R)$ is bounded by (and in fact equal to) $\aleph$ by the separability of $\beta \N$. This implies that the cardinality of the set of rational subsets of $X$ is bounded by $\aleph$. On the other hand, the cardinality of $X$ is $2^{\aleph}$, and hence admits a closed subset of cardinality $1$ which is not a rational subset.
\end{rmk}

In order to prove Theorem \ref{closed immersion with flat kernel}, we prepare some lemmata.

\begin{lmm}
\label{faithful injectivity of R}
If $\cD_2$ is an admissible pair, then $R_1$ is a faithfully strongly internally injective object of $\Mod_{\cC}(R)$.
\end{lmm}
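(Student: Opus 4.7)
The plan is to deduce the assertion from the appropriate form of the Hahn--Banach extension theorem in each branch of the admissible-pair dichotomy. Unravelling the definition of faithful strong internal injectivity, one needs to show that for every morphism $\phi \colon M_0 \to M_1$ in $\Mod_{\cC}(R)$, with cokernel $C = M_1/\overline{\phi(M_0)}$, the dualised sequence $0 \to C^{\vee} \to M_1^{\vee} \to M_0^{\vee}$ is a strict kernel sequence in $\Mod_{\cC}(R)$, and conversely that any morphism $\psi \colon M_1 \to N$ with $\psi \circ \phi = 0$ whose dual sequence is a strict kernel agrees with the cokernel of $\phi$.

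The key analytic ingredient is norm-preserving extension of bounded linear functionals from a closed submodule to the ambient Banach $R$-module. In the archimedean case this is the classical Hahn--Banach theorem for $\R$- or $\C$-Banach spaces. In the non-archimedean case $R$ is either a finite field equipped with the trivial absolute value or a local field, and in either situation $R$ is a spherically complete non-archimedean valued field, so Ingleton's version of Hahn--Banach provides the desired norm-preserving extension for non-archimedean Banach $R$-modules. Granting this input, preservation of the cokernel reduces to a duality computation: bounded functionals on $M_1$ vanishing on $\phi(M_0)$ automatically vanish on $\overline{\phi(M_0)}$ by continuity, so they correspond bijectively to bounded functionals on $C$, and the norm-preserving property of the extension theorem together with the definition of the quotient norm on $C$ upgrades this bijection to an isometric isomorphism $C^{\vee} \cong \ker(\phi^{\vee})$ in $\Mod_{\cC}(R)$. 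Reflection is handled by the separating form of Hahn--Banach: a closed $R$-submodule of a Banach $R$-module equals the whole module if and only if no nonzero bounded functional annihilates it. Using this, the canonical factorisation $\bar\psi \colon C \to N$ induced by a $\psi$ as above is shown to be both a bijective isometry and strict, hence an isomorphism in $\Mod_{\cC}(R)$.

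The main obstacle lies in the quasi-abelian bookkeeping: the cokernel in $\Mod_{\cC}(R)$ involves the closure $\overline{\phi(M_0)}$ rather than $\phi(M_0)$ itself, and strict exactness of the dualised sequence requires the kernel norm on $\ker(\phi^{\vee})$, inherited from $M_1^{\vee}$, to match the operator norm on $C^{\vee}$ computed from the quotient norm on $C$. The isometric character of both the classical and Ingleton versions of Hahn--Banach is precisely what makes these two norms coincide, and this is the conceptual reason why the admissibility hypothesis restricts the non-archimedean base $R$ to spherically complete fields rather than allowing an arbitrary complete valuation field.
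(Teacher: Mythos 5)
You have the analytic inputs exactly right: the classical Hahn--Banach theorem in the Archimedean case, Ingleton's theorem in the non-Archimedean case, and the observation that what is really used about $R$ is spherical completeness (the paper makes the same point in the remark following its proof). The difficulty is that both halves of your unravelling of the definition are dualised the wrong way round. For the ``preserves'' half, the isometric identification $C^{\vee} \cong \ker(\phi^{\vee})$ for $C = \coker(\phi)$ that you describe is the automatic part --- it is the left strong exactness of $\uHom_{\Mod_{\cC}(R)}(\cdot,R_1)$ in its first argument, valid for any coefficient object and requiring no extension theorem, since a functional on $M_1$ killing $\phi(M_0)$ kills its closure and the quotient norm matches operator norms on the nose. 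The content of strong internal injectivity lies in the opposite direction, where a strictly exact sequence $0 \to M_0 \to M_1 \to M_2$ must dualise to a strictly coexact one; this is where norm-controlled extension of functionals from the closed submodule $M_0 = \ker(M_1 \to M_2)$ enters. For the ``reflects'' half, the property defined in the paper and used in Proposition \ref{dual of flatness} is: if $(M_0^{\vee},f^{\vee})$ is a cokernel of $g^{\vee}$, then $(M_0,f)$ is a kernel of $g$. What you prove --- if $\psi^{\vee}$ is a kernel of $\phi^{\vee}$ then $\psi$ is a cokernel of $\phi$ --- is reflection of kernels, a different statement; it is the cokernel-reflecting direction that converts strict coexactness of the dual complex back into strict exactness of the original left-exact-type complex in the flatness/injectivity duality.

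The gap is bridgeable with the tools you already invoke, but the bridge is missing. Your reflection statement specialised to $\phi = 0$ says that $(\cdot)^{\vee}$ reflects isomorphisms; combining this with the correctly oriented preservation property and the comparison morphism $M_0 \to \ker(g)$ induced by $g \circ f = 0$ would recover the required implication. The paper argues more directly: it first deduces that $f$ is a strict monomorphism from the fact that the biduality transformation $\id \Rightarrow (\cdot)^{\vee\vee}$ consists of isometries, so that $\im_{\Ab}(f)$ is closed, and then proves $\im_{\Ab}(f) = \ker(g)$ by two Hahn--Banach separation arguments: one uses a functional on $M_2$ detecting $g(f(m_0)) \neq 0$ against $f^{\vee} \circ g^{\vee} = 0$, the other separates an $m \in \ker(g)$ from the closed submodule $\im_{\Ab}(f)$ and derives a contradiction from the universal property of the cokernel $(M_0^{\vee},f^{\vee})$. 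As written, your argument establishes statements adjacent to, but not identical with, the ones the lemma asserts and the rest of the paper consumes.
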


\begin{proof}
The strong internal injectivity follows from Hahn--Banach theorem (cf.\ \cite{Ing52} Theorem 3 or \cite{Sch02} Proposition 9.2 for the non-Archimedean setting). Therefore, it suffices to show that for any sequence $M_0 \stackrel{f}{\to} M_1 \stackrel{g}{\to} M_2$ in $\Mod_{\cC}(R)$ such that the pair $(M_0^{\vee},f^{\vee})$ satisfies the universality of the cokernel of $g^{\vee}$ in $\Mod_{\cC}(R)$, the pair $(M_0,f)$ satisfies the universality of the kernel of $g$ in $\Mod_{\cC}(R)$. By the right strong exactness of $(\cdot)^{\vee}$ and the well-known consequence of Hahn--Banach theorem that the canonical natural transformation $\id_{\Mod_{\cC}(R)} \Rightarrow (\cdot)^{\vee \vee}$ consists of isometries, $f$ is a strict monomorphism, and hence the set-theoretic image $\im_{\Ab}(f)$ of $f$ is closed. Therefore it suffices to show that $\ker(g) = \im_{\Ab}(f)$.

\vspace{0.1in}
First, let $m \in \im_{\Ab}(f)$. Assume $m \notin \ker(g)$. Then, by Hahn--Banach theorem, there exists a $\mu \in M_2^{\vee}$ such that $\mu(g(m)) \neq 0$. Take an $m_0 \in M_0$ with $f(m_0) = m$. By $f^{\vee} \circ g^{\vee} = 0$, we have $0 = (f^{\vee} \circ g^{\vee})(\mu)(m_0) = g^{\vee}(\mu)(f(m_0)) = g^{\vee}(\mu)(m) = \mu(g(m))$, which contradicts $\mu(g(m)) \neq 0$. Therefore we obtain $m \in \ker(g)$. This implies $\im_{\Ab}(f) \subset \ker(g)$. 

\vspace{0.1in}
Next, let $m \in \ker(g)$. Assume $m \notin \im_{\Ab}(f)$. Then, by the closedness of $\im_{\Ab}(f)$ and Hahn--Banach theorem, there exists a $\mu \in M_1^{\vee}$ such that $\mu(m) \neq 0$ and $f^{\vee}(\mu) = 0$. We denote by $i$ the inclusion $\ker(g) \hookrightarrow M_1$. By $g \circ i = 0$, we have $i^{\vee} \circ g^{\vee} = 0$. By the universality of $(M_0^{\vee},f^{\vee})$ as the cokernel of $g^{\vee}$, there exists a unique $p_f \in \Hom_{\Mod_{\cC}(R)}(M_0^{\vee},\ker(g)^{\vee})$ such that $i^{\vee} = p_f \circ f^{\vee}$. We have $\mu(m) = i^{\vee}(\mu)(m) = (p_f \circ f^{\vee})(\mu)(m) = p_f(f^{\vee}(\mu))(m) = p_f(0)(m) = 0$, which contradicts $\mu(m) \neq 0$. Therefore, we obtain $m \in \im_{\Ab}(f)$. This implies $\ker(g) = \im_{\Ab}(f)$.
\end{proof}

We note that the assumptions in Lemma \ref{faithful injectivity of R} can be weakened, because the proof just requires $R$ to be a spherically complete valuation field in the non-Archimedean setting. We show an analytic counterpart of the stability by direct summands of the notion of injectivity.

\begin{lmm}
\label{section preserves flatness}
Let $(I_0, I_1) \in \Mod_{\cC}(R)^2$ admitting an $i \in \Hom_{\Mod_{\cC}(R)}(I_0,I_1)$ and a $\pi \in \Hom_{\Mod_{\cC}(R)}(I_1,I_0)$ with $\pi \circ i = \id_{I_0}$. If $I_1$ is strongly internally injective, then so are $I_0$ and $\ker(\pi)$.
\end{lmm}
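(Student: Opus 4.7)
The plan is to realize $I_0$ and $\ker(\pi)$ as direct summands of $I_1$ in $\Mod_{\cC}(R)$ and then transfer the strong internal injectivity of $I_1$ to each summand via additivity of $\uHom$ in the second argument.

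First, I would construct the explicit direct sum decomposition. The composition $e \coloneqq i \circ \pi \colon I_1 \to I_1$ is idempotent, $e^2 = i \circ \pi \circ i \circ \pi = i \circ \pi = e$, and $\id_{I_1} - e$ projects $I_1$ onto $\ker(\pi)$ because $\pi \circ (\id - e) = \pi - \pi \circ i \circ \pi = 0$ and $(\id - e)|_{\ker(\pi)} = \id$. Packaging this, the bounded $R$-linear maps
\[
\alpha \colon I_0 \oplus \ker(\pi) \to I_1, \ (x,y) \mapsto i(x)+y, \quad \beta \colon I_1 \to I_0 \oplus \ker(\pi), \ z \mapsto (\pi(z), z - i(\pi(z))),
\]
are mutually inverse morphisms of $\Mod_{\cC}(R)$, so $I_1 \cong I_0 \oplus \ker(\pi)$.

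Second, I would invoke the additivity of $\uHom_{\Mod_{\cC}(R)}(\cdot, \cdot)$ in the second argument to obtain a natural isomorphism of functors $\Mod_{\cC}(R)^{\op} \to \Mod_{\cC}(R)$:
\[
\uHom_{\Mod_{\cC}(R)}(\cdot, I_1) \;\cong\; \uHom_{\Mod_{\cC}(R)}(\cdot, I_0) \;\oplus\; \uHom_{\Mod_{\cC}(R)}(\cdot, \ker(\pi)).
\]
Under this isomorphism, $\uHom(\cdot, i)$ and $\uHom(\cdot, \pi)$ exhibit $\uHom(\cdot, I_0)$ as a retract of $\uHom(\cdot, I_1)$, and analogously for $\ker(\pi)$ via $\id_{I_1} - i\pi$.

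Finally, I would use the characterization stated in the paper: strong internal injectivity of $M$ is equivalent to $\uHom(\cdot, M)$ preserving the cokernel of every (not necessarily strict) morphism $f \colon N_0 \to N_1$ in $\Mod_{\cC}(R)$. Since cokernels in $\Mod_{\cC}(R)$ commute with finite direct sums, applying the decomposition above to $\uHom(f, I_1)$ yields a direct sum decomposition of cokernels, and the strict exactness of the $I_1$-piece forces strict exactness of each direct summand. Hence $\uHom(\cdot, I_0)$ and $\uHom(\cdot, \ker(\pi))$ preserve cokernels, establishing the result.

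The only subtle point, and the main obstacle, is the last step: one must verify that the preservation of cokernels of arbitrary (non-strict) morphisms descends along a direct summand of functors. This reduces to the fact that in a quasi-Abelian category a direct summand of a strictly exact sequence is again strictly exact, which in turn follows from the additivity of the kernel/coimage/image/cokernel formation and of the canonical morphism $\coim \to \im$, so that the latter being an isomorphism for the total sum forces it to be an isomorphism on each summand.
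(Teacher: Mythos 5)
Your proof is correct, but it takes a genuinely different route from the paper's. The paper first reduces the claim for $\ker(\pi)$ to the claim for $I_0$ (since $\ker(\pi) \hookrightarrow I_1$ is itself split), and then argues directly with the retraction diagram $\Hom(\cdot,I_0) \to \Hom(\cdot,I_1) \to \Hom(\cdot,I_0)$: it disposes of the case $\pi = 0$ separately and, for $\pi \neq 0$, proves strict coexactness of the $I_0$-row by explicit operator-norm estimates (an $\epsilon$-argument showing the image of $G_0(g)$ is dense in $\ker(G_0(f))$, followed by a quantitative lifting argument with the constant $C_{\pi} C C_i$ showing $G_0(f)$ is a strict epimorphism). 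You instead package the data into the biproduct decomposition $I_1 \cong I_0 \oplus \ker(\pi)$, push it through the additivity of $\uHom$ in the second variable, and invoke the purely additive-categorical fact that kernels, cokernels, coimages, images, and the canonical morphism $\coim \to \im$ all commute with finite biproducts, together with the observation that a biproduct of morphisms is an isomorphism if and only if each factor is; this treats $I_0$ and $\ker(\pi)$ symmetrically in one stroke, needs no case distinction on $\pi$, and uses no norms. What the paper's argument buys is explicit quantitative control of the lifts within the normed framework; what yours buys is brevity and generality --- it is valid verbatim in any quasi-Abelian category with an additive internal hom, and the one ``subtle point'' you flag (strict exactness descends to direct summands of a sequence) is indeed exactly the lemma needed and holds for the reasons you give.
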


\begin{proof}
The claim for $\ker(\pi)$ follows from the claim for $I_0$ because the inclusion $\ker(\pi) \hookrightarrow I_1$ is a split monomorphism. If $\pi = 0$, then $I_0 = \ens{0}$ by $\pi \circ i = \id_{I_0}$, and hence $I_0$ is strongly internally injective. Therefore we may assume $\pi \neq 0$. Let $0 \to M_0 \stackrel{f}{\to} M_1 \stackrel{g}{\to} M_2$ be a strictly exact sequence in $\Mod_{\cC}(R)$. For each $j \in \ens{0,1}$, we abbreviate $\Hom_{\Mod_{\cC}(R)}(\cdot,I_j)$ to $G_j$. We consider the following commutative diagram in $\Mod_{\cC}(R)$:
\begin{eqnarray*}
\xymatrix{
G_0(M_2) \ar[r]^-{G_0(g)} \ar[d]^-{i \circ (\cdot)} & G_0(M_1) \ar[r]^-{G_0(f)} \ar[d]^-{i \circ (\cdot)} & G_0(M_0) \ar[r] \ar[d]^-{i \circ (\cdot)} & 0  \ar@{=}[d] \\
G_1(M_2) \ar[r]^-{G_1(g)} \ar[d]^-{\pi \circ (\cdot)} & G_1(M_1) \ar[r]^-{G_1(f)} \ar[d]^-{\pi \circ (\cdot)} & G_1(M_0) \ar[r] \ar[d]^-{\pi \circ (\cdot)} & 0  \ar@{=}[d] \\
G_0(M_2) \ar[r]^-{G_0(g)} & G_0(M_1) \ar[r]^-{G_0(f)} & G_0(M_0) \ar[r] & 0.
}
\end{eqnarray*}
The composites of vertical arrows are identities by $\pi \circ i = \id_{I_0}$. The second horizontal sequence is strictly coexact by the strong internal injectivity of $I_1$. We show the exactness of the first horizontal sequence at $G_0(M_1)$. For this purpose, it suffices to show that for any $(\mu,\epsilon) \in \ker(G_0(f)) \times (0,\infty)$, there exists a $\tl{\mu} \in G_0(M_2)$ such that $\n{\mu - \tl{\mu} \circ g}_{G_0(M_1)} < \epsilon$. Put $C_{\pi} \coloneqq \n{\pi}_{\Hom_{\Mod_{\cC}(R)}(I_1,I_0)} \in [0,\infty)$. By $\pi \neq 0$, we have $C_{\pi} \neq 0$. By the strict coexactness of the second horizontal sequence, there exists a $\tl{\nu} \in G_1(M_2)$ such that $\n{i \circ \mu - \tl{\nu} \circ g}_{G_1(M_1)} < C_{\pi}^{-1} \epsilon$. Put $\tl{\mu} \coloneqq \pi \circ \tl{\nu} \in G_0(M_2)$. We have
\begin{eqnarray*}
\n{\mu - \tl{\mu} \circ g}_{G_0(M_1)} = \n{\pi \circ i \circ \mu - \pi \circ \tl{\nu} \circ g}_{G_0(M_1)} \leq C_{\pi} \n{i \circ \mu - \tl{\nu} \circ g}_{G_0(M_1)} < \epsilon.
\end{eqnarray*}
Therefore the first horizontal sequence is exact at $G_0(M_1)$.

\vspace{0.1in}
We show that $G_0(f)$ is a strict epimorphism. By the strict coexactness of the second horizontal sequence, $G_1(f)$ is a strict epimorphism. Therefore we obtain a $C \in [0,\infty)$ satisfying the property that for any $\nu \in G_1(M_0)$, there exists a $\tl{\nu} \in G_1(M_1)$ such that $\tl{\nu} \circ f = \nu$ and $\n{\tl{\nu}}_{G_1(M_1)} \leq C \n{\nu}_{G_1(M_0)}$. Put $C_i \coloneqq \n{i}_{\Hom_{\Mod_{\cC}(R)}(I_0,I_1)} \in [0,\infty)$. In order to show that $G_0(f)$ is a strict epimorphism, it suffices to show that for any $\mu \in G_0(M_0)$, there exists a $\tl{\mu} \in G_0(M_1)$ such that $\tl{\mu} \circ f = \mu$ and $\n{\tl{\mu}}_{G_0(M_1)} \leq C_{\pi} C C_i \n{\mu}_{G_0(M_0)}$. By the definition of $C$, there exists a $\tl{\nu} \in G_1(M_1)$ such that $\tl{\nu} \circ f = i \circ \mu$ and $\n{\tl{\nu}}_{G_1(M_1)} < C \n{i \circ \mu}_{G_1(M_0)}$. Put $\tl{\mu} \coloneqq \pi \circ \tl{\nu}$. We have $\tl{\mu} \circ f = \pi \circ \tl{\nu} \circ f = \pi \circ i \circ \mu = \mu$ and
\begin{eqnarray*}
\n{\tl{\mu}}_{G_0(M_1)} = \n{\pi \circ \tl{\nu}}_{G_0(M_1)} \leq C_{\pi} \n{\tl{\nu}}_{G_1(M_1)} \leq C_{\pi} C \n{i \circ \mu}_{G_1(M_0)} \leq C_{\pi} C C_i \n{\mu}_{G_0(M_0)}.
\end{eqnarray*}
Therefore $G_0(f)$ is a strict epimorphism. Thus the first horizontal sequence is strictly coexact.
\end{proof}

Let $I$ be a closed ideal of an $A \in \Alg_{\cC}(R)$. A {\it bounded approximate unit of $I$} is a directed set $(U,\leq_U)$ satisfying the following:
\begin{itemize}
\item[(i)] There exists a $C \in [0,\infty)$ such that $U \subset \set{u \in A}{\n{u}_A \leq C}$.
\item[(ii)] For any $(f,\epsilon) \in I \times (0,\infty)$, there exists a $u_0 \in U$ such that $\n{f - u f}_A < \epsilon$ for any $u \in U$ with $u_0 \leq_U u$.
\end{itemize}
We show a variant of \cite{Nem19} Proposition 3.2 in a way parallel to its original proof.

\begin{lmm}
\label{flatness of ideal admitting an approximate unit}
\begin{itemize}
\item[(i)] The regular $A$-module object is a strongly flat object of $\Mod_{\cC}(A)$.
\item[(ii)] If $\cD_2$ is an admissible pair and $I$ admits a bounded approximate unit, then $I$ and $A/I$ are strongly flat objects of $\Mod_{\cC}(A)$.
\end{itemize}
\end{lmm}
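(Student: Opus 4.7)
Part (i) is immediate from what has already been established in the excerpt: the regular $A$-module object $A_1$ is projective in $\Mod_\cC(A)$ by Proposition \ref{projectivity of R}, and in the proof of Proposition \ref{clopen subset is homotopy Zariski immersion} it was already used that projective objects are strongly flat via \cite{BK20} Proposition 3.11.

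For part (ii), my plan is to first prove strong flatness of $I$ by exploiting the bounded approximate unit $U$, and then to deduce strong flatness of $A/I$ from the strictly exact sequence $0 \to I \to A \to A/I \to 0$ via a snake-lemma-style argument in the quasi-Abelian category $\Mod_\cC(A)$. The key technical step is to show that for every $M \in \Mod_\cC(A)$, the scalar multiplication $I \wh{\otimes}_A M \to M$ induces an isomorphism in $\Mod_\cC(R)$ onto the closed submodule $\ol{I M} \subset M$ generated by products $fm$. Surjectivity is automatic, so the content is injectivity: given a representative $\xi = \sum_j f_j \otimes m_j$ (with convergence in the tensor seminorm) whose multiplication image vanishes, I will pick an approximate unit element $u \in U$ with $\n{u f_j - f_j}_A$ uniformly small---a Cohen-factorization-type simultaneous approximation that uses the uniform boundedness of $U$---and then exploit the $A$-tensor relation $(u f_j) \otimes m_j = u \otimes (f_j m_j)$ to rewrite $\xi \approx u \otimes \sum_j f_j m_j = u \otimes 0 = 0$. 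Once this identification is in hand, strong flatness of $I$ follows: for any morphism $\phi \colon M_0 \to M_1$ in $\Mod_\cC(A)$, not necessarily strict, kernel preservation reduces to the identity $\ker(\phi) \cap \ol{I M_0} = \ol{I \ker(\phi)}$, which follows from the same approximate-unit toolkit because any $m \in \ol{I M_0}$ with $\phi(m) = 0$ satisfies $u_\alpha m \in I \cdot \ker(\phi)$ (as $\phi(u_\alpha m) = u_\alpha \phi(m) = 0$) and $u_\alpha m \to m$ by the approximate-unit property.

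The admissibility hypothesis on $\cD_2$ enters principally through Lemma \ref{faithful injectivity of R} and Proposition \ref{dual of flatness}: these permit a Hahn--Banach separation argument to take over wherever the direct series manipulation above proves insufficient to handle the ``strong'' aspect of strong flatness, namely the preservation of kernels of non-strict morphisms. The principal obstacle I anticipate is the bookkeeping of the tensor seminorm in the Cohen-type factorization step---ensuring that the simultaneous approximation by a single approximate-unit element respects convergence of the full completed sum rather than merely of a finite truncation. This is the technical heart of the variant of \cite{Nem19} Proposition 3.2 that the lemma cites, and it is also where the non-Archimedean case will diverge slightly from the Archimedean one (the $\ell^\infty$-style tensor seminorm makes the estimates cleaner for $\cC = \NBAb$, whereas for $\cC = \BAb$ the $\ell^1$-style seminorm requires a slightly more delicate sum-of-norms control).
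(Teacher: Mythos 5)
Your part (i) is exactly the paper's argument. For part (ii) you take a genuinely different route: the paper never computes $I \wh{\otimes}_A M$ directly. It dualises instead: by Proposition \ref{dual of flatness} and Lemma \ref{faithful injectivity of R} it suffices to show that $I^{\vee}$ and $(A/I)^{\vee}$ are strongly internally injective, and since $(A/I)^{\vee}$ is the kernel of the restriction $\pi \colon A^{\vee} \to I^{\vee}$, Lemma \ref{section preserves flatness} reduces both claims at once to producing a section $i \colon I^{\vee} \to A^{\vee}$ of $\pi$. That section is obtained by sending $\mu \in I^{\vee}$ to the functional $a \mapsto \lim_{\wh{F}} \mu(ua)$ along an ultrafilter refining the tails of the approximate unit; the limit exists because the closed discs of $R$ are compact. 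Your Cohen-factorisation computation that the multiplication $I \wh{\otimes}_A M \to M$ is a strict monomorphism onto the closed submodule $\ol{IM}$ is sound (the same approximation that kills a $\xi$ with vanishing image in fact bounds $\n{\xi}$ by a constant times the norm of its image), and it does yield $\ker(\phi) \cap \ol{IM_0} = \ol{I\ker(\phi)}$, so the claim for $I$ can plausibly be pushed through along your lines.

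The genuine gap is $A/I$. Strong flatness does not pass to quotients by a ``snake-lemma-style argument'': already algebraically $\Z/2\Z$ is not flat over $\Z$ although $2\Z$ and $\Z$ are, and in a quasi-Abelian category the snake lemma requires strictness hypotheses that the arbitrary (non-strict) morphism $\phi$ appearing in the definition of strong flatness does not supply. Concretely, under the identification $M \wh{\otimes}_A (A/I) \cong M/\ol{IM}$, what your reduction needs for $A/I$ is the identity $\phi^{-1}(\ol{IM_1}) = \ker(\phi) + \ol{IM_0}$ together with control of the quotient norms, and neither follows from the corresponding statements for $I$ and $A$; this is exactly the half that the paper's splitting argument on the dual side (the $\ker(\pi)$ clause of Lemma \ref{section preserves flatness}) is designed to capture. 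Relatedly, your sketch nowhere uses the compactness of the closed unit ball of $R$, whereas the paper remarks immediately after the lemma that this compactness (not mere spherical completeness) is precisely what its proof consumes; a proof of the full statement that never invokes it should be treated with suspicion. The closing appeal to ``a Hahn--Banach separation argument taking over where the series manipulation proves insufficient'' is where the actual content of the paper's proof lives, and it must be made precise rather than deferred.
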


\begin{proof}
The assertion (i) immediately follows from Proposition \ref{projectivity of R} and \cite{BK20} Proposition 3.11. We show the assertion (ii). By Proposition \ref{dual of flatness}, Lemma \ref{faithful injectivity of R}, and the assertion (i), $A^{\vee}$ is a strongly internally injective object of $\Mod_{\cC}(A)$ and it suffices to show that $I^{\vee}$ and $(A/I)^{\vee}$ are strongly internally injective objects of $\Mod_{\cC}(A)$. For this purpose, it suffices to construct a section $i \colon I^{\vee} \to A^{\vee}$ of the dual $\pi \colon A^{\vee} \to I^{\vee}$ of the inclusion $I \hookrightarrow A$ by Lemma \ref{section preserves flatness} applied to $(I_0,I_1) = (I^{\vee},A^{\vee})$, because the dual $(A/I)^{\vee} \to A^{\vee}$ of the canonical projection $A \twoheadrightarrow A/I$ satisfies the universality of the kernel of $\pi$.

\vspace{0.1in}
Take a bounded approximate unit $(U,\leq_U)$ of $I$. Put $C \coloneqq \sup_{u \in U} \n{u}_A \in [0,\infty)$. We denote by $F$ the set of subsets $S \subset U$ admitting a $u \in U$ with $\set{u' \in U}{u \leq_U u'} \subset S$. Since $F$ forms a filter, there exists an ultrafilter $\wh{F}$ of $U$ containing $F$. Let $\mu \in I^{\vee}$. For any $f \in A$, $(\mu(uf))_{u \in U}$ is a net in the closed disc of $R$ of radius $C \n{\mu}_{I^{\vee}} \n{f}_A$, and hence admits a unique $\wh{F}$-limit point $\tl{\mu}(a) \in R$ by the compactness and the Hausdorffness of closed unit balls of $R$. By definition, $\tl{\mu}$ is an $R$-linear homomorphism $A \to R$ whose operator norm is bounded by $C \n{\mu}_{I^{\vee}}$, and hence is an element of $A^{\vee}$. The map $i \colon I^{\vee} \to A^{\vee}$ assigning $\tl{\mu}$ to each $\mu \in I^{\vee}$ gives an $R$-linear homomorphism whose operator norm is bounded by $C$, and hence gives a morphism in $\Mod_{\cC}(R)$. For any $(\mu,f) \in I^{\vee} \times I$, we have
\begin{eqnarray*}
& & \n{\mu(f) - \tl{\mu}(f)}_R = \n{\mu(f) - \lim_{\wh{F}} \mu(uf)}_R = \n{\lim_{\wh{F}} (\mu(f) - \mu(uf))}_R \\
& = & \n{\lim_{\wh{F}} \mu(f - uf)}_R = \lim_{\wh{F}} \n{\mu(f - uf)}_R = 0
\end{eqnarray*}
because of $F \subset \wh{F}$ and the continuity of $\n{\cdot}_R$, and hence $(\pi \circ i)(\mu)(f) = i(\mu)(f) = \tl{\mu}(f) = \mu(f)$. This implies that $i$ is a section of $\pi$.
\end{proof}

We note that the proof of Lemma \ref{flatness of ideal admitting an approximate unit} (ii) explains the reason why Theorem \ref{closed immersion with flat kernel} requires in the non-Archimedean setting the condition that $R$ is a finite field equipped with the trivial valuation or a local field, which is strictly stronger than the condition that $R$ is spherically complete, because it has been used the fact that the closed unit ball of $R$ is compact.

\begin{lmm}
\label{bounded approximate unit of I}
If $\cD_2$ is an admissible pair, then $I_{X,R,K}$ admits a bounded approximate unit contained in the closed unit disc.
\end{lmm}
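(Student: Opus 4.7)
The plan is to produce explicit bounded approximate units separately in the Archimedean and non-Archimedean settings, in both cases using a Urysohn-type separation of $K$ from the closed set on which a given $f \in I_{X,R,K}$ is large in norm.

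Suppose first that $\cC = \BAb$, so $R$ is $\R$ or $\C$ with $\v{\cdot}_{\infty}$. I would let $U$ be the set of continuous maps $u \colon X \to [0,1] \subset R$ satisfying $u|_K = 0$, ordered pointwise; since $(u,u') \mapsto \max(u,u')$ again lies in $U$, this set is directed, every element has sup norm $\leq 1$, and condition (i) holds with $C = 1$. Given $f \in I_{X,R,K}$ and $\epsilon > 0$, the set $F_\epsilon \coloneqq \set{x \in X}{\n{f(x)}_R \geq \epsilon / 2}$ is closed and disjoint from $K$, so Urysohn's lemma (compact Hausdorff spaces are normal) produces $u_0 \in U$ with $u_0|_{F_\epsilon} \equiv 1$. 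For any $u \geq u_0$ in $U$, one still has $u|_{F_\epsilon} \equiv 1$ and $0 \leq u(x) \leq 1$ everywhere, so $(1-u) f$ vanishes on $F_\epsilon$ and has norm at most $\epsilon/2$ off $F_\epsilon$; hence $\n{f - u f}_{\rC(X,R)} \leq \epsilon/2 < \epsilon$, verifying condition (ii).

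Suppose next that $\cC = \NBAb$, so $R$ is a finite field with the trivial valuation or a local field; in particular $R$ is totally disconnected. Then every continuous map $X \to R$ is constant on each connected component of $X$, and the quotient $\pi \colon X \to Y$ onto the (totally disconnected, compact Hausdorff) space of connected components induces an isometric isomorphism $\pi^* \colon \rC(Y,R) \xrightarrow{\sim} \rC(X,R)$ identifying $I_{Y, R, \pi(K)}$ with $I_{X, R, K}$, where $\pi(K)$ is closed in $Y$ because $\pi$ is a closed map between compact Hausdorff spaces. This reduces the problem to the case that $X$ itself is totally disconnected. I would then take $U \coloneqq \set{\chi_W}{W \in \CO(X), \ W \cap K = \emptyset}$, directed by inclusion of the underlying clopens (since $\chi_W \vee \chi_{W'} = \chi_{W \cup W'}$ again belongs to $U$); each $\chi_W$ has norm $\leq 1$ and lies in $I_{X,R,K}$. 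For $f \in I_{X,R,K}$ and $\epsilon > 0$, Lemma \ref{ultranormality} applied to the disjoint closed sets $K$ and $F_\epsilon \coloneqq \set{x \in X}{\n{f(x)}_R \geq \epsilon}$ yields $W_0 \in \CO(X)$ with $F_\epsilon \subset W_0$ and $K \cap W_0 = \emptyset$. For any $W \supseteq W_0$ in the directed set, $\chi_W \equiv 1$ on $W$, so $f - \chi_W f$ vanishes there, while off $W$ one has $x \notin F_\epsilon$ and hence $\n{f(x)}_R < \epsilon$; thus $\n{f - \chi_W f}_{\rC(X,R)} < \epsilon$.

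The key technical inputs in each case are a Urysohn-type separation: the classical Urysohn lemma for compact Hausdorff (hence normal) spaces in the Archimedean setting, and Lemma \ref{ultranormality} for totally disconnected compact Hausdorff spaces in the non-Archimedean setting. I do not expect a substantive obstacle; the mildest subtlety is the reduction to totally disconnected $X$ in the non-Archimedean case, which is automatic because $R$ is itself totally disconnected and continuous maps $X \to R$ therefore factor through the quotient by connected components.
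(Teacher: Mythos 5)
Your proposal is correct and follows essentially the same route as the paper: in the Archimedean case the directed set of continuous $u \colon X \to [0,1]$ vanishing on $K$, and in the non-Archimedean case the characteristic functions of clopen sets disjoint from $K$ directed by inclusion (the paper merely asserts these form a bounded approximate unit, while you supply the Urysohn/ultranormality verification and the reduction to the totally disconnected quotient, both of which are sound).
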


\begin{proof}
When $\cC = \BAb$, we denote by $U \subset I_{X,R,K}$ the subset of continuous maps $u \colon X \to [0,1]$ with $\pi_{X,R,K}(u) = 0$ directed by the partial order $\leq_U$ given by the pointwise comparison. When $\cC = \NBAb$, we denote by $U \subset I_{X,R,K}$ the subset of continuous maps $u \colon X \to \ens{0,1}$ with $\pi_{X,R,K}(u) = 0$ directed by the partial order $\leq_U$ given by the inclusion of the corresponding clopen subsets. Then $(U,\leq_U)$ forms a bounded approximate unit of $I_{X,R,K}$ contained in the closed unit disc.
\end{proof}

We give a flat analogue of Lemma \ref{projectivity of I}.

\begin{lmm}
\label{flatness of I}
If $\cD_2$ is an admissible pair, then $I_{X,R,K}$ and $\rC(X,K)/I_{X,R,K}$ are strongly flat objects of $\Mod_{\cC}(\rC(X,R))$.
\end{lmm}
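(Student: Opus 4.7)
The statement is now essentially a direct corollary of the two immediately preceding lemmas, so my proposal is very short.

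The plan is to specialize Lemma \ref{flatness of ideal admitting an approximate unit}~(ii) to the pair $(A,I) = (\rC(X,R), I_{X,R,K})$. That lemma requires two hypotheses: that $\cD_2$ is an admissible pair, and that $I$ admits a bounded approximate unit. The first is given by assumption. The second is exactly the content of Lemma \ref{bounded approximate unit of I}, which produces a bounded approximate unit of $I_{X,R,K}$ contained in the closed unit disc (built from continuous $[0,1]$-valued, respectively $\{0,1\}$-valued, cut-off functions vanishing on $K$).

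Therefore I would simply write: by Lemma \ref{bounded approximate unit of I}, the closed ideal $I_{X,R,K} \subset \rC(X,R)$ admits a bounded approximate unit, and by Lemma \ref{flatness of ideal admitting an approximate unit}~(ii) applied to $A = \rC(X,R)$ and $I = I_{X,R,K}$, both $I_{X,R,K}$ and the quotient $\rC(X,R)/I_{X,R,K}$ are strongly flat objects of $\Mod_{\cC}(\rC(X,R))$.

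There is no real obstacle here; the whole content of the statement is already packaged into the previous two lemmas. The only thing worth double-checking is that the admissibility hypothesis in the statement matches the hypothesis of Lemma \ref{flatness of ideal admitting an approximate unit}~(ii), and that the ambient Banach $R$-algebra $\rC(X,R)$ is treated as a monoid object of $\Mod_{\cC}(R)$ so that the general flatness lemma applies verbatim; both are immediate from the definitions fixed at the start of this subsection.
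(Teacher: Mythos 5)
Your proposal is correct and is essentially the paper's own proof: the paper likewise deduces the statement by combining Lemma \ref{bounded approximate unit of I} with Lemma \ref{flatness of ideal admitting an approximate unit}~(ii) (it additionally re-cites Proposition \ref{dual of flatness} and Lemma \ref{faithful injectivity of R}, but those are already absorbed into the proof of the latter lemma, so your shorter citation list loses nothing).
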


\begin{proof}
The assertion immediately follows from Proposition \ref{dual of flatness}, Lemma \ref{faithful injectivity of R}, Lemma \ref{flatness of ideal admitting an approximate unit} (ii), and Lemma \ref{bounded approximate unit of I}.
\end{proof}

We give an analogue of Lemma \ref{idempotence of I for projective setting}.

\begin{lmm}
\label{idempotence of I for flat setting}
If $\cD_2$ is an admissible pair, then $I_{X,R,K} = I_{X,R,K}^2$ holds.
\end{lmm}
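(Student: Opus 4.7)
My plan is to exhibit, for each $f \in I_{X,R,K}$, a single factorisation $f = f_0 f_1$ with $(f_0, f_1) \in I_{X,R,K} \times I_{X,R,K}$, which immediately yields the claimed equality $I_{X,R,K} = I_{X,R,K}^2$. The admissible pair hypothesis breaks naturally into two subcases, which I treat separately.

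First, I would handle the case when $R$ is $\R$, $\C$, or a local field. In these situations $R$ carries a non-trivial valuation and admits by Example \ref{bounded factrisation system} (ii) (resp.\ (iii)) a bounded factorisation system $(V, C, D_0, D_1)$ with $V = R$. Applying Example \ref{bounded factrisation system} (iv), this lifts to a bounded factorisation system $(V_X, C_X, D_{X,0}, D_{X,1})$ on $\rC(X, R)$ with $V_X = \rC(X, R)$, under which $I_{X,R,K}$ is stable. Since $D_i(0) = 0$ by the defining estimate, $D_{X,i}(f)$ vanishes on $K$ whenever $f$ does, so $f = D_{X,0}(f) \cdot D_{X,1}(f)$ produces the desired factorisation.

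Next, I would treat the case when $R$ is a finite field with the trivial valuation. The only bounded factorisation system on such an $R$ has $V = \ens{0}$ by Example \ref{bounded factrisation system} (i), so the above strategy only factors $f = 0$. Instead, I would exploit that the trivial valuation makes $R$ a discrete topological ring, so every continuous map $X \to R$ is locally constant; in particular $U_f \coloneqq f^{-1}(\ens{0})$ is a clopen subset of $X$ containing $K$. The characteristic function $\chi$ of $X \setminus U_f$ is then continuous, lies in $I_{X,R,K}$, and $f \chi = f$ because $f$ vanishes on $U_f$. Thus $f = \chi \cdot f \in I_{X,R,K}^2$.

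The only genuine obstacle is recognising that the factorisation-system approach which parallels Lemma \ref{idempotence of I for projective setting} fails for $R$ a finite field with trivial valuation, forcing a switch to the clopen-support trick. Once the case split is made, each subcase is a one-line construction.
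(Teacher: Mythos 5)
Your proof is correct and takes essentially the paper's approach: the paper's own proof is a one-line citation of Example \ref{bounded factrisation system} (ii) and (iv), which is exactly your first case for $R = \R$ or $\C$. You are in fact more careful than the paper, whose citation omits (iii) for local fields and does not address the finite field with trivial valuation, where (i) only provides $V = \ens{0}$ and the factorisation system alone factors nothing but $0$. Your clopen-support argument for that case is sound --- it is precisely the $\chi$-trick from the proof of Lemma \ref{idempotence of I for projective setting}, with the clopen set $U = f^{-1}(\ens{0})$ supplied for free by the discreteness of $R$ rather than by a $G_{\delta}$ hypothesis on $K$ --- and it correctly completes the case analysis that the admissible-pair hypothesis requires.
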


\begin{proof}
The assertion immediately follows from Example \ref{bounded factrisation system} (ii) and (iv).
\end{proof}

We give an analogue of Lemma \ref{Tietze for projective setting}, which is an extension of Tietze extension theorem for compact Hausdorff topological spaces.

\begin{lmm}
\label{Tietze for flat setting}
If $\cD_3$ is an admissible triple, then $\pi_{X,R,K}$ is a strict epimorphism in $\Mod_{\cC}(R)$ and $\Mod_{\cC_{\leq 1}}(R)$.
\end{lmm}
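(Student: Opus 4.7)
The plan is to split into the Archimedean and non-Archimedean cases of an admissible triple, and handle the non-Archimedean one by invoking the already-established projective version.

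In the non-Archimedean case, $\cC = \NBAb$, the space $X$ is totally disconnected, and $R$ is either a finite field with the trivial valuation or a local field. In either situation the norm on $R$ is submultiplicative with $\n{1}_R = 1$, so $R$ is a monoid object of $\cC_{\leq 1} = \NBAbleq$. Both hypotheses of Lemma~\ref{Tietze for projective setting} are therefore met, and that lemma immediately gives the strict epimorphism property simultaneously in $\Mod_{\cC}(R)$ and $\Mod_{\cC_{\leq 1}}(R)$.

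In the Archimedean case, $\cC = \BAb$, $X$ is an arbitrary compact Hausdorff space, and $R$ is $\R$ or $\C$ with $\v{\cdot}_{\infty}$. Here the strategy is to produce, for every $f \in \rC(K,R)$, an extension $\tl{f} \in \rC(X,R)$ with $\pi_{X,R,K}(\tl{f}) = f$ and $\n{\tl{f}}_{\rC(X,R)} = \n{f}_{\rC(K,R)}$. When $R = \R$ this is precisely the content of the classical Tietze extension theorem for normal topological spaces, applied to the continuous bounded function $f$. When $R = \C$, apply Tietze separately to the real and imaginary parts of $f$ to get an initial extension $\tl{g} \in \rC(X,\C)$, then post-compose with the continuous radial retraction $\rho \colon \C \to \ol{B(0,\n{f}_{\rC(K,R)})}$ defined by $\rho(z) = z$ when $\v{z}_{\infty} \leq \n{f}_{\rC(K,R)}$ and $\rho(z) = \n{f}_{\rC(K,R)} z / \v{z}_{\infty}$ otherwise. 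The resulting $\tl{f} = \rho \circ \tl{g}$ still extends $f$ (since $\v{f(x)}_{\infty} \leq \n{f}_{\rC(K,R)}$ for $x \in K$) and has sup-norm equal to $\n{f}_{\rC(K,R)}$.

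This norm-preserving lifting shows that the quotient norm on $\rC(X,R) / I_{X,R,K}$ coincides with the norm on $\rC(K,R)$, hence $\pi_{X,R,K}$ is strict in $\Mod_{\cC_{\leq 1}}(R)$; strictness in $\Mod_{\cC}(R)$ then follows from the open mapping theorem (or directly from the same construction). The only slightly delicate point is the $\C$-valued case: a naive componentwise Tietze gives an extension with a potentially larger sup-norm (up to a factor of $\sqrt{2}$), and it is the explicit retraction $\rho$ that restores the exact norm bound. This is really the only step where the Archimedean setting requires care beyond a direct quotation of classical results.
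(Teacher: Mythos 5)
Your proposal is correct and follows essentially the same route as the paper, which disposes of the non-Archimedean case by reduction to Lemma \ref{Tietze for projective setting} (valid since an admissible triple forces $X$ totally disconnected and the norm on $R$ submultiplicative with $\n{1}_R = 1$) and the Archimedean case by the Tietze extension theorem. The paper's proof is a two-line citation, so your added detail on the complex-valued case --- componentwise Tietze followed by the radial retraction to recover the exact sup-norm bound, which is what makes the map strict in $\Mod_{\cC_{\leq 1}}(R)$ and not merely surjective --- is a welcome elaboration rather than a different argument.
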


\begin{proof}
When $\cC = \BAb$, then the assertion follows from Tietze extension theorem. When $\cC = \NBAb$, then $X$ is totally disconnected, and hence the assertion follows from Lemma \ref{Tietze for projective setting}.
\end{proof}

We note that the proof of Lemma \ref{Tietze for flat setting} explains the reason why Theorem \ref{closed immersion with flat kernel} requires the condition that $\cD_3$ is an admissible triple, which is strictly stronger than the condition that $\cD_2$ is an admissible pair in the non-Archimedean setting. Indeed, the following counterexample shows that the condition is actually necessary:

\begin{rmk}
\label{closed immersion is not necessarily homotopy epimorphism}
Consider the case $(\cC,X,R,K) = (\NBAb,[0,1],\Qp,\ens{0,1})$, where $\cD_2$ is an admissible pair but $\cD_3$ is not an admissible triple. Then $\pi_{X,R,K}$ is not surjective because $\rC(X,R) \cong \Qp$ and $\rC(K,R) \cong \Qp^2$, and hence is not a strict epimorphism in $\Mod_{\cC}(R)$. Moreover, $\pi_{X,R,K}$ is not an epimorphism in $\Alg_{\cC}(R)$ by $\dim_{\Qp}(\Qp^2 \wh{\otimes}_{\Qp} \Qp^2) = \dim_{\Qp}(\Qp^4) = 4 \neq 2 = \dim_{\Qp}(\Qp^2)$, and hence is not a homotopy epimorphism in $\Alg_{\cC}(R)$ by Proposition \ref{homotopy epimorphism is epimorphism}. In particular, Theorem \ref{closed immersion with flat kernel} and Lemma \ref{Tietze for flat setting} would not hold if we dropped the condition that $\cD_3$ is an admissible triple.
\end{rmk}

\begin{lmm}
\label{flatness of C(K,R)}
If $\cD_3$ is an admissible triple, then $\rC(K,R)$ is a strongly flat object of $\Mod_{\cC}(\rC(X,R))$, and the morphism $\rC(X,R)/I_{X,R,K} \to \rC(K,R)$ in $\Alg_{\cC_{\leq 1}}(\rC(X,R))$ induced by $\pi_{X,R,K}$ is an isomorphism.
\end{lmm}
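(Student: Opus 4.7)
The plan is to reduce everything to Lemma \ref{flatness of I} and Lemma \ref{Tietze for flat setting}. First I would handle the second assertion: by Lemma \ref{Tietze for flat setting}, $\pi_{X,R,K}$ is a strict epimorphism in $\Mod_{\cC_{\leq 1}}(R)$. By definition $I_{X,R,K} = \ker(\pi_{X,R,K})$, and since $\pi_{X,R,K}$ is strict, the universality of the coimage yields that the canonical morphism $\ol{\pi}_{X,R,K} \colon \rC(X,R)/I_{X,R,K} \to \rC(K,R)$ is an isomorphism in $\Mod_{\cC_{\leq 1}}(R)$. Because the quotient $\rC(X,R)/I_{X,R,K}$ inherits a natural $\rC(X,R)$-algebra structure, and $\ol{\pi}_{X,R,K}$ is $\rC(X,R)$-linear and multiplicative (being induced by the ring homomorphism $\pi_{X,R,K}$), the same bounded inverse is a morphism in $\Alg_{\cC_{\leq 1}}(\rC(X,R))$, so $\ol{\pi}_{X,R,K}$ is an isomorphism in $\Alg_{\cC_{\leq 1}}(\rC(X,R))$.

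For the first assertion, I would combine the isomorphism just obtained with Lemma \ref{flatness of I}. Since $\cD_3$ is an admissible triple, $\cD_2 = (\cC,R)$ is an admissible pair, so Lemma \ref{flatness of I} applies and $\rC(X,R)/I_{X,R,K}$ is a strongly flat object of $\Mod_{\cC}(\rC(X,R))$. Transporting along the isomorphism $\ol{\pi}_{X,R,K}$ in $\Mod_{\cC}(\rC(X,R))$ (obtained by forgetting the unit-ball constraint) gives that $\rC(K,R)$ is strongly flat.

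The argument is essentially immediate from earlier results, so there is no real obstacle beyond bookkeeping. The only point that requires a moment of care is that Lemma \ref{Tietze for flat setting} provides strictness at the level of $\Mod_{\cC_{\leq 1}}(R)$, which is what is needed to identify the coimage $\rC(X,R)/I_{X,R,K}$ with $\rC(K,R)$ as a genuine quotient (with the quotient norm equal to the supremum norm on $K$), rather than merely as an isomorphism of underlying $R$-modules. Once this identification is in place in the submetric category $\cC_{\leq 1}$, passing to $\cC$ and to the algebra category is formal.
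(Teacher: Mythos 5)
Your proposal is correct and follows essentially the same route as the paper, whose proof simply cites Lemma \ref{flatness of I} and Lemma \ref{Tietze for flat setting} (together with Lemma \ref{flatness of ideal admitting an approximate unit}); you have merely spelled out the bookkeeping, including the relevant point that the strictness of $\pi_{X,R,K}$ in $\Mod_{\cC_{\leq 1}}(R)$ identifies the quotient norm with the supremum norm so that the induced map is an isomorphism in $\Alg_{\cC_{\leq 1}}(\rC(X,R))$, after which strong flatness transports from $\rC(X,R)/I_{X,R,K}$ to $\rC(K,R)$.
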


\begin{proof}
The assertion immediately follows from Lemma \ref{flatness of ideal admitting an approximate unit}, Lemma \ref{flatness of I}, and Lemma \ref{Tietze for flat setting}.
\end{proof}

\begin{proof}[Proof of Theorem \ref{closed immersion with flat kernel}]
By Lemma \ref{flatness of C(K,R)}, $C(K,R)$ is a strongly flat object of $\rC(X,R)$ and $\pi_{X,R,K}$ is an epimorphism in $\Alg_{\cC}(\rC(X,R))$. The natural morphism
\begin{eqnarray*}
\rC(K,R) \wh{\otimes}_{\rC(X,R)}^{\bL} \rC(K,R) \to \rC(K,R) \wh{\otimes}_{\rC(X,R)} \rC(K,R)
\end{eqnarray*}
in $\Derba_{\cC}(\rC(X,R))$ is an isomorphism because it is represented by the morphism
\begin{eqnarray*}
\Tot(P_{\cC}^{\rC(X,R)}(\rC(K,R)) \wh{\otimes}_{\rC(X,R)}^{\bL} P_{\cC}^{\rC(X,R)}(\rC(K,R))) \to \rC(K,R) \wh{\otimes}_{\rC(X,R)} \rC(K,R)
\end{eqnarray*}
in $\Chba_{\cC}(\rC(X,R))$, which is a quasi-isomorphism by the strong flatness of $C(K,R)$ and Proposition \ref{derived symmetric monoidal structure} (iii). Since $\pi_{X,R,K}$ is an epimorphism in $\Alg_{\cC}(\rC(X,R))$, the assertion follows from Proposition \ref{flat epimorphism is homotopy epimorphism}.
\end{proof}

\section{Derived covers versus topological covers}
\label{Derived covers versus topological covers}

Let $\cC$ be either $\BAb$ or $\NBAb$, $R$ a monoid object of $\cC$, and $X$ a compact Hausdorff topological space. We continue to use the abbreviations $\cD_2$ and $\cD_3$ as in \S \ref{Flatness criterion}. We have shown that a closed immersion of topological spaces corresponds to a homotopy Zariski localisations of the associated algebras of continuous functions, under mild assumptions on $\cD_3$. Then, natural questions arise: Does a homotopy Zariski open immersion always correspond to a closed immersion? Does a topological cover by closed immersions correspond to a cover in the sense of derived analytic geometry and vice versa? We will answer these two questions in this section in Theorem \ref{relation of cover by subsets}, Theorem \ref{relation of cover}, and Lemma \ref{equivalemce of homotopy epimorphism and strict epimorphism}.

\subsection{Topological covers}
\label{Topological covers}

We introduce the notion of a topological cover. We denote by $\TDCH \subset \CH$ (cf.\ the proof of Proposition \ref{epimorphism of C*-algebra}) the full subcategory of totally disconnected compact Hausdorff spaces. When $\cC = \BAb$, then we put $\CH_{\cC} \coloneqq \CH$. When $\cC = \NBAb$, then we put $\CH_{\cC} \coloneqq \TDCH$. We denote by $\CH/X$ the slice category of $\CH$ over $X$, and by $\TDCH/X$ the full subcategory of $\CH/X$ consisting of objects whose underlying topological spaces are totally disconnected. We note that $\TDCH/X$ makes sense even when $X$ is not totally disconnected.

\vspace{0.1in}
We denote by $C^*_{\cD_3}$ the full subcategory of $\Alg_{\cC}(\rC(X,R))$ consisting of objects the form $\rC(Y,R)$ for some $Y \in \CH_{\cC}$, and by $\Gamma^{\cD_3}$ the functor $(\CH_{\cC}/X)^{\op} \to C^*_{\cD_3}$ assigning $\rC(Y,R)$ equipped with the pre-composition $\rC(X,R) \to \rC(Y,R)$ with $\Phi$ to each $(Y,\Phi) \in \CH_{\cC}/X$. For a subset $S \subset \CH_{\cC}/X$, we put $\Gamma^{\cD_3}_*(S) \coloneqq \set{\Gamma^{\cD_3}(Y,\Phi)}{(Y,\Phi) \in S}$.

\vspace{0.1in}
When $\cC = \BAb$, we denote by $\beta_{\cC}$ the identity functor $\CH \to \CH$, and by $\iota_{\cC}$ the identity natural transformation $\id_{\CH} \Rightarrow \beta_{\cC}$. When $\cC = \NBAb$, we denote by $\beta_{\cC}$ the Banaschewski compactification functor $\CH \to \TDCH$, and by $\iota_{\cC}$ the natural transformation $\id_{\CH} \Rightarrow \beta_{\cC}$ assigning the canonical map $Y \to \beta_{\cC}(Y)$ to each compact Hausdorff topological space $Y$. We recall that the Banaschewski compactification functor is a left adjoint functor to the inclusion $\TDCH \hookrightarrow \CH$ with unit $\iota_{\cC}$, and is explicitly described in \cite{Mih14} Theorem 1.3 using the Berkovich spectrum functor $\Alg_{\cC}(R) \to \CH$.

\vspace{0.1in}
A {\it $\cC$-embedding} is a morphism $\phi \colon X_0 \to X_1$ in $\CH$ such that $\beta_{\cC}(\phi)$ is a homeomorphism onto the image. In particular, the notion of an $\BAb$-embedding is equivalent to that of a homeomorphism onto image between compact Hausdorff topological spaces, and every homeomorphism onto the image between totally disconnected compact Hausdorff topological spaces is an $\NBAb$-embedding. On the other hand, an $\NBAb$-embedding is not necessarily a homeomorphism onto the image, and a homeomorphism onto the image between (not necessarily totally disconnected) compact Hausdorff topological spaces is not necessarily an $\NBAb$-embedding. Indeed, the projection $[0,1] \to \ens{0}$ is an $\NBAb$-embedding which is not a homeomorphism onto the image, and the inclusion $\ens{0,1} \hookrightarrow [0,1]$ is a homeomorphism onto the image which is not an $\NBAb$-embedding.

\vspace{0.1in}
A {\it $\cC$-cover of $X$} is a subset $S \subset \CH/X$ satisfying the following:
\begin{itemize}
\item[(i)] For any $Y \in S$, the structure map $Y \to X$ is a $\cC$-embedding.
\item[(ii)] There exists a finite subset $S_0 \subset S$ such that the composite of $\iota_{\cC}(X) \colon X \to \beta_{\cC}(X)$ and the coproduct $\bigsqcup_{Y \in S_0} Y \to X$ of the structure maps is surjective.
\end{itemize}
We now describe the relations between $\cC$-covers of $X$, derived covers of $\rC(X,R)$ in $\cC$, and non-derived covers of $\rC(X,R)$ in $\cC$.

\subsection{Acyclicity for a topological cover by closed immersions}
\label{Acyclicity for a topological cover by closed immersions}

We first compare the notion of a $\cC$-cover by closed subsets and the notion of a derived cover.

\begin{thm}
\label{relation of cover by subsets}
Let $S$ be a set of closed immersions into $X$. If $\cD_3$ is an admissible triple, then the following are equivalent:
\begin{itemize}
\item[(i)] The set $S$ is a $\cC$-cover of $X$.
\item[(ii)] The set $S$ admits a finite subset $S_0 \subset S$ with $\bigcup_{(K,j) \in S_0} j(K) = X$.
\item[(iii)] The set $\Gamma^{\cD_3}_*(S)$ is a non-derived cover of $\rC(X,R)$ in $\cC$.
\item[(iv)] The set $\Gamma^{\cD_3}_*(S)$ is a derived cover of $\rC(X,R)$ in $\cC$.
\end{itemize}
\end{thm}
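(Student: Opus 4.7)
The strategy is to establish the cycle (i) $\Leftrightarrow$ (ii) $\Rightarrow$ (iv) $\Leftrightarrow$ (iii) $\Rightarrow$ (ii). The equivalence (i) $\Leftrightarrow$ (ii) is essentially an unpacking of definitions: in the Archimedean case, $\beta_{\cC}$ is the identity functor, so every closed immersion is already a $\cC$-embedding and the finite jointly surjective subfamily condition in the definition of a $\cC$-cover is literally (ii); in the non-Archimedean case, the admissibility of $\cD_3$ forces $X$, and hence every closed subspace $K$, to belong to $\TDCH$, so $\iota_{\cC}(X)$ and $\beta_{\cC}(j)$ are homeomorphisms and the same reduction applies.

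The equivalence (iii) $\Leftrightarrow$ (iv) follows from Proposition \ref{relation between derived covers and non-derived covers}: by Lemma \ref{flatness of C(K,R)} each $\rC(K,R)$ is strongly flat over $\rC(X,R)$, so Proposition \ref{derived symmetric monoidal structure} (iii) identifies $B_0 \wh{\otimes}_{\rC(X,R)}^{\bL} B_1$ with $B_0 \wh{\otimes}_{\rC(X,R)} B_1$ for any pair in $\Gamma^{\cD_3}_*(S)$, verifying the hypothesis.

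The main content is (ii) $\Rightarrow$ (iv). Fix a jointly surjective finite family $(K_1, j_1), \ldots, (K_n, j_n)$; each $\pi_{X,R,K_i}$ is a homotopy epimorphism by Theorem \ref{closed immersion with flat kernel}, verifying the first condition of being a derived cover. For derived Tate's acyclicity, I would first identify iterated completed tensor products as algebras of continuous functions on intersections: using Lemma \ref{flatness of C(K,R)} together with universality of cokernels in $\Mod_{\cC}(\rC(X,R))$, one has $\rC(K_{i_1},R) \wh{\otimes}_{\rC(X,R)} \cdots \wh{\otimes}_{\rC(X,R)} \rC(K_{i_m},R) \cong \rC(X,R)/\ol{I_{X,R,K_{i_1}} + \cdots + I_{X,R,K_{i_m}}}$, and then verify that this closed sum coincides with $I_{X,R,K_{i_1} \cap \cdots \cap K_{i_m}}$. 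Density of the algebraic sum is obtained by $\varepsilon$-approximation: given $f$ vanishing on the intersection, the sets $K_{i_k} \cap \set{x \in X}{\n{f(x)}_R \geq \varepsilon}$ are pairwise disjoint, so a Urysohn-type partition of unity (Archimedean case) or a disjoint clopen refinement (non-Archimedean totally disconnected case) yields a decomposition $f = f_1 + \cdots + f_m$ solving the problem up to error $\varepsilon$, and an iterative construction analogous to Lemma \ref{Tietze for projective setting} converges. Having identified the tensor products, the derived Tate--\Cech complex is quasi-isomorphic (by Proposition \ref{derived symmetric monoidal structure} (iii) and (iv)) to the ordinary \Cech complex
\begin{eqnarray*}
0 \to \rC(X,R) \to \prod_i \rC(K_i,R) \to \prod_{i<j} \rC(K_i \cap K_j, R) \to \cdots
\end{eqnarray*}
of the closed cover, and its strict exactness follows from a bounded contracting homotopy built from a partition of unity subordinate to open thickenings of the $K_i$ (Archimedean) or from a disjoint clopen refinement (non-Archimedean totally disconnected), together with bounded extensions furnished by Lemma \ref{Tietze for flat setting}.

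Finally, (iv) $\Rightarrow$ (ii) is an easy separation argument: strict exactness at degree zero gives injectivity of $\rC(X,R) \to \prod_i \rC(K_i,R)$, so if $\bigcup_i j_i(K_i) \neq X$, the closed set $\bigcup_i j_i(K_i)$ and any $x_0 \in X \setminus \bigcup_i j_i(K_i)$ can be separated by a nonzero continuous function via Urysohn (Archimedean) or a clopen (non-Archimedean totally disconnected), producing a nonzero element of the kernel and a contradiction. The principal technical obstacle is the construction of the \Cech contracting homotopy with uniformly bounded operator norm: the pointwise formula coming from a partition of unity yields only a set-theoretic splitting, and the extension constants introduced via Lemma \ref{Tietze for flat setting} have to be absorbed by an iterative refinement in the spirit of the proof of Lemma \ref{Tietze for projective setting}, which is why the flatness and Tietze-type results developed in \S \ref{Flatness criterion} are precisely the inputs needed here.
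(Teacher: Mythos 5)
Your overall architecture agrees with the paper's: (i) $\Leftrightarrow$ (ii) by unwinding the definition of a $\cC$-cover, (iii) $\Leftrightarrow$ (iv) via Proposition \ref{relation between derived covers and non-derived covers} and the strong flatness of the $\rC(K,R)$, the identification of the completed tensor products with the function algebras of the intersections via density of sums of the ideals $I_{X,R,K}$ (the paper's Lemma \ref{subset operation and aritmetic operation} and Lemma \ref{intersection}), and the separation argument showing that a non-surjective family cannot be acyclic. The one place where you depart from the paper is the only genuinely hard step, and there your plan fails.

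You propose to prove strict exactness of the \Cech complex of a finite surjective closed cover by constructing a contracting homotopy of uniformly bounded operator norm, with the Tietze extension constants ``absorbed by an iterative refinement''. No such homotopy exists in general, so this is not a technicality that iteration can repair: a bounded contracting homotopy would make the complex split exact, and splitness already fails for two-element closed covers. Concretely, let $X$ be two copies of $\beta\N$ glued along $\beta\N \setminus \N$, with $K_0$ and $K_1$ the two copies. Then $\rC(K_i,\R) \cong \ell^{\infty}$ and $\rC(K_0 \cap K_1,\R) \cong \ell^{\infty}/c_0$, both restrictions being the quotient map, so a bounded right inverse of the last differential $(f_0,f_1) \mapsto f_0|_{K_0 \cap K_1} - f_1|_{K_0 \cap K_1}$ would yield a bounded linear section of $\ell^{\infty} \to \ell^{\infty}/c_0$, i.e.\ a complement of $c_0$ in $\ell^{\infty}$, contradicting Phillips' theorem. (The non-Archimedean variant has the analogous defect; moreover a finite closed cover of a totally disconnected compact space need not admit a subordinate disjoint clopen refinement, as the cover of $\ens{0} \cup \set{1/n}{n \geq 1}$ by the even-indexed and odd-indexed points together with $0$ shows.) The paper circumvents this entirely: it first proves exactness of the complex as a complex of abstract modules, using the exactness of the \Cech complex of arbitrary set-theoretic functions together with the gluing lemma for finite closed covers, and then upgrades exactness to strictness by the open mapping theorem, since a bounded surjection onto the Banach space $\ker(d_{n+1})$ is automatically open and hence strict; the trivially valued case is handled directly because all norms there take values in $\ens{0,1}$. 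You should replace your homotopy argument by this exactness-plus-open-mapping argument.
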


We note that the assumption that $\cD_3$ is an admissible triple implies that all closed subsets of $X$ are objects of $\CH_{\cC}$. Therefore, we can ignore $\beta_{\cC}$ and $\iota_{\cC}$ in the definitions of a $\cC$-embedding and a $\cC$-cover (these maps will be used in the next subsection). By an argument similar to that of Remark \ref{rational subsets}, a $\cC$-cover does not necessarily contain a finite subcover by finite unions of rational subsets. Therefore, Theorem \ref{relation of cover by subsets} is strictly stronger than Tate's acyclicity for a rational cover. In order to show Theorem \ref{relation of cover by subsets}, we prove some lemmata.

\begin{lmm}
\label{subset operation and aritmetic operation}
Let $K_0$ and $K_1$ be closed subsets of $X$. If $\cD_3$ is an admissible triple, then the closure of $I_{X,R,K_0} I_{X,R,K_1}$ coincides with $I_{X,R,K_0 \cup K_1}$ and $I_{X,R,K_0} \cap I_{X,R,K_1}$, and the closure of $I_{X,R,K_0} + I_{X,R,K_1}$ coincides with $I_{X,R,K_0 \cap K_1}$.
\end{lmm}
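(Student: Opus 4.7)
The plan is to split the statement into two parts: the two equalities involving the product ideal, which follow essentially for free from the idempotence lemma already at hand, and the equality involving the sum ideal, which requires a separation-plus-extension argument.

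For the product part, first note that $I_{X,R,K_0} I_{X,R,K_1} \subseteq I_{X,R,K_0} \cap I_{X,R,K_1}$ is immediate from the ideal property, and $I_{X,R,K_0} \cap I_{X,R,K_1} = I_{X,R,K_0 \cup K_1}$ is just the fact that a continuous function vanishes on $K_0 \cup K_1$ exactly when it vanishes on each $K_i$. Conversely, Lemma~\ref{idempotence of I for flat setting} gives $I_{X,R,K_0 \cup K_1} = I_{X,R,K_0 \cup K_1}^{2}$, and since $I_{X,R,K_0 \cup K_1} \subseteq I_{X,R,K_i}$ for $i = 0, 1$, this forces $I_{X,R,K_0 \cup K_1} \subseteq I_{X,R,K_0} \cdot I_{X,R,K_1}$. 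Putting these together, all three sets agree already before passing to the closure, and in particular the closure identity holds.

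For the sum part, the direction $\overline{I_{X,R,K_0} + I_{X,R,K_1}} \subseteq I_{X,R,K_0 \cap K_1}$ is automatic: the sum already lies in the closed ideal $I_{X,R,K_0 \cap K_1}$ because any $f_0 + f_1$ with $f_i \in I_{X,R,K_i}$ vanishes on $K_0 \cap K_1$. For the nontrivial direction, given $f \in I_{X,R,K_0 \cap K_1}$ and $\epsilon > 0$, I would introduce the open neighbourhood $U \coloneqq \set{x \in X}{\n{f(x)}_R < \epsilon}$ of $K_0 \cap K_1$ and observe that $K_0 \setminus U$ and $K_1 \setminus U$ are then disjoint closed subsets of $X$. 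I would separate them by a function $u \in \rC(X,R)$ with $u|_{K_0 \setminus U} = 0$ and $u|_{K_1 \setminus U} = 1$: Urysohn's lemma supplies a $[0,1]$-valued $u$ when $\cC = \BAb$, while in the case $\cC = \NBAb$ the total disconnectedness of $X$ (which is part of the definition of an admissible triple) lets me invoke Lemma~\ref{ultranormality} to get a clopen separation, with $u$ the associated characteristic function. Decomposing $f = u f + (1 - u) f$, each summand has sup-norm at most $\epsilon$ on the respective $K_i$, so Lemma~\ref{Tietze for flat setting} (in its strict-epimorphism form, giving a bounded section of the restriction map) furnishes extensions $\tl g_0, \tl g_1 \in \rC(X, R)$ of norm $O(\epsilon)$ of these restrictions. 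Setting $f_0 \coloneqq u f - \tl g_0 \in I_{X,R,K_0}$ and $f_1 \coloneqq (1-u) f - \tl g_1 \in I_{X,R,K_1}$ gives $\n{f - (f_0 + f_1)}_{\rC(X,R)} = O(\epsilon)$, proving density.

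The only step requiring real care is the separation in the non-Archimedean case, where a scalar-valued $u$ with values in $[0,1]$ is unavailable; the hypothesis that $X$ is totally disconnected in the definition of an admissible triple is exactly what one needs to replace the Urysohn partition by a clopen one via Lemma~\ref{ultranormality}. The rest is essentially bookkeeping of norm bounds in the Tietze extension step.
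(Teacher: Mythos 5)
Your proof is correct. For the product part you take a genuinely different (and slightly stronger) route: the paper approximates $f \in I_{X,R,K_0 \cup K_1}$ by $uf$ with $u$ running through the bounded approximate unit of $I_{X,R,K_0}$ from Lemma \ref{bounded approximate unit of I}, which only yields density, whereas your appeal to Lemma \ref{idempotence of I for flat setting} together with $I_{X,R,K_0 \cup K_1} \subseteq I_{X,R,K_i}$ gives the on-the-nose equality $I_{X,R,K_0} I_{X,R,K_1} = I_{X,R,K_0} \cap I_{X,R,K_1} = I_{X,R,K_0 \cup K_1}$ before taking closures; both arguments are legitimate since both auxiliary lemmas precede this one. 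For the sum part your argument is the same separation-plus-extension strategy as the paper's but organized differently: the paper extends the characteristic functions of the closed sets $K_2 = \set{x \in K_1}{\n{f(x)}_R \geq 3^{-2}\epsilon}$ and then $K_3$ via Lemma \ref{Tietze for flat setting} and multiplies $f$ (resp.\ $f - f_0$) by these extensions, while you first split $f = uf + (1-u)f$ using a Urysohn function (Archimedean case) or a clopen separation from Lemma \ref{ultranormality} (non-Archimedean case, where total disconnectedness of $X$ is available from the admissible-triple hypothesis) subordinate to the neighbourhood $\set{x \in X}{\n{f(x)}_R < \epsilon}$ of $K_0 \cap K_1$, and then correct each piece by a small Tietze extension of its restriction to $K_0$ resp.\ $K_1$. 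The norm bookkeeping in your version checks out ($\n{f - (f_0 + f_1)}_{\rC(X,R)} = \n{\tl{g}_0 + \tl{g}_1}_{\rC(X,R)} \leq 4\epsilon$), so either version could stand as the proof; yours trades the paper's two-stage truncation for a one-shot partition, at the cost of invoking Urysohn's lemma explicitly in the Archimedean case.
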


\begin{proof}
We have  $I_{X,R,K_0 \cup K_1} = I_{X,R,K_0} \cap I_{X,R,K_1}$, because an $f \in \rC(X,R)$ is identically zero on $K_0 \cup K_1$ if and only if $f$ is identically zero on $K_0$ and $K_1$. Since $I_{X,R,K_0 \cup K_1}$ and $I_{X,R,K_0 \cap K_1}$ are closed ideals containing $I_{X,R,K_0} I_{X,R,K_1}$ and $I_{X,R,K_0} + I_{X,R,K_1}$ respectively, it suffices to show that $I_{X,R,K_0} I_{X,R,K_1}$ and $I_{X,R,K_0} + I_{X,R,K_1}$ are dense in $I_{X,R,K_0 \cup K_1}$ and $I_{X,R,K_0 \cap K_1}$ respectively.

\vspace{0.1in}
We show that for any $(f,\epsilon) \in I_{X,R,K_0 \cup K_1} \times (0,\infty)$, there exists an $(f_0,f_1) \in I_{X,R,K_0} \times I_{X,R,K_1}$ such that $\n{f - f_0 f_1}_{\rC(X,R)} \leq \epsilon$. By Lemma \ref{bounded approximate unit of I}, $I_{X,R,K_0}$ admits a bounded approximate unit $(U,\leq_U)$ contained in the closed unit disc. By $f \in I_{X,R,K_0 \cup K_1} \subset I_{X,R,K_0}$, there exists a $u \in U$ such that $\n{f - u f}_{\rC(X,R)} < \epsilon$. Put $f_0 \coloneqq u$ and $f_1 \coloneqq f$. Then $(f_0,f_1)$ satisfies the desired property. Therefore $I_{X,R,K_0} I_{X,R,K_1}$ is dense in $I_{X,R,K_0 \cup K_1}$.

\vspace{0.1in}
We show that for any $(f,\epsilon) \in I_{X,R,K_0 \cap K_1} \times (0,\infty)$, there exists an $(f_0,f_1) \in I_{X,R,K_0} \times I_{X,R,K_1}$ such that $\n{f - (f_0 + f_1)}_{\rC(X,R)} \leq \epsilon$. Put $K_2 \coloneqq \set{x \in K_1}{\n{f(x)}_R \geq 3^{-2} \epsilon}$. By $K_0 \cap K_1 \subset f^{-1}(\ens{0})$, we have $K_0 \cap K_2 = \emptyset$. By the normality of $X$, the characteristic function $\chi_0 \colon K_0 \cup K_2 \to R$ of $K_2$ is continuous. By Lemma \ref{Tietze for flat setting} applied to $K = K_0 \cup K_2$, there exists a $\tl{\chi}_0 \in \rC(X,R)$ such that $\pi_{X,R,K_0 \cup K_2}(\tl{\chi}_0) = \chi_0$ and $\n{\tl{\chi}_0}_{\rC(X,R)} < 2$. By $\pi_{X,R,K_0}(\tl{\chi}_0) = \pi_{K_0 \cup K_2,R,K_0}(\chi_0) = 0$, we have $\tl{\chi}_0 \in I_{X,R,K_0}$. Put $f_0 \coloneqq f \tl{\chi}_0 \in I_{X,R,K_0}$ and $K_3 \coloneqq \set{x \in X}{\n{(f - f_0)(x)}_R \geq 3^{-1} \epsilon}$. For any $x \in K_2$, we have 
\begin{eqnarray*}
(f - f_0)(x) = f(x) - f(x) \tl{\chi}_0(x) = f(x) - f(x) \chi_0(x) = 0.
\end{eqnarray*}
For any $x \in K_1 \setminus K_2$, we have
\begin{eqnarray*}
& & \n{(f - f_0)(x)}_R = \n{f(x) - f(x) \tl{\chi}_0(x)}_R = \n{f(x)}_R \n{1 - \tl{\chi}_0(x)}_R \\
& \leq & \n{f(x)}_R (\n{1}_{\rC(X,R)} + \n{\tl{\chi}_0}_{\rC(X,R)}) < 3^{-2} \epsilon \cdot (1 + 2) = 3^{-1} \epsilon.
\end{eqnarray*}
This implies that $K_1 \cap K_3 = \emptyset$. By the normality of $X$, the characteristic function $\chi_1 \colon K_1 \cup K_3 \to R$ of $K_3$ is continuous. By Lemma \ref{Tietze for flat setting} applied to $K = K_1 \cup K_3$, there exists a $\tl{\chi}_1 \in \rC(X,R)$ such that $\pi_{X,R,K_1 \cup K_3}(\tl{\chi}_1) = \chi_1$ and $\n{\tl{\chi}_1}_{\rC(X,R)} < 2$. By $\pi_{X,R,K_1}(\tl{\chi}_1) = \pi_{K_1 \cup K_3,R,K_1}(\chi_1) = 0$, we have $\tl{\chi}_1 \in I_{X,R,K_1}$. Put $f_1 \coloneqq (f - f_0) \tl{\chi}_1 \in I_{X,R,K_1}$. For any $x \in K_3$, we have
\begin{eqnarray*}
(f - (f_0 + f_1))(x) = (f - f_0)(x) (1 - \tl{\chi}_1)(x) = (f - f_0)(x) (1 - \chi_1)(x) = 0.
\end{eqnarray*}
For any $x \in X \setminus K_3$, we have 
\begin{eqnarray*}
& & \n{(f - (f_0 + f_1))(x)}_R = \n{(f - f_0)(x)(1 - \tl{\chi}_1)(x)}_R \leq \n{(f - f_0)(x)}_R \n{(1 - \tl{\chi}_1)(x)}_R \\
& \leq & \n{(f - f_0)(x)}_R(\n{1}_R + \n{\tl{\chi}_1(x)}_R) < 3^{-1} \epsilon \cdot (1 + 2) = \epsilon.
\end{eqnarray*}
This implies $\n{f - (f_0 + f_1)}_{\rC(X,R)} \leq \epsilon$. Therefore $I_{X,R,K_0} + I_{X,R,K_1}$ is dense in $I_{X,R,K_0 \cap K_1}$.
\end{proof}

\begin{lmm}
\label{intersection}
Let $K_0$ and $K_1$ be closed subsets of $X$. If $\cD_3$ is an admissible triple, then the natural morphism
\begin{eqnarray*}
\rC(K_0,R) \wh{\otimes}_{\rC(X,R)}^{\bL} \rC(K_1,R) \to \rC(K_0,R) \wh{\otimes}_{\rC(X,R)} \rC(K_1,R)
\end{eqnarray*}
in $\Derba_{\cC}(\rC(X,R))$ is an isomorphism, and the morphism
\begin{eqnarray*}
\rC(K_0,R) \wh{\otimes}_{\rC(X,R)} \rC(K_1,R) \to \rC(K_0 \cap K_1,R)
\end{eqnarray*}
in $\Alg_{\cC}(\rC(X,R))$ associated to $\pi_{K_0,R,K_0 \cap K_1}$ and $\pi_{K_1,R,K_0 \cap K_1}$ is an isomorphism.
\end{lmm}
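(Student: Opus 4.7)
The plan is to split the lemma into its two claims and attack them independently.

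For the first claim, I would invoke Lemma \ref{flatness of C(K,R)} to see that $\rC(K_1, R)$ is a strongly flat object of $\Mod_{\cC}(\rC(X,R))$, and then apply Proposition \ref{derived symmetric monoidal structure} (iii) to the pair $(\rC(K_0,R), \rC(K_1,R))$ regarded as chain complexes concentrated in degree zero. This yields that the natural morphism
\begin{eqnarray*}
\Tot(P_{\cC}^{\rC(X,R)}(\rC(K_0,R)) \wh{\otimes}_{\rC(X,R)} P_{\cC}^{\rC(X,R)}(\rC(K_1,R))) \to \rC(K_0, R) \wh{\otimes}_{\rC(X,R)} \rC(K_1, R)
\end{eqnarray*}
in $\Chba_{\cC}(\rC(X,R))$ is a quasi-isomorphism, which is exactly the statement that the derived tensor product maps isomorphically onto the underived one in $\Derba_{\cC}(\rC(X,R))$.

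For the second claim, I would begin with the strictly coexact sequence
\begin{eqnarray*}
\rC(K_0, R) \wh{\otimes}_{\rC(X,R)} I_{X,R,K_1} \to \rC(K_0, R) \to \rC(K_0, R) \wh{\otimes}_{\rC(X,R)} \rC(K_1, R) \to 0
\end{eqnarray*}
obtained by tensoring the strictly coexact sequence $I_{X,R,K_1} \to \rC(X,R) \to \rC(K_1, R) \to 0$ (furnished by Lemma \ref{Tietze for flat setting} and Lemma \ref{flatness of C(K,R)}) with $\rC(K_0, R)$ over $\rC(X,R)$ and using the right strong exactness of $\rC(K_0, R) \wh{\otimes}_{\rC(X,R)} (\cdot)$ together with $\rC(K_0, R) \wh{\otimes}_{\rC(X,R)} \rC(X,R) \cong \rC(K_0, R)$. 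The desired tensor product is thereby identified with the cokernel of the first arrow.

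The key step is to identify the image of this arrow inside $\rC(K_0, R)$ with the closed ideal $I_{K_0, R, K_0 \cap K_1}$. One inclusion is visible: the image is contained in $\pi_{X,R,K_0}(I_{X,R,K_1}) \cdot \rC(K_0, R) \subset I_{K_0, R, K_0 \cap K_1}$ since restrictions to $K_0$ of elements vanishing on $K_1$ vanish on $K_0 \cap K_1$. For the reverse inclusion, I would apply Lemma \ref{Tietze for flat setting} to the closed subset $K_0 \cup K_1 \subset X$: given $g \in I_{K_0, R, K_0 \cap K_1}$, the map $K_0 \cup K_1 \to R$ that equals $g$ on $K_0$ and $0$ on $K_1$ is well-defined and continuous, the two definitions agreeing on $K_0 \cap K_1$, and hence extends to some $\tl g \in \rC(X,R)$ with $\tl g \in I_{X,R,K_1}$ and $\pi_{X,R,K_0}(\tl g) = g$. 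Thus the image equals $I_{K_0, R, K_0 \cap K_1}$, which is already closed, so the cokernel is $\rC(K_0, R)/I_{K_0, R, K_0 \cap K_1}$. Since $(\cC, K_0, R)$ is again an admissible triple ($K_0$ is closed in $X$, hence totally disconnected in the non-Archimedean case), Lemma \ref{flatness of C(K,R)} identifies this cokernel canonically with $\rC(K_0 \cap K_1, R)$, and an unwinding of the definitions shows the resulting isomorphism is the one induced by $\pi_{K_0,R,K_0 \cap K_1}$ and $\pi_{K_1,R,K_0 \cap K_1}$. The main technical input is Tietze extension applied to the union $K_0 \cup K_1$, which produces an actual preimage and thus bypasses any density argument.
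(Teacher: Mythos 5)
Your proof is correct. The first claim is handled exactly as in the paper: strong flatness of $\rC(K_1,R)$ from Lemma \ref{flatness of C(K,R)} together with Proposition \ref{derived symmetric monoidal structure} (iii) applied to the two algebras viewed as complexes concentrated in degree zero. For the second claim you take a genuinely different route. The paper identifies $\rC(K_0,R) \wh{\otimes}_{\rC(X,R)} \rC(K_1,R)$ with $\rC(X,R)/\ol{I_{X,R,K_0}+I_{X,R,K_1}}$ and then invokes Lemma \ref{subset operation and aritmetic operation}, whose proof is an approximation argument (bounded approximate units and cutoff functions) showing only that $I_{X,R,K_0}+I_{X,R,K_1}$ is \emph{dense} in $I_{X,R,K_0 \cap K_1}$. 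You instead tensor the strict presentation $I_{X,R,K_1} \to \rC(X,R) \to \rC(K_1,R) \to 0$ with $\rC(K_0,R)$ and compute the image of the resulting first arrow exactly: Lemma \ref{Tietze for flat setting} applied to the closed set $K_0 \cup K_1$ produces, for each $g \in I_{K_0,R,K_0 \cap K_1}$, an honest preimage $\tl{g} \in I_{X,R,K_1}$ with $\pi_{X,R,K_0}(\tl{g}) = g$ (the glued function on $K_0 \cup K_1$ is continuous by the closed gluing lemma, and well defined because $g$ vanishes on $K_0 \cap K_1$). Hence the set-theoretic image is the already closed ideal $I_{K_0,R,K_0 \cap K_1}$ on the nose, the cokernel is $\rC(K_0,R)/I_{K_0,R,K_0 \cap K_1}$, and your observation that $(\cC,K_0,R)$ is again an admissible triple lets Lemma \ref{flatness of C(K,R)} finish. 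Your route buys a sharper intermediate statement, namely $\pi_{X,R,K_0}(I_{X,R,K_1}) = I_{K_0,R,K_0 \cap K_1}$ exactly, and bypasses the density argument entirely; the paper's route keeps the computation symmetric in $K_0$ and $K_1$ and isolates the ideal-theoretic content in a separate lemma. Both are valid.
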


\begin{proof}
The assertion immediately follows from Proposition \ref{derived symmetric monoidal structure} (iii), Lemma \ref{subset operation and aritmetic operation}, and Lemma \ref{flatness of C(K,R)}.
\end{proof}

\begin{proof}[Proof of Theorem \ref{relation of cover by subsets}]
The equivalence between the conditions (i) and (ii) immediately follows from the definition of a $\cC$-cover. The equivalence between the conditions (iii) and (iv) follows from Proposition \ref{relation between derived covers and non-derived covers} and Lemma \ref{intersection}. It remains to show that (ii) is equivalent to (iii). Let $S_0$ be a finite subset of $S$. We denote by $T_{S_0}$ the (non-derived) Tate--\Cech complex
\begin{eqnarray*}
0 \to \rC(X,R) \to \prod_{(K,j) \in S_0} \rC(K,R) \to \prod_{(K_i,j_i)_{i=0}^{1} \in [S_0]^2} \rC(K_0,R) \wh{\otimes}_{\rC(X,R)} \rC(K_1,R) \to \cdots
\end{eqnarray*}
associated to $\Gamma^{\cD_3}_*(S_0)$. Put $X_0 \coloneqq \bigcup_{(K,j) \in S_0} j(K)$. By Theorem \ref{closed immersion with flat kernel}, it suffices to show that $T_{S_0}$ is strictly exact in $\Mod_{\cC}(R)$ if and only if $X_0 = X$.

\vspace{0.1in}
First, suppose $X_0 = X$. Then the exactness (without the strictness) immediately follows from Lemma \ref{intersection}, the exactness of the \Cech complex for (not necessarily continuous) functions, and the gluing lemma asserting that the openness of a subset of a topological space can be tested by the pullback by a finite closed cover. The strictness for the case where the norm of $R$ is trivial follows from the fact that the values of the norms of components of $T_{S_0}$ belong to $\ens{0,1}$. The strictness for the case where the norm of $R$ is non-trivial follows from open mapping theorem (cf.\ \cite{Bou53} Theorem I.3.3/1).

\vspace{0.1in}
Next, suppose that $X_0$ is a proper subset of $X$. Take an $x_1 \in X \setminus X_0$. By the finiteness of $S_0$, $X_0$ is closed. By the regularity of $X$, the characteristic function $\chi \colon X_0 \cup \ens{x_1} \to R$ of $\ens{x_1}$ is continuous. By Lemma \ref{Tietze for flat setting} applied to $K = X_0 \cup \ens{x_1}$, there exists a $\tl{\chi} \in \rC(X,R)$ such that $\pi_{X,R,X_0 \cup \ens{x_1}}(\tl{\chi}) = \chi$. We have $\tl{\chi}(x_1) = \chi(x_1) = 1$, and hence $\tl{\chi} \neq 0$, while we have $\pi_{X,R,j(K)}(\chi) = \pi_{X_0 \cup \ens{x_1},R,j(K)}(\chi) = 0$ for any $(K,j) \in S_0$. Therefore the map $\rC(X,R) \to \prod_{(K,j) \in S_0} \rC(K,R)$ in $T_{S_0}$ is not injective. This implies that $T_{S_0}$ is not strictly exact.
\end{proof}

We obtain an analogue of ``Kiehl's Theorem B'' (cf.\ \cite{Kie66} Hilfssatz 1.5) in rigid geometry.

\begin{crl}
\label{Theorem B}
Let $S$ be a finite set of closed immersions into $X$. If $\cD_3$ is an admissible triple, then the following are equivalent:
\begin{itemize}
\item[(i)] The equality $\bigcup_{(K,j) \in S} j(K) = X$ holds.
\item[(ii)] For any $M \in \Mod_{\cC}(\rC(X,R))$, the (non-derived) Tate--\Cech complex
\begin{eqnarray*}
0 \to M & \to & \prod_{(K,j) \in S} M \wh{\otimes}_{\rC(X,R)} \rC(K,R) \\
& \to & \prod_{(K_i,j_i)_{i=0}^{1} \in [S]^2} M \wh{\otimes}_{\rC(X,R)} \rC(K_0,R) \wh{\otimes}_{\rC(X,R)} \rC(K_1,R) \to \cdots
\end{eqnarray*}
of $M$ associated to $S$ is strictly exact in $\Mod_{\cC}(R)$.
\item[(iii)] For any $M \in \Derba_{\cC}(\rC(X,R))$, the total complex of the derived Tate--\Cech complex
\begin{eqnarray*}
0 \to M & \to & \prod_{(K,j) \in S} M \wh{\otimes}_{\rC(X,R)}^{\bL} \rC(K,R) \\
& \to & \prod_{(K_i,j_i)_{i=0}^{1} \in [S]^2} M \wh{\otimes}_{\rC(X,R)}^{\bL} \rC(K_0,R) \wh{\otimes}_{\rC(X,R)}^{\bL} \rC(K_1,R) \to \cdots
\end{eqnarray*}
of $M$ associated to $S$ is strictly exact in $\Mod_{\cC}(R)$.
\end{itemize}
\end{crl}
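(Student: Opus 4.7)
My plan is to deduce the three equivalences by combining Theorem \ref{relation of cover by subsets}, which already treats the case $M = \rC(X,R)$, with a flatness-driven bootstrap that propagates strict exactness to arbitrary $M$ and transfers it between the derived and non-derived settings. Throughout I write $T$ for the non-derived Tate--\Cech complex of $\rC(X,R)$ associated to $S$.

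The key preliminary I would establish is that $T$ is termwise strongly flat over $\rC(X,R)$ and that the canonical morphism from the derived Tate--\Cech complex of $\rC(X,R)$ to $T$ is a quasi-isomorphism. By iterating Lemma \ref{intersection} and invoking Lemma \ref{flatness of C(K,R)} applied to the closed subset $K_0 \cap \cdots \cap K_n \subset X$, I get the isomorphism
\begin{eqnarray*}
\rC(K_0,R) \wh{\otimes}_{\rC(X,R)}^{\bL} \cdots \wh{\otimes}_{\rC(X,R)}^{\bL} \rC(K_n,R) \cong \rC(K_0 \cap \cdots \cap K_n, R)
\end{eqnarray*}
in $\Derba_{\cC}(\rC(X,R))$, together with the fact that the right-hand side is strongly flat over $\rC(X,R)$. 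Termwise quasi-isomorphism of bicomplexes then propagates to a quasi-isomorphism of total complexes by Proposition \ref{derived symmetric monoidal structure} (iv).

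For (ii)$\Rightarrow$(i) I would specialise to $M = \rC(X,R)$: the non-derived Tate--\Cech complex of $M$ is literally $T$, and Theorem \ref{relation of cover by subsets} identifies its strict exactness with (i). For (i)$\Rightarrow$(ii) the same theorem shows that $T$ is strictly exact, so $T$ is isomorphic to the zero object of $\Derba_{\cC}(\rC(X,R))$ by Corollary \ref{derived isom is qis} (iii). Applying $M \wh{\otimes}_{\rC(X,R)}^{\bL} (\cdot)$ yields strict exactness of $M \wh{\otimes}_{\rC(X,R)}^{\bL} T$; by termwise strong flatness of $T$ and Proposition \ref{derived symmetric monoidal structure} (iii), this complex is quasi-isomorphic to the non-derived Tate--\Cech complex $M \wh{\otimes}_{\rC(X,R)} T$ of $M$, proving (ii).

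For (ii)$\Leftrightarrow$(iii), applying $M \wh{\otimes}_{\rC(X,R)}^{\bL} (\cdot)$ to the termwise quasi-isomorphism between the derived and non-derived Tate--\Cech complexes of $\rC(X,R)$ and using Proposition \ref{derived symmetric monoidal structure} (iv) once more identifies the derived Tate--\Cech complex of $M$, up to quasi-isomorphism, with $M \wh{\otimes}_{\rC(X,R)}^{\bL} T$; by termwise flatness of $T$ this agrees with $M \wh{\otimes}_{\rC(X,R)} T$, which is precisely the non-derived Tate--\Cech complex of $M$, so strict exactness transfers freely between (ii) and (iii). The main technical obstacle I anticipate is the bookkeeping required to verify that the derived Tate--\Cech complex, defined as the total of a bicomplex of projective resolutions, is genuinely quasi-isomorphic at the chain level (and not merely derived-equivalent) to $M \wh{\otimes}_{\rC(X,R)}^{\bL} T$; this should reduce to two careful applications of Proposition \ref{derived symmetric monoidal structure} (iv) once the boundedness of the bicomplex in both directions is checked.
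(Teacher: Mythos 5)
Your proposal is correct and follows essentially the same route as the paper: deduce (ii)$\Rightarrow$(i) and (iii)$\Rightarrow$(i) by specialising to $M = \rC(X,R)$ and invoking Theorem \ref{relation of cover by subsets}, identify the non-derived Tate--\Cech complex of $\rC(X,R)$ with the termwise strongly flat \Cech complex of $(j(K))_{(K,j)\in S}$ via Lemma \ref{intersection} and Lemma \ref{flatness of C(K,R)}, and then use Proposition \ref{derived symmetric monoidal structure} (iii) and (iv) together with Corollary \ref{derived isom is qis} (iii) to transfer strict exactness to $M \wh{\otimes}_{\rC(X,R)} C$ and to the total complex of the derived Tate--\Cech complex. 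The only cosmetic difference is that the paper proves (i)$\Rightarrow$(iii) directly from (ii) rather than routing through an a priori equivalence (ii)$\Leftrightarrow$(iii), but the underlying computations are identical.
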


\begin{proof}
By Theorem \ref{relation of cover by subsets}, both of the conditions (ii) and (iii) applied to the case where $M$ is the regular $\rC(X,R)$-module object imply the condition (i). We show that the condition (i) implies the conditions (ii) and (iii). We denote by $C$ the \Cech complex
\begin{eqnarray*}
0 \to \rC(X,R) \to \prod_{(K,j) \in S} \rC(j(K),R) \to \prod_{(K_i,j_i)_{i=0}^{1} \in [S]^2} \rC(j_0(K_0) \cap j_1(K_1),R) \to \cdots
\end{eqnarray*}
associated to $(j(K))_{(K,j) \in S}$, which is naturally isomorphic in $\Chba_{\cC}(\rC(X,R))$ to (and hence will be identified with) the (non-derived) Tate--\Cech of $\rC(X,R)$ associated to $S$ by Lemma \ref{intersection}, and hence is strictly exact by the condition (i) and Theorem \ref{relation of cover by subsets}. By Lemma \ref{flatness of C(K,R)}, $C$ is termwise strongly flat. Therefore for any $M \in \Mod_{\cC}(\rC(X,R))$, $M \wh{\otimes}_{\rC(X,R)} C$ is isomorphic to $0$ in $\Derba_{\cC}(\rC(X,R))$ by Proposition \ref{derived symmetric monoidal structure} (ii), and hence is strictly exact by Proposition \ref{derived isom is qis} (iii). This implies the assertion (ii).

\vspace{0.1in}
Let $M \in \Derba_{\cC}(\rC(X,R))$. We denote by $C(M) \in \Chba(\Derba_{\cC}(\rC(X,R)))$ the derived Tate--\Cech complex of $M$ associated to $S$. The natural morphism $C(M) \to M \wh{\otimes}_{\rC(X,R)} C$ in $\Chba(\Derba_{\cC}(\rC(X,R)))$ is represented by a morphism in $\Chba(\Chba_{\cC}(\rC(X,R)))$ which is a termwise quasi-isomorphism by the termwise strong flatness of $C$ and Proposition \ref{derived symmetric monoidal structure} (iii), and the zero morphism $M \wh{\otimes}_{\rC(X,R)} C \to 0$ in $\Chba(\Chba_{\cC}(\rC(X,R)))$ is a termwise quasi-isomorphism by the assertion (ii). This implies that the zero morphism $\Tot(C(M)) \to 0$ in $\Chba_{\cC}(\rC(X,R))$ is a quasi-isomorphism by Proposition \ref{derived symmetric monoidal structure} (iv). Therefore $\Tot(C(M))$ is strictly exact by Proposition \ref{derived isom is qis} (iii). This implies the assertion (iii).
\end{proof}

\subsection{Acyclicity for a general topological cover}
\label{Acyclicity for a general topological cover}

We next compare general $\cC$-covers and derived covers of the Banach algebra $\rC(X,R)$ in the case when $\cD_3 = (\cC, X, R)$ forms an admissible triple. For this purpose, we will use the following proposition:

\begin{prp}
\label{bijectivity for morphism classes}
If $\cD_3$ is an admissible triple, then for any pair $(Y,\phi)$ of a compact Hausdorff topological space $Y$ and a morphism $\phi \colon \rC(X,R) \to \rC(Y,R)$ in $\Alg_{\cC}(R)$, there exists a unique continuous map $\Phi \colon Y \to X$ such that $\phi$ coincides with the pre-composition with $\Phi$.
\end{prp}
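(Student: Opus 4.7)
The plan is to construct $\Phi \colon Y \to X$ pointwise via the character--point correspondence for $\rC(X,R)$, and then verify continuity and uniqueness. For each $y \in Y$, let $\r{ev}_y \colon \rC(Y,R) \to R$ denote evaluation at $y$, which is a morphism in $\Alg_{\cC}(R)$, and set $\chi_y \coloneqq \r{ev}_y \circ \phi \colon \rC(X,R) \to R$. Then $\chi_y$ is a morphism in $\Alg_{\cC}(R)$, i.e., a bounded unital $R$-algebra homomorphism.

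\vspace{0.1in}
The first key step is to show that the evaluation map $X \to \Hom_{\Alg_{\cC}(R)}(\rC(X,R),R)$, $x \mapsto \r{ev}_x$, is a bijection. In the Archimedean case (when $R$ is $\R$ or $\C$ equipped with $\v{\cdot}_{\infty}$), this is a form of the \GelfandNaimark theorem, as already invoked in the proof of Proposition \ref{epimorphism of C*-algebra}. In the non-Archimedean case (when $R$ is a finite field or a local field and $X$ is totally disconnected), the argument proceeds as follows. For any morphism $\chi \colon \rC(X,R) \to R$ in $\Alg_{\cC}(R)$ and any clopen $U \subset X$, the characteristic function $\chi_U \in \rC(X,R)$ is idempotent, so $\chi(\chi_U)$ is an idempotent of the field $R$, and hence lies in $\ens{0,1}$. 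The collection $\set{U \in \CO(X)}{\chi(\chi_U) = 1}$ is an ultrafilter on the Boolean algebra $\CO(X)$, which by Stone duality for totally disconnected compact Hausdorff spaces is represented by a unique point $x \in X$. Thus $\chi$ and $\r{ev}_x$ agree on every locally constant function, and because locally constant functions are dense in $\rC(X,R)$ by the total disconnectedness of $X$ and the (trivial or non-trivial) valuation on $R$, and both $\chi$ and $\r{ev}_x$ are continuous, they coincide on all of $\rC(X,R)$.

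\vspace{0.1in}
Granted this bijection, define $\Phi(y) \in X$ to be the unique point with $\chi_y = \r{ev}_{\Phi(y)}$; equivalently, $\phi(f)(y) = f(\Phi(y))$ for every $f \in \rC(X,R)$. To check continuity of $\Phi$, observe that for each $f \in \rC(X,R)$ the composition $f \circ \Phi$ coincides with $\phi(f) \in \rC(Y,R)$ and is hence continuous; and the topology on $X$ is the initial topology for the family $\rC(X,R)$ (by Urysohn's lemma in the Archimedean case, and by separation through characteristic functions of clopens together with Lemma \ref{ultranormality} in the totally disconnected non-Archimedean case). Uniqueness of $\Phi$ is immediate: if $\Phi_1,\Phi_2 \colon Y \to X$ both satisfy the defining identity, then $f(\Phi_1(y)) = f(\Phi_2(y))$ for every $f \in \rC(X,R)$ and every $y \in Y$, and the family $\rC(X,R)$ separates points of $X$.

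\vspace{0.1in}
The main potential obstacle is the non-Archimedean case of the character--point bijection. It reduces, however, to the combination of Stone duality for $\CO(X)$, density of locally constant functions in $\rC(X,R)$, and the fact that $R$ has no non-trivial idempotents; these ingredients are available precisely because $\cD_3$ is an admissible triple (so that $X$ is totally disconnected and $R$ is a finite or local field when $\cC = \NBAb$).
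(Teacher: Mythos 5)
Your proof is correct, and in the Archimedean case it coincides with the paper's (both reduce to the \GelfandNaimark theorem). In the non-Archimedean case you take a genuinely different, self-contained route for the key step, namely the identification of bounded characters $\rC(X,R) \to R$ with evaluations at points of $X$. The paper obtains this by citing the description of the Banaschewski compactification as the Berkovich spectrum of $\rC(\cdot,R)$ from \cite{Mih14}: since $X$ is totally disconnected, the evaluation map $\iota_R(X) \colon X \to \cM_R(\rC(X,R))$ is a homeomorphism, and $\Phi$ is defined directly as $\iota_R(X)^{-1} \circ \cM_R(\phi) \circ \iota_R(Y)$. You instead re-prove the relevant special case from scratch: a character sends each idempotent $\chi_U$ with $U \in \CO(X)$ to an element of $\ens{0,1}$ because $R$ is a field, the family $\set{U \in \CO(X)}{\chi(\chi_U)=1}$ is an ultrafilter on the Boolean algebra $\CO(X)$ and hence corresponds to a point of $X$ by Stone duality, and the character agrees with the corresponding evaluation on the dense subalgebra of locally constant functions, hence everywhere by continuity. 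This buys independence from the external reference at the cost of a longer argument, and it makes transparent exactly which hypotheses are used (total disconnectedness of $X$, absence of nontrivial idempotents in the field $R$, and the boundedness of morphisms in $\Alg_{\cC}(R)$, which is what lets you pass from the dense subalgebra to all of $\rC(X,R)$). Your continuity and uniqueness arguments (initial topology and point separation by $\rC(X,R)$) are equivalent to the paper's, which instead verifies $\phi(f) = f \circ \Phi$ formally from the definition of $\iota_R$ and produces an explicit separating function via Lemma \ref{Tietze for flat setting} for uniqueness.
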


\begin{proof}
If $\cC = \BAb$, then the assertion immediately follows from \GelfandNaimark theorem. Suppose $\cC = \NBAb$. By \cite{Mih14} Corollary 2.3, we may replace the functor $\beta_{\cC} \colon \CH \to \TDCH$ by the composition of the functor $\rC(\cdot,R) \colon \CH \to \Alg_{\cC}(R)$ and the Berkovich spectrum functor $\cM_R \colon \Alg_{\cC}(R) \to \CH$, and we can replace $\iota_{\cC}$ by the natural transformation $\iota_R$ given by the evaluation map. Since $\cD_3$ is an admissible triple, $X$ is totally disconnected. This implies that $\iota_R(X)$ is a homeomorphism. Put $\Phi \coloneqq \iota_R(X)^{-1} \circ \cM_R(\phi) \circ \iota_R(Y)$. Then $\Phi$ is continuous by the continuity of $\iota_R(X)^{-1}$, $\cM_R(\phi)$, and $\iota_R(Y)$.

\vspace{0.1in}
Let $f \in \rC(X,R)$. For any $y \in Y$, we have
\begin{eqnarray*}
& & \phi(f)(y) = \iota_R(Y)(y)(\phi(f)) = \cM_R(\phi)(\iota_R(Y)(y))(f) = (\cM_R(\phi) \circ \iota_R(Y))(y)(f) \\
& = & (\iota_R(X) \circ \Phi)(y)(f) = \iota_R(X)(\Phi(y))(f) = f(\Phi(y))
\end{eqnarray*}
by the definition of $\iota_R$. This implies $\phi(f) = f \circ \Phi$. Therefore $\phi$ coincides with the pre-composition with $\Phi$. This completes the proof of the existence of a desired $\Phi$. We show its uniqueness. Let $\Phi'$ be a continuous map $Y \to X$ such that $\phi$ coincides with the pre-composition with $\Phi'$. Assume $\Phi \neq \Phi'$. Take a $y_0 \in Y$ with $\Phi(y_0) \neq \Phi'(y_0)$. By the Hausdorffness of $X$, the characteristic function $\chi \colon \ens{\Phi(y_0),\Phi'(y_0)} \to R$ of $\ens{\Phi'(y_0)}$ is continuous. By Lemma \ref{Tietze for flat setting}, there exists a $\tl{\chi} \in \rC(X,R)$ such that $\pi_{X,R,\ens{\Phi(y_0),\Phi'(y_0)}}(\tl{\chi}) = \chi$. We have
\begin{eqnarray*}
\chi(\Phi(y_0)) = \tl{\chi}(\Phi(y_0)) = (\tl{\chi} \circ \Phi)(y_0) = \phi(\tl{\chi})(y) = (\tl{\chi} \circ \Phi')(y_0) = \tl{\chi}(\Phi'(y_0)) = \chi(\Phi'(y_0)),
\end{eqnarray*}
but this contradicts $\chi(\Phi(y_0)) = 0 \neq 1 = \chi(\Phi'(y_0))$. This implies $\Phi = \Phi'$.
\end{proof}

Suppose that $\cD_2$ is an admissible pair. Put $\wh{\cD}_3 \coloneqq (\cC,\beta_{\cC}(X),R)$. Then $\wh{\cD}_3$ is an admissible triple, and hence Proposition \ref{bijectivity for morphism classes} is applicable to $\wh{\cD}_3$. For an object $A = (\rC(Y,R),\phi)$ of $C^*_{(\BAb,X,R)}$, we denote by $\Gamma_{\cD_3}^*(\phi)$ the continuous map $\Phi \colon Y \to \beta_{\cC}(X)$ such that $\phi \circ C^*_{\wh{\cD}_3}(\iota_{\cC}(X))$ coincides with the pre-composition with $\Phi$, which exists and is unique by Proposition \ref{bijectivity for morphism classes} applied to $\wh{\cD}_3$, and put $\Gamma_{\cD_3}^*(A) \coloneqq (Y,\Gamma_{\cD_3}^*(\phi)) \in \CH_{\cC}/\beta_{\cC}(X)$. For a subset $T \subset C^*_{(\BAb,X,R)}$, we put $\Gamma_{\cD_3}^*(T) \coloneqq \set{\Gamma_{\cD_3}^*(A)}{A \in T} \subset \CH_{\cC}/\beta_{\cC}(X)$. For a subset $S \subset \CH/X$, we put $\iota_{\cC} \circ S \coloneqq \set{(Y,\iota_{\cC}(X) \circ \Phi)}{(Y,\Phi) \in S} \subset \CH/\beta_{\cC}(X)$ and $\beta_{\cC}(S) \coloneqq \set{(\beta_{\cC}(Y),\beta_{\cC}(\Phi))}{(Y,\Phi) \in S} \subset \CH_{\cC}/\beta_{\cC}(X)$.

\begin{thm}
\label{relation of cover}
Let $(\Theta,\Delta)$ be either $(\CH_{\cC},\cD_3)$ or $(\CH,(\BAb,X,R))$. If $\cD_2$ is an admissible pair, then the following hold:
\begin{itemize}
\item[(i)] For any subset $S \subset \Theta/X$, the following are equivalent:
  \begin{itemize}
  \item[(i-i)] The set $\Gamma^{\Delta}_*(S)$ is a non-derived cover of $\rC(X,R)$ in $\cC$.
  \item[(i-ii)] The set $\Gamma^{\Delta}_*(S)$ is a derived cover of $\rC(X,R)$ in $\cC$.
  \item[(i-iii)] The set $\iota_{\cC}(X) \circ S$ is a $\cC$-cover of $\beta_{\cC}(X)$.
  \item[(i-iv)] The set $\beta_{\cC}(S)$ is a $\cC$-cover of $\beta_{\cC}(X)$.
  \item[(i-v)] The set $S$ is a $\cC$-cover of $X$.
  \end{itemize}
\item[(ii)] For any subset $T \subset C^*_{\Delta}$, the following are equivalent:
  \begin{itemize}
  \item[(ii-i)] The set $T$ is a non-derived cover of $\rC(X,R)$ in $\cC$.
  \item[(ii-ii)] The set $T$ is a derived cover of $\rC(X,R)$ in $\cC$.
  \item[(ii-iii)] The set $\Gamma_{\Delta}^*(T)$ is a $\cC$-cover of $\beta_{\cC}(X)$.
  \end{itemize}
\end{itemize}
\end{thm}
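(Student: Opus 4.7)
My strategy is to reduce both parts to Theorem \ref{relation of cover by subsets} applied to the admissible triple $\wh{\cD}_3 \coloneqq (\cC, \beta_{\cC}(X), R)$. The crucial underlying fact is the canonical isomorphism of Banach $R$-algebras $\rC(X, R) \cong \rC(\beta_{\cC}(X), R)$ obtained by pre-composition with $\iota_{\cC}(X)$: this is tautological for $\cC = \BAb$, and it is the defining universal property of the Banaschewski compactification when $\cC = \NBAb$. More generally, for any $Y \in \CH$ the same universal property yields an isomorphism $\rC(Y, R) \cong \rC(\beta_{\cC}(Y), R)$, and these isomorphisms intertwine pre-composition along $\Phi \colon Y \to X$ with pre-composition along $\beta_{\cC}(\Phi) \colon \beta_{\cC}(Y) \to \beta_{\cC}(X)$ by naturality of $\iota_{\cC}$. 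Consequently $\Gamma^{\Delta}_*(S)$ and $\Gamma^{\wh{\cD}_3}_*(\beta_{\cC}(S))$ coincide up to isomorphism in $\Alg_{\cC}(\rC(X,R))$, and similarly for the subsets entering the definitions of derived and non-derived covers.

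For part (i), I will first dispatch the equivalence of the three purely topological conditions (i-iii), (i-iv), (i-v). A map $\Phi \colon Y \to X$ in $\CH$ is a $\cC$-embedding if and only if $\beta_{\cC}(\Phi)$ is a closed immersion (by the very definition of $\cC$-embedding), and because $\beta_{\cC}$ is idempotent with $\beta_{\cC}(\iota_{\cC}(X)) = \id$, this is also equivalent to $\iota_{\cC}(X) \circ \Phi$ being a $\cC$-embedding. The finite-subcover surjectivity condition transfers across $\beta_{\cC}$ because $\iota_{\cC}(Y)$ has dense image in the compact Hausdorff space $\beta_{\cC}(Y)$ and $\iota_{\cC}(X) \circ \Phi = \beta_{\cC}(\Phi) \circ \iota_{\cC}(Y)$, so the image of $\bigsqcup \beta_{\cC}(\Phi)$ equals the closure of, hence equals, the image of $\iota_{\cC}(X) \circ \bigsqcup \Phi$. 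Once (i-iii)--(i-v) are equivalent, I will invoke Theorem \ref{relation of cover by subsets} for the admissible triple $\wh{\cD}_3$ and the family of closed immersions $\beta_{\cC}(S)$, which yields the equivalence of (i-iv) with $\Gamma^{\wh{\cD}_3}_*(\beta_{\cC}(S))$ being a derived cover or non-derived cover of $\rC(\beta_{\cC}(X), R) \cong \rC(X, R)$ in $\cC$. Transporting along the identifications of the previous paragraph then delivers (i-i) and (i-ii).

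For part (ii), Proposition \ref{bijectivity for morphism classes} applied to $\wh{\cD}_3$ shows that each $A = (\rC(Y, R), \phi) \in T$ is canonically isomorphic in $\Alg_{\cC}(\rC(X, R))$ to $\Gamma^{\wh{\cD}_3}(\Gamma_{\Delta}^*(A))$; hence $T$ and $\Gamma^{\wh{\cD}_3}_*(\Gamma_{\Delta}^*(T))$ coincide up to isomorphism. Part (i) applied to $\wh{\cD}_3$ and the subset $\Gamma_{\Delta}^*(T) \subset \CH_{\cC}/\beta_{\cC}(X)$ then gives the three-way equivalence, where condition (i-iv) of part (i) collapses to ``$\Gamma_{\Delta}^*(T)$ is a $\cC$-cover of $\beta_{\cC}(X)$'' because $\beta_{\cC}$ acts as the identity on $\beta_{\cC}(X)$.

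The main technical obstacle I anticipate is the careful bookkeeping needed to verify that the algebra isomorphisms $\rC(Y, R) \cong \rC(\beta_{\cC}(Y), R)$ are genuine $\rC(X, R)$-algebra isomorphisms, so that the reductions above legitimately transport cover conditions and that $\Gamma^{\Delta}_*(S)$ and $\Gamma^{\wh{\cD}_3}_*(\beta_{\cC}(S))$ agree as subsets of $\Alg_{\cC}(\rC(X, R))$ up to isomorphism. This reduces to the naturality square for $\iota_{\cC}$ together with the adjunction $\beta_{\cC} \dashv (\TDCH \hookrightarrow \CH)$, and should be routine diagram-chasing once the categorical setup is in place.
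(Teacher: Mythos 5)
Your overall strategy --- reduce everything to the admissible triple $(\cC,\beta_{\cC}(X),R)$ via the isomorphisms $\rC(Y,R)\cong\rC(\beta_{\cC}(Y),R)$, settle the purely topological equivalences (i-iii)$\Leftrightarrow$(i-iv)$\Leftrightarrow$(i-v) first, reduce (ii) to (i) through $\Gamma_{\Delta}^*$, and then invoke Theorem \ref{relation of cover by subsets} --- is exactly the paper's route, and those reductions are sound. But there is a genuine gap in the step where you ``invoke Theorem \ref{relation of cover by subsets} for \ldots the family of closed immersions $\beta_{\cC}(S)$.'' Theorem \ref{relation of cover by subsets} is stated only for a set of closed immersions, whereas in part (i) the set $S$ consists of arbitrary continuous maps into $X$. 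In the direction (i-v)$\Rightarrow$(i-i)/(i-ii) you may legitimately assume the maps $\beta_{\cC}(\Phi)$ are closed immersions, since that is part of the definition of a $\cC$-cover. In the converse direction, however, you are handed only the algebraic data that each structure morphism $\rC(X,R)\to\rC(Y,R)$ is an epimorphism (resp.\ homotopy epimorphism) in $\Alg_{\cC}(R)$ together with Tate's acyclicity, and you must \emph{deduce} that each $\beta_{\cC}(\Phi)$ is a homeomorphism onto its image before Theorem \ref{relation of cover by subsets} becomes applicable. Your proposal never addresses this implication.

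This is precisely the content of Lemma \ref{equivalemce of homotopy epimorphism and strict epimorphism} in the paper: for an object $(\rC(Y,R),\pi)$ of $C^*_{\cD_3}$, the morphism $\pi$ is a homotopy epimorphism if and only if $\Gamma_{\cD_3}^*(\pi)$ is a $\cC$-embedding (and if and only if $\pi$ is a strict epimorphism). Its proof is not diagram-chasing: it uses Proposition \ref{homotopy epimorphism is epimorphism} to pass from homotopy epimorphisms to epimorphisms, then the full faithfulness of $\Gamma^{\cD_3}$ (the \GelfandNaimark--type statement of Lemma \ref{Gel'fand--Naimark}, resting on Proposition \ref{bijectivity for morphism classes} and the Berkovich-spectrum description of $\beta_{\cC}$) to convert the epimorphism property into injectivity of the induced map of spaces, and finally compactness to upgrade injectivity to a homeomorphism onto the image. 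Without this lemma your argument establishes only (i-v)$\Rightarrow$(i-i)/(i-ii), not the converse, so the equivalences in (i) (and hence in (ii)) are not proved. The same lemma is also what makes the epimorphism condition in (i-i) and the homotopy-epimorphism condition in (i-ii) interchangeable, which you otherwise obtain only from Proposition \ref{relation between derived covers and non-derived covers} after the topological identification is already in hand.
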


In order to show Theorem \ref{relation of cover}, we prepare several lemmata.

\begin{lmm}
\label{reduction to NBAb}
Let $Y$ be a compact Hausdorff topological space. If $\cD_2$ is an admissible pair, then the following hold:
\begin{itemize}
\item[(i)] The map $\iota_{\cC}(Y)$ is a surjective $\cC$-embedding.
\item[(ii)] The composition $\rC(\beta_{\cC}(Y),R) \to \rC(Y,R)$ with $\iota_{\cC}(Y)$ is an isometric isomorphism.
\end{itemize}
\end{lmm}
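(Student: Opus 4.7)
The plan is to treat the two cases of an admissible pair separately. When $\cC = \BAb$, the functor $\beta_{\cC}$ is the identity and $\iota_{\cC}$ is the identity natural transformation, so both (i) and (ii) are tautologies. I henceforth concentrate on $\cC = \NBAb$, where by the definition of an admissible pair $R$ is either a finite field with trivial valuation or a local field, and in either case $R$ is totally disconnected (ultrametric with clopen balls, respectively discrete); meanwhile $\beta_{\cC}$ is the Banaschewski compactification, left adjoint to the inclusion $\TDCH \hookrightarrow \CH$ with unit $\iota_{\cC}$.

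For (i), I would first invoke the adjunction: since $\beta_{\cC}$ is reflective with unit $\iota_{\cC}$, the triangle identity forces $\beta_{\cC}(\iota_{\cC}(Y))$ to be an isomorphism in $\TDCH$, which in particular is a homeomorphism onto its image, so $\iota_{\cC}(Y)$ is an $\NBAb$-embedding in the sense of \S\ref{Topological covers}. For surjectivity, the image $\iota_{\cC}(Y)(Y)$ is compact and therefore closed in $\beta_{\cC}(Y)$. If it were a proper subset, its complement would be a nonempty open subset of a totally disconnected compact Hausdorff space and would hence contain a nonempty clopen $U$; the characteristic function $\chi_U \colon \beta_{\cC}(Y) \to \ens{0,1} \subset R$ is continuous and restricts along $\iota_{\cC}(Y)$ to the zero map on $Y$. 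The universal property of $\beta_{\cC}$ applied to the zero map $Y \to \ens{0,1}$ then forces $\chi_U$ to be zero, contradicting $U \ne \emptyset$.

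For (ii), pre-composition with $\iota_{\cC}(Y)$ is a well-defined bounded $R$-algebra homomorphism of operator norm $\leq 1$, and the surjectivity of $\iota_{\cC}(Y)$ established in (i) promotes it to an isometric embedding, since the supremum norm of any $g \in \rC(\beta_{\cC}(Y), R)$ agrees with the supremum norm of $g \circ \iota_{\cC}(Y)$ because the two are taken over the same set of values. For the inverse, given $f \in \rC(Y, R)$, the image $f(Y)$ is compact, and as a subspace of the totally disconnected Hausdorff space $R$ it is itself a totally disconnected compact Hausdorff space. The universal property of $\beta_{\cC}$ then produces a unique $\tl{f} \colon \beta_{\cC}(Y) \to f(Y) \hookrightarrow R$ with $\tl{f} \circ \iota_{\cC}(Y) = f$, which yields the required inverse.

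The only nontrivial step is the density of $\iota_{\cC}(Y)(Y)$ in (i); once this surjectivity is in hand, (ii) is a routine application of the universal property of $\beta_{\cC}$ together with the observation that sup norms transfer faithfully across a surjection. The main obstacle is therefore verifying that total disconnectedness of $\beta_{\cC}(Y)$ together with the adjunction rules out the existence of a point of $\beta_{\cC}(Y)$ outside the image, which is where the clopen $U$ and its characteristic function are doing the essential work.
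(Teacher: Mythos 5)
Your proof is correct, and it takes a genuinely different route from the paper's. The paper disposes of the non-Archimedean case entirely by citation: assertion (i) is deduced from the explicit construction of $\beta_{\cC}$ in \cite{Mih14} Lemma 1.8 together with the compactness of $Y$, and assertion (ii) from \cite{Mih14} Corollary 2.2 and Corollary 3.6 (iii), all of which rest on the realisation of the Banaschewski compactification as the Berkovich spectrum of the algebra of bounded continuous functions. You instead work purely from the adjunction $\beta_{\cC} \dashv (\TDCH \hookrightarrow \CH)$: the triangle identity plus full faithfulness of the inclusion gives that $\beta_{\cC}(\iota_{\cC}(Y))$ is an isomorphism (hence the $\cC$-embedding property), density of the image is forced by testing against characteristic functions of clopen sets via the uniqueness clause of the universal property, and the inverse in (ii) is manufactured by factoring each $f \in \rC(Y,R)$ through its totally disconnected compact image $f(Y) \subset R$. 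All the individual steps check out: the image of $\iota_{\cC}(Y)$ is closed by compactness, clopen sets form a basis of a totally disconnected compact Hausdorff space, and $R$ is indeed totally disconnected in both branches of the non-Archimedean admissible-pair hypothesis. What your approach buys is a self-contained argument that never needs the explicit model of $\beta_{\cC}$ and in fact works verbatim for any reflection of $\CH$ onto $\TDCH$; what the paper's approach buys is brevity and consistency with the rest of the text, which repeatedly invokes the Berkovich-spectrum description of $\beta_{\cC}$ from \cite{Mih14} (e.g.\ in Proposition \ref{bijectivity for morphism classes} and Lemma \ref{Gel'fand--Naimark}).
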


\begin{proof}
The claim for $\cC = \BAb$ is obvious. Suppose $\cC = \NBAb$. The assertion (i) follows from the construction of $\beta_{\cC}$ in \cite{Mih14} Lemma 1.8 and the compactness of $Y$, and the assertion (ii) follows from \cite{Mih14} Corollary 2.2 and \cite{Mih14} Corollary 3.6 (iii).
\end{proof}

We recall that at the beginning of this section, we defined the functor $\Gamma^{\cD^3} \colon \CH_{\cC}/X \to C^*_{\cD_3}$ assigning to each $(Y,\Phi) \in \CH_{\cC}/X$ the Banach ring $\rC(Y,R)$ equipped with the pre-composition $\rC(X,R) \to \rC(Y,R)$ with $\Phi$. We give a partial extension of \GelfandNaimark theorem.

\begin{lmm}
\label{Gel'fand--Naimark}
If $\cD_3$ is an admissible triple, then the functor $\Gamma^{\cD_3}$ gives a categorical equivalence between $(\CH_{\cC}/X)^{\op}$ and the essential image of the inclusion $C^*_{\cD_3} \hookrightarrow \Alg_{\cC}(\rC(X,R))$, and the functor represented by $R$ gives its quasi-inverse.
\end{lmm}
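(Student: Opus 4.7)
The plan is to verify that $\Gamma^{\cD_3}$ is fully faithful and essentially surjective onto $C^*_{\cD_3}$; essential surjectivity onto the essential image of the inclusion $C^*_{\cD_3} \hookrightarrow \Alg_{\cC}(\rC(X,R))$ follows because essential images are closed under isomorphism by definition. Essential surjectivity onto $C^*_{\cD_3}$ itself is tautological: every $A = (\rC(Y,R),\phi) \in C^*_{\cD_3}$ has $Y \in \CH_{\cC}$ and $\phi$ a morphism in $\Alg_{\cC}(R)$, so Proposition \ref{bijectivity for morphism classes} (whose hypothesis is met because $\cD_3$ is admissible) produces a unique $\Phi \colon Y \to X$ with $\phi$ equal to pre-composition with $\Phi$, whence $A = \Gamma^{\cD_3}(Y,\Phi)$.

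For fully faithfulness, fix $(Y_0,\Phi_0),(Y_1,\Phi_1) \in \CH_{\cC}/X$. A morphism in $C^*_{\cD_3}$ from $\Gamma^{\cD_3}(Y_1,\Phi_1)$ to $\Gamma^{\cD_3}(Y_0,\Phi_0)$ is a morphism $\phi \colon \rC(Y_1,R) \to \rC(Y_0,R)$ in $\Alg_{\cC}(\rC(X,R))$. Forgetting the $\rC(X,R)$-structure and applying Proposition \ref{bijectivity for morphism classes} with $X$ replaced by $Y_1$ (the triple $(\cC,Y_1,R)$ is admissible because $Y_1 \in \CH_{\cC}$), one obtains a unique continuous $\Psi \colon Y_0 \to Y_1$ such that $\phi$ is pre-composition with $\Psi$. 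The $\rC(X,R)$-linearity of $\phi$ translates, via the uniqueness clause of Proposition \ref{bijectivity for morphism classes} applied to the composite character $\rC(X,R) \to \rC(Y_1,R) \to \rC(Y_0,R)$, exactly to the condition $\Phi_1 \circ \Psi = \Phi_0$. Hence morphisms on both sides match bijectively, and functoriality is immediate.

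For the quasi-inverse, define $\cM(A) \coloneqq \Hom_{\Alg_{\cC}(R)}(A,R)$ topologized by pointwise convergence as a subspace of $R^A$, with structure map $\cM(A) \to X$ sending each character $\chi \colon A \to R$ to the unique point of $X$ corresponding via Proposition \ref{bijectivity for morphism classes} to $\chi$ composed with the structure map $\rC(X,R) \to A$. For $A = \rC(Y,R)$ the evaluation map $Y \to \cM(A)$, $y \mapsto \mathrm{ev}_y$, is bijective by Proposition \ref{bijectivity for morphism classes} (applied with target $R = \rC(\mathrm{pt},R)$) and continuous by the definition of the pointwise topology, hence a homeomorphism by compactness of $Y$, and it visibly commutes with the maps to $X$. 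This provides natural isomorphisms $\id \cong \cM \circ \Gamma^{\cD_3}$ and, via fully faithfulness and essential surjectivity, $\Gamma^{\cD_3} \circ \cM \cong \id$ on the essential image.

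The main obstacle will be verifying that $\cM(A)$ genuinely lies in $\CH_{\cC}$ — that is, compact Hausdorff, and totally disconnected when $\cC = \NBAb$. In the Archimedean case this is standard Gel'fand--Naimark spectrum theory. In the non-Archimedean case it is obtained by identifying $\cM(A)$ with an appropriate subspace of the Berkovich spectrum $\cM_R(A)$ and invoking the results of \cite{Mih14} on Banaschewski compactification that have already been used in Proposition \ref{bijectivity for morphism classes}; this is the only step where the admissibility hypothesis enters non-trivially beyond its role in Proposition \ref{bijectivity for morphism classes}.
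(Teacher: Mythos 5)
Your proof is correct and follows essentially the same route as the paper: both essential surjectivity and full faithfulness are deduced exactly as in the paper from Proposition \ref{bijectivity for morphism classes}, applied to the admissible triples $(\cC,X,R)$ and $(\cC,Y_1,R)$ respectively. The only divergence is cosmetic: where the paper dispatches the Archimedean case by citing the classical \GelfandNaimark theorem and identifies the quasi-inverse with the Berkovich spectrum functor via \cite{Mih14}, you treat both cases uniformly and build the character space $\Hom_{\Alg_{\cC}(R)}(A,R)$ by hand, noting (correctly, and in fact resolving your own stated ``main obstacle'') that once the evaluation map $Y \to \cM(\rC(Y,R))$ is a continuous bijection from a compact space to a Hausdorff subspace of $R^A$, the space $\cM(A)$ inherits membership in $\CH_{\cC}$ from $Y$ with no further input needed.
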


\begin{proof}
If $\cC = \BAb$, the assertion immediately follows from \GelfandNaimark theorem. Suppose $\cC = \NBAb$. We denote by $\ol{C}^*_{\cD_3}$ the essential image of the inclusion $C^*_{\cD_3} \hookrightarrow \Alg_{\cC}(\rC(X,R))$, and by $i \colon (\CH_{\cC}/X)^{\op} \to \ol{C}^*_{\cD_3}$ the functor given by $\Gamma^{\cD_3}$. Then $i$ is essentially surjective by the definition of the object class of $C^*_{\cC}$. For any morphism $\phi \colon \rC(Y_0,R) \to \rC(Y_1,R)$ in $C^*_{\cD_3}$, there exists a unique continuous map $\Phi \colon Y_1 \to Y_0$ such that $\Gamma^{\cD_3}(\Phi) = \Gamma^{(\cC,Y_0,R)}(\Phi) = \phi$ by Proposition \ref{bijectivity for morphism classes} applied to the admissible triple $(\cC,Y_0,R)$. Therefore $i$ is fully faithful. In particular, the restriction of the Berkovich spectrum functor $\Alg_{\cC_{\leq 1}}(R) \to \CH$ gives a quasi-inverse $\ol{C}^*_{\cD_3} \to \CH_{\cC}/X$ by \cite{Mih14} Corollary 2.2 and \cite{Mih14} Corollary 3.6 (iii), and is represented by $R \cong \rC(\ens{0},R)$ by \cite{Mih14} Corollary 3.6 (iv).
\end{proof}

\begin{lmm}
\label{equivalemce of homotopy epimorphism and strict epimorphism}
Let $(\rC(Y,R),\pi)$ be an object of $C^*_{\cD_3}$. If $\cD_2$ is an admissible pair, then the following are equivalent:
\begin{itemize}
\item[(i)] The morphism $\pi$ is a homotopy epimorphism in $\Alg_{\cC}(R)$.
\item[(ii)] The morphism $\pi$ is a strict epimorphism in $\Mod_{\cC}(R)$ and $\Mod_{\cC_{\leq 1}}(R)$.
\item[(iii)] The map $\Gamma_{\cD_3}^*(\pi)$ is a $\cC$-embedding.
\end{itemize}
\end{lmm}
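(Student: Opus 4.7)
The plan is to reduce the statement to the admissible triple $\wh{\cD}_3 \coloneqq (\cC, \beta_{\cC}(X), R)$ and to derive the three equivalences from Theorem \ref{closed immersion with flat kernel} and Lemma \ref{Tietze for flat setting} applied to $\wh{\cD}_3$. By Lemma \ref{reduction to NBAb} (ii) the precomposition with $\iota_{\cC}(X)$ gives an isometric isomorphism $\rC(\beta_{\cC}(X), R) \to \rC(X, R)$ in $\Alg_{\cC}(R)$, transporting $\pi$ to a morphism $\tl{\pi} \colon \rC(\beta_{\cC}(X), R) \to \rC(Y, R)$ in $C^*_{\wh{\cD}_3}$. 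By Proposition \ref{bijectivity for morphism classes} applied to $\wh{\cD}_3$ there is a unique continuous map $\tl{\Phi} \colon Y \to \beta_{\cC}(X)$ with $\tl{\pi}(f) = f \circ \tl{\Phi}$, and by the definition given right after Proposition \ref{bijectivity for morphism classes} this $\tl{\Phi}$ is exactly $\Gamma_{\cD_3}^*(\pi)$. Since an isometric isomorphism of Banach $R$-algebras preserves each of the conditions (i), (ii), (iii), I may work entirely with $\tl{\pi}$ and $\tl{\Phi}$ in the admissible triple setting.

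For the implications (iii) $\Rightarrow$ (i) and (iii) $\Rightarrow$ (ii), suppose $\tl{\Phi}$ is a $\cC$-embedding. Both $Y$ and $\beta_{\cC}(X)$ lie in $\CH_{\cC}$ (when $\cC = \NBAb$, $Y \in \TDCH$ by assumption and $\beta_{\cC}(X) \in \TDCH$ by construction), so $\beta_{\cC}$ acts trivially on $\tl{\Phi}$ and the $\cC$-embedding condition says $\tl{\Phi}$ is a homeomorphism from $Y$ onto a closed subset $K \coloneqq \tl{\Phi}(Y) \subset \beta_{\cC}(X)$. Under this identification $\tl{\pi}$ factors as the restriction $\pi_{\beta_{\cC}(X), R, K}$ followed by the algebra isomorphism $\rC(K, R) \cong \rC(Y, R)$ induced by $\tl{\Phi}$. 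Since $\wh{\cD}_3$ is an admissible triple, Theorem \ref{closed immersion with flat kernel} yields (i) and Lemma \ref{Tietze for flat setting} yields (ii).

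For the converses, I will show (i) $\Rightarrow$ (iii) and (ii) $\Rightarrow$ (iii) simultaneously by first reducing each hypothesis to the assertion that $\tl{\pi}$ is an epimorphism in $\Alg_{\cC}(R)$: in case (i) this is Proposition \ref{homotopy epimorphism is epimorphism}, while in case (ii) $\tl{\pi}$ is in particular surjective as a map of sets, hence an epimorphism in $\Alg_{\cC}(R)$. It then suffices to deduce that $\tl{\Phi}$ is injective, for a continuous injection from the compact space $Y$ into the Hausdorff space $\beta_{\cC}(X)$ is automatically a homeomorphism onto its image, which is the $\cC$-embedding condition in our setting. If distinct $y_0, y_1 \in Y$ satisfied $\tl{\Phi}(y_0) = \tl{\Phi}(y_1)$, then the evaluation morphisms $\r{ev}_{y_0}, \r{ev}_{y_1} \colon \rC(Y, R) \to R$ in $\Alg_{\cC}(R)$ would give $\r{ev}_{y_i}(\tl{\pi}(f)) = f(\tl{\Phi}(y_i))$, hence $\r{ev}_{y_0} \circ \tl{\pi} = \r{ev}_{y_1} \circ \tl{\pi}$; by the epimorphism property we would conclude $\r{ev}_{y_0} = \r{ev}_{y_1}$, contradicting the fact that $\rC(Y, R)$ separates points of $Y$ (by Urysohn's lemma when $\cC = \BAb$, and by characteristic functions of clopen subsets when $\cC = \NBAb$ with $Y \in \TDCH$).

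The main technical obstacle is the reduction in the first paragraph: verifying that the properties of being a homotopy epimorphism in $\Alg_{\cC}(R)$, being a strict epimorphism in $\Mod_{\cC}(R)$ and $\Mod_{\cC_{\leq 1}}(R)$, and yielding a given $\Gamma_{\cD_3}^*$-map all transport cleanly along the isomorphism $\rC(\beta_{\cC}(X), R) \cong \rC(X, R)$; once this is set up, the body of the proof is essentially an invocation of Theorem \ref{closed immersion with flat kernel} and Lemma \ref{Tietze for flat setting} together with the elementary point-separation argument above.
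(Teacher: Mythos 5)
Your proposal is correct and follows essentially the same route as the paper: reduction to the admissible-triple case via Lemma \ref{reduction to NBAb}, the identification of $\pi$ with $\pi_{\beta_{\cC}(X),R,K}$ up to isomorphism so that Theorem \ref{closed immersion with flat kernel} and Lemma \ref{Tietze for flat setting} apply, and Proposition \ref{homotopy epimorphism is epimorphism} plus point-separation of $\rC(Y,R)$ to get injectivity of $\Gamma_{\cD_3}^*(\pi)$. The only cosmetic differences are that you organise the implications as (iii)$\Rightarrow$(i),(ii) and (i),(ii)$\Rightarrow$(iii) rather than the paper's cycle (iii)$\Rightarrow$(ii)$\Rightarrow$(i)$\Rightarrow$(iii), and that you unwind the appeal to Lemma \ref{Gel'fand--Naimark} into an explicit evaluation-map argument.
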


\begin{proof}
By Lemma \ref{reduction to NBAb}, the claims are reduced to the case where $X$ and $Y$ are objects of $\CH_{\cC}$. In particular, $\cD_3$ is an admissible triple. The condition (iii) implies that $\Gamma_{\cD_3}^*(\pi)$ is a homeomorphism onto the image because $\iota_{\cC}(Y)$ is a homeomorphism, and hence implies the condition (ii) by Lemma \ref{Tietze for flat setting}. The condition (ii) implies the condition (i) by Theorem \ref{closed immersion with flat kernel}, because we have
\begin{eqnarray*}
& & \ker(\pi) = \set{f \in \rC(X,R)}{f \circ \Gamma_{\cD_3}^*(\pi) = 0} = \set{f \in \rC(X,R)}{\forall y \in Y, f(\Gamma_{\cD_3}^*(\pi)(y)) = 0} \\
& = & \set{f \in \rC(X,R)}{f |_{\Gamma_{\cD_3}^*(\pi)(Y)} = 0} = I_{X,R,\Gamma_{\cD_3}^*(\pi)(Y)}.
\end{eqnarray*}
Suppose that $\pi$ satisfies the condition (i). Then $\pi$ is an epimorphism in $\Alg_{\cC}(R)$ by Proposition \ref{homotopy epimorphism is epimorphism}, and hence the map $\Hom_{\Alg_{\cC}(R)}(\rC(Y,R),R) \to \Hom_{\Alg_{\cC}(R)}(\rC(X,R),R), \ \phi \mapsto \phi \circ \pi$ is injective. This implies the injectivity of $\Gamma_{\cD_3}^*(\pi)$ by Lemma \ref{Gel'fand--Naimark}. Since $X$ and $Y$ are compact Hausdorff topological spaces, $\Gamma_{\cD_3}^*(\pi)$ is a homeomorphism onto the image. Since $\pi$ coincides with the pre-composition with $\Gamma_{\cD_3}^*(\pi)$, it is a strict epimorphism in $\Mod_{\cC}(\rC(X,R))$ and $\Mod_{\cC_{\leq 1}}(\rC(X,R))$ by Lemma \ref{Tietze for flat setting}.
\end{proof}

\begin{proof}[Proof of Theorem \ref{relation of cover}]
For any subset $T \subset C^*_{\Delta}$, we have $T = \set{\phi \circ \Gamma^{\Delta}_*(\iota_{\cC}(X))^{-1}}{\phi \in \Gamma^{\Delta}_*(\Gamma_{\Delta}^*(T))}$. Therefore the assertion (ii) is reduced to the assertion (i). Since $\iota_{\cC}$ is a natural transformation $\id_{\CH} \Rightarrow \beta_{\cC}$, the condition (i-iii) is equivalent to the condition (i-iv) by Lemma \ref{reduction to NBAb} (ii). Since $\iota_{\cC}$ gives a natural isomorphism $\id_{\CH_{\cC}} \Rightarrow \beta_{\cC} |_{\CH_{\cC}}$, the condition (i-iv) is equivalent to the condition (i-v). By Lemma \ref{reduction to NBAb}, the equivalence of the conditions (i-i), (i-ii), and (i-v) for $(\CH,(\BAb,X,R))$ is reduced to the equivalence of them for $(\CH_{\cC},\cD_3)$. The equivalence of the conditions (i-i), (i-ii), and (i-v) for $(\CH_{\cC},\cD_3)$ immediately follows from Theorem \ref{relation of cover by subsets} and Lemma \ref{equivalemce of homotopy epimorphism and strict epimorphism}.
\end{proof}

\section{Application to derived and non-derived descent}
\label{Application to derived and non-derived descent}

In this last section, we show how the main results of this paper can be applied to the study of questions of descent for the Banach algebras of continuous functions. To explain what this means, we introduce some notions. Let $\cD_3 = (\cC, X, R)$ be an admissible triple. For a $(Y,\Phi)$ in $\CH/ X$, consider the monads $\cTnd(\Phi) = \Phi_* \Phi^*$ on $\Mod_{\cC}(\rC(X,R))$ and $\cTd(\Phi) = \Phi_* \bL \Phi^*$ on $\Derba_{\cC}(\rC(X,R))$ defined by the adjunctions
\begin{eqnarray*}
\Phi^* \colon \Mod_{\cC}(\rC(X,R)) & \leftrightarrows & \Mod_{\cC}(\rC(Y,R)) \colon \Phi_* \\
\bL \Phi^* \colon \Derba_{\cC}(\rC(X,R)) & \leftrightarrows & \Derba_{\cC}(\rC(Y,R)) \colon \Phi_*,
\end{eqnarray*}
where $\Phi^*$ is the scalar extension functor $(\cdot) \wh{\otimes}_{\rC(X,R)} \rC(Y, R)$, $\bL \Phi^*$ is the derived scalar extension functor $(\cdot) \wh{\otimes}_{\rC(X,R)}^{\bL} \rC(Y, R)$, and $\Phi_*$ is the restriction of scalars functor.

\vspace{0.1in}
Let $\cT$ denote $\cTnd(\Phi)$ (resp.\ $\cTd(\Phi)$), and abbreviate $\Mod_{\cC}$ (resp.\ $\Derba_{\cC}$) to $\rM$. We recall that the monad $\cT$ can be thought as a monoid over the monoidal category of endofunctors of $\rM(\rC(X,R))$, where the monoidal structure is given by composition of functors. Therefore, it makes sense to consider $\Mod(\cT)$, i.e.\ the category of left modules for the monad $\cT$. There is an adjunction
\begin{eqnarray*}
F_{\cT} \colon \rM(\rC(X,R)) & \leftrightarrows & \Mod(\cT) \colon U_{\cT},
\end{eqnarray*}
where $F_{\cT}$ is the free module object and $U_{\cT}$ is the forgetful functor. We note that $U_{\cT} \circ F_{\cT} = \cT$. Let us consider the comonad $L_{\cT} \coloneqq F_{\cT} \circ U_{\cT}$ and the category $\CoMod(L_{\cT})$ of left comodules over the comonad $L_{\cT}$. We denote $\CoMod(L_{\cT})$ by $\Desc_{\rM(\rC(X,R))}(\Phi)$, and call it the {\it non-derived} (resp.\ {\it derived}) {\it descent category associated to $\Phi$}. An object of $\Desc_{\rM(\rC(X,R))}(\Phi)$ can be described as a triple $(M, \rho, \sigma)$ of an $M \in \rM(\rC(X,R))$ and morphisms $\rho \colon \cT(M) \to M$ and $\sigma \colon M \to \cT(M)$ in $\rM(\rC(X,R))$ that give respectively the left action of $\cT$ and the left coaction of $L_{\cT}$ and that satisfy some further compatibility properties. See \cite{Bal12} Remark 1.4 for a detailed description of $\Desc_{\rM(\rC(X,R))}(\Phi)$, and \cite{Mes18} Theorem 3.1 for a preceding result on an abstract criterion of effective descent for $\Mod_{\cC_{\leq 1}}(A)$ for an $A \in \Alg_{\cC_{\leq 1}}(R)$ in the case where $R$ is $\R$ or $\C$, which implies that the Banach $A$-algebra $\prod_{e \in E} A/e A$ satisfies the effective descent for Banach modules with submultiplicative norm for any finite orthogonal system $E \subset A$ of idempotents of norm $\leq 1$ with $\sum_{e \in E} e = 1$ by a counterpart of Corollary \ref{clopen cover is derived cover} for $\cC_{\leq 1}$.

\vspace{0.1in}
We define the {\it comparison functor} $Q_{\cT}$ as the functor $\rM(\rC(X,R)) \to \Desc_{\rM(\rC(X,R))}(\Phi)$ assigning to each $M \in \rM(\rC(X,R))$ the tuple $(\cT(M), \epsilon_M, \cT(\eta_M))$, where $\epsilon$ is the counit $\cT \Rightarrow \id_{\rM(\rC(X,R))}$ of the adjunction and $\eta$ is the unit $\id_{\rM(\rC(X,R))} \Rightarrow \cT$ of the adjunction. We say that $\Phi$ {\it satisfies non-derived} (resp.\ {\it derived}) {\it effective descent} if the comparison functor $Q_{\cT}$ is an equivalence. We will apply the notion of the descent category in the following specific case:

\vspace{0.1in}
Let $S$ be a finite set of closed immersions into $X$. We put $Y \coloneqq \coprod_{(K,j) \in S} K$, and denote by $\Phi$ the coproduct $Y \to X$ of $(j)_{(K,j) \in S}$. If $S$ is a $\cC$-cover of $X$, then $\Phi$ is surjective by Theorem \ref{relation of cover by subsets}, and the category $\Desc_{\rM(\rC(X,R))}(\Phi)$ introduced so far is an abstract way of encoding the descent data given by a family $(M_{(K,j)})_{(K,j) \in S} \in \prod_{(K,j) \in S} \rM(\rC(K,R))$ with isomorphisms in $\rM(\rC(K_0,R) \wh{\otimes}_{\rC(X,R)} \rC(K_1,R))$ for all $(K_i,j_i)_{i = 0}^{1} \in S^2$ satisfying the cocycle condition. Therefore, in this case, the question of whether the comparison functor $Q_{\cT} \colon \rM(\rC(X,R)) \to \Desc_{\rM(\rC(X,R))}(\Phi)$ is an equivalence asks if it is possible to construct a unique (up to isomorphism) object of $\rM(\rC(X,R))$ out of a family of objects of $\rM(\rC(K,R))$ that are isomorphic on the intersections in a compatible way. In this situation, we say that $S$ satisfies {\it non-derived} (resp.\ {\it derived}) {\it effective descent} if the comparison functor $Q_{\cT} \colon \rM(\rC(X,R)) \to \Desc_{\rM(\rC(X,R))}(\Phi)$ is an equivalence. As an analogue of the equivalence between finite Banach modules over an affinoid algebra and coherent sheaves over the rigid analytic space associated to it (cf.\ \cite{Kie66} Theorem 1.2), we give our main result on derived and non-derived effective descent.

\begin{thm}
\label{derived effective descent}
The following are equivalent:
\begin{itemize}
\item[(i)] The equality $\bigcup_{(K,j) \in S} j(K) = X$ holds.
\item[(ii)] The functor $\bL \Phi^*$ is faithful.
\item[(iii)] The functor $\bL \Phi^*$ is conservative.
\item[(iv)] The functor $\Phi^*$ is faithful.
\item[(v)] The functor $\Phi^*$ is conservative.
\item[(vi)] The family $S$ satisfies derived effective descent.
\item[(vii)] The family $S$ satisfies non-derived effective descent.
\end{itemize}
\end{thm}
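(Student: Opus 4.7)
The plan is to use condition (i) as the pivot and establish each of (ii)--(vii) as equivalent to it, so that the web of equivalences closes up automatically. Two tools will drive the proof: Kiehl's Theorem B (Corollary \ref{Theorem B}), which gives strict exactness of the non-derived and derived Tate--\Cech complexes once (i) holds, and Balmer's criterion for effective descent (\cite{Bal12} Corollary 3.1), which is available on the triangulated side. The derived statements (ii), (iii), (vi) will be nearly formal consequences of these; the non-derived statements (iv), (v), (vii) will require bridging between $\Derba_{\cC}(\rC(X,R))$ and $\Mod_{\cC}(\rC(X,R))$ via the left t-structure of Schneiders.

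To derive (ii)--(vii) from (i), I would apply Corollary \ref{Theorem B} (ii) and (iii): the augmented Tate--\Cech complex of any $M$ is strictly exact, so if $\Phi^* M = 0$ or $\bL \Phi^* M = 0$, every term beyond $M$ in the relevant complex vanishes and strict exactness forces $M = 0$, yielding conservativity (iii) and (v). The unit $M \to \Phi_* \Phi^* M$ (resp.\ $M \to \Phi_* \bL \Phi^* M$) is a strict monomorphism as the leading edge of a strictly exact complex, which is the standard criterion for faithfulness of a left adjoint, giving (ii) and (iv). Balmer's criterion applied to the tensor-triangulated functor $\bL \Phi^*$ would then upgrade (iii) to derived effective descent (vi).

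For the converse, I would assume (i) fails and pick $x \in X \setminus \bigcup_{(K,j) \in S} j(K)$. The Banach $\rC(X,R)$-module $M \coloneqq \rC(\ens{x}, R)$, viewed as an $\rC(X,R)$-module via $\pi_{X,R,\ens{x}}$, is nonzero, and for every $(K,j) \in S$ Lemma \ref{intersection} yields $\rC(\ens{x}, R) \wh{\otimes}_{\rC(X,R)}^{\bL} \rC(K, R) \cong \rC(\ens{x} \cap j(K), R) = 0$. Thus $\Phi^* M = \bL \Phi^* M = 0$ while $M \ne 0$ and $\id_M \ne 0$, immediately defeating (ii)--(v); the same $M$ gives a descent datum sent to an object isomorphic to $Q(0)$ under both comparison functors, defeating (vi) and (vii).

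The main obstacle will be (vi) $\Rightarrow$ (vii). Starting from a descent datum in $\Mod_{\cC}(\rC(X,R))$, (vi) furnishes a solution $M \in \Derba_{\cC}(\rC(X,R))$ unique up to isomorphism, and the hard part is to show that $M$ is quasi-isomorphic to a single Banach module. I plan to invoke the left t-structure on $\Derba_{\cC}(\rC(X,R))$ with heart $\LH(\Mod_{\cC}(\rC(X,R)))$ together with its cohomology functors. Strong flatness of each $\rC(K,R)$ over $\rC(X,R)$ (Lemma \ref{flatness of C(K,R)}) should make $\bL \Phi^*$ t-exact, and the conservativity secured via (iii) would then force the higher left-heart cohomology of $M$ to vanish. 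A last step using Schneiders' description of $\Mod_{\cC}$ inside $\LH(\Mod_{\cC})$ would identify $M$ with an honest Banach module, completing the reduction of non-derived effective descent to the derived case.
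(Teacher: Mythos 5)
Your overall architecture (pivoting on (i), using Corollary \ref{Theorem B} together with Balmer's criterion on the triangulated side, and passing through the left t-structure for the non-derived descent) is essentially the paper's, and most individual steps are sound: the counterexample $\rC(\ens{x},R)$ when (i) fails, the unit-is-a-monomorphism argument for faithfulness, and the reflection-of-zero-objects argument for (iii). There is, however, one genuine gap: your derivation of (v), the conservativity of $\Phi^*$ on $\Mod_{\cC}(\rC(X,R))$, from the fact that $\Phi^* M = 0$ forces $M = 0$. In a triangulated category reflection of zero objects does give conservativity (apply it to $\Cone(f)$), which is why your argument for (iii) works; but $\Mod_{\cC}(\rC(X,R))$ is quasi-Abelian and not balanced, so a morphism $f$ with $\ker(f) = 0$ and $\coker(f) = 0$ --- which is all you can extract from $\Phi^*(f)$ being an isomorphism together with strong exactness of $\Phi^*$ and reflection of zero objects --- need not be an isomorphism; the inclusion $\ell^1 \hookrightarrow \ell^2$ is the standard counterexample, and this is exactly the subtlety the paper flags in the introduction. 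The paper deduces (v) from (iii), not from (i) directly, via the fully faithful embedding $J_X \colon \Mod_{\cC}(\rC(X,R)) \to \Derba_{\cC}(\rC(X,R))$ and the natural isomorphism $\bL\Phi^* \circ J_X \cong J_Y \circ \Phi^*$ furnished by the strong flatness of $\rC(Y,R)$ (Lemma \ref{flatness of C(K,R)} and Proposition \ref{derived symmetric monoidal structure} (iii)); one concludes in the derived category, which is balanced, and transports back along $J_X$. You need this detour, or an equivalent one, for (v).

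A smaller caution concerns your reduction of (vii) to (vi). The step ``conservativity forces the higher left-heart cohomology of the derived solution to vanish'' does not quite typecheck: the objects $\LH^n(\tl{M})$ live in the left heart $\LH_{\cC}(\rC(X,R))$, not in $\Mod_{\cC}(\rC(X,R))$, so the conservativity you have established for $\Phi^*$ does not apply to them directly. The paper instead shows that the unit $\LH^0(\tl{M}) \to \Phi^*(\LH^0(\tl{M}))$ is a monomorphism in the left heart whose target is isomorphic to $\prod_{(K,j) \in S} M_{(K,j)}$, an object concentrated in degree $0$, and then invokes \cite{Sch99} Proposition 1.2.29 to conclude that $\LH^0(\tl{M})$ lies in the essential image of $\Mod_{\cC}(\rC(X,R))$; you would need to supply this monomorphism (whose construction occupies most of the final part of the paper's proof) to close the argument. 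Finally, note that Balmer's Corollary 3.1 equates derived effective descent with \emph{faithfulness}, i.e.\ (ii) $\Leftrightarrow$ (vi), not with conservativity; since you also establish (ii), this is only a misstatement rather than a gap.
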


In order to show Theorem \ref{derived effective descent}, we recall some basic result of category theory that we need to prove Theorem \ref{derived effective descent}.

\begin{lmm}
\label{Derived category idempotent complete}
For any monoid $A$ of $\cC$, $\rM(A)$ is idempotent complete.
\end{lmm}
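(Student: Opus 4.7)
The plan is to treat the two possible choices $\rM = \Mod_{\cC}$ and $\rM = \Derba_{\cC}$ separately. The non-derived case is elementary while the derived case requires more care.

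First, for $\rM = \Mod_{\cC}$, given an idempotent endomorphism $e \colon M \to M$ in $\Mod_{\cC}(A)$, I would observe that the set-theoretic image $\im(e)$ coincides with $\ker(\id_M - e)$, and is therefore automatically closed in $M$ and hence naturally an object of $\Mod_{\cC}(A)$. The bounded $A$-linear map $p \colon M \to \im(e)$ sending $m$ to $e(m)$, together with the isometric inclusion $i \colon \im(e) \hookrightarrow M$, satisfy $i \circ p = e$ and $p \circ i = \id_{\im(e)}$. This is the desired splitting of $e$, and establishes the idempotent completeness of $\Mod_{\cC}(A)$ in a manner directly analogous to Lemma \ref{idempotent gives direct summand}.

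Second, for $\rM = \Derba_{\cC}$, let $e \colon M \to M$ be an idempotent in $\Derba_{\cC}(A)$. I would first replace $M$ by its functorial projective resolution, so that $e$ is represented by a chain endomorphism $\tl{e}$ that is idempotent up to chain homotopy. Since $M$ is bounded above, the termwise countable coproduct $N \coloneqq \wh{\bigoplus}^{\cC_{\leq 1}}_{n \in \N} M$ remains bounded above and hence belongs to $\Derba_{\cC}(A)$. Then I would construct the mapping telescope $T(\tl{e}) \coloneqq \Cone(\id_N - \sigma)$, where $\sigma$ shifts the $n$-th copy of $M$ in $N$ to the $(n+1)$-th one via $\tl{e}$. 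This realises the homotopy colimit of the diagram $M \xrightarrow{\tl{e}} M \xrightarrow{\tl{e}} \cdots$ inside $\Derba_{\cC}(A)$.

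By a standard triangulated-categorical computation in the presence of countable coproducts, along the lines of Neeman's treatment of homotopy colimits and the splitting of homotopy idempotents via telescopes, one obtains a decomposition $M \cong T(\tl{e}) \oplus T(\id_M - \tl{e})$ in $\Derba_{\cC}(A)$, and the projection onto the first summand provides the desired splitting of $e$. The main obstacle lies in verifying that the termwise coproduct $\wh{\bigoplus}^{\cC_{\leq 1}}$ yields a bona fide coproduct in $\Derba_{\cC}(A)$, which requires that it preserve strict exactness on the chain level; this can alternatively be bypassed by appealing to Schneiders' equivalence $\Derba_{\cC}(A) \simeq \Der^{-}(\LH(\Mod_{\cC}(A)))$ combined with the classical fact that the bounded above derived category of a cocomplete abelian category admits splittings of idempotents by means of the telescope construction.
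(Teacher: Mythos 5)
Your treatment of $\Mod_{\cC}(A)$ is correct and essentially the paper's own argument (the paper records the splitting as the isomorphism $\ker(\pi)\oplus\ker(1-\pi)\to M$, which is the same observation as $\im(e)=\ker(\id_M-e)$). The derived half, however, has a genuine gap. The B\"okstedt--Neeman telescope argument you invoke requires the countable coproduct $N=\wh{\bigoplus}^{\cC_{\leq 1}}_{n\in\N}M$ to be an actual categorical coproduct in $\Derba_{\cC}(A)$, and it is not: as the paper stresses in \S \ref{Derived functors}, $\wh{\bigoplus}^{\cC_{\leq 1}}_{i\in I}$ is not even a functor on $\Mod_{\cC}(R)$ for infinite $I$, because a family of bounded morphisms with unbounded operator norms induces no bounded morphism out of the completed sum. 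The universal property of the coproduct therefore already fails in $\Mod_{\cC}(A)$, hence in $\Chba_{\cC}(A)$ and $\Derba_{\cC}(A)$; so the identification of $\Cone(\id_N-\sigma)$ with a homotopy colimit, the statement that a countable direct sum of distinguished triangles is distinguished, and the decomposition $M\cong T(\tl{e})\oplus T(\id_M-\tl{e})$ all rest on a universal property that is unavailable and cannot simply be quoted. The proposed bypass does not close the gap either: Schneiders' equivalence $\Derba_{\cC}(A)\simeq\Derba(\LH(\Mod_{\cC}(A)))$ does hold, but $\LH(\Mod_{\cC}(A))$ is not known to be cocomplete --- the left heart does not acquire the infinite coproducts that $\Mod_{\cC}(A)$ lacks --- so ``the bounded above derived category of a cocomplete abelian category'' is not the category you end up in.

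The paper sidesteps all of this by citing \cite{BS01} Lemma 2.4, where Balmer and Schlichting split idempotents in the one-sided-bounded derived category of an \emph{arbitrary} exact category, with no coproduct hypothesis: their construction assembles infinitely many shifted copies of the complex linked by $e$ and $1-e$, but the boundedness in one direction guarantees that each fixed degree receives contributions from only finitely many of them, so only finite direct sums are ever formed and the result is an honest object of $\Chba_{\cC}(A)$. That argument applies verbatim to the exact structure of strict short exact sequences on $\Mod_{\cC}(A)$, which is exactly the paper's one-line justification. If you insist on a telescope-style proof you would have to verify by hand that every morphism in Neeman's argument has uniformly bounded components and that the relevant cones retain the needed universal properties; it is far simpler to switch to the Balmer--Schlichting construction.
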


\begin{proof}
The assertion for $\Mod_{\cC}(A)$ follows from the fact that for any morphism $\pi \colon M \to M$ in $\Mod_{\cC}(A)$ with $\pi^2 = \pi$, the addition $\ker(\pi) \oplus \ker(1 - \pi) \to M$ is an isomorphism in $\Mod_{\cC}(A)$. The assertion for $\Derba_{\cC}(A)$ follows from \cite{BS01} Lemma 2.4, where the proof is given for the derived category of chain complexes bounded below of any exact category but the same proof can be given for the derived category of chain complexes bounded above.
\end{proof}

We recall that a morphism in a category is said to be a {\it bimorphism} if it is both a monomorphism and an epimorphism. A category is said to be {\it balanced} if every bimorphism is an isomorphism. The following is the main benefit to consider the triangulated category $\Derba_{\cC}(\rC(X,R))$ even when we are interested in the non-derived descent:

\begin{lmm}
\label{Triangulated balanced}
Every monomorphism in a triangulated category is split. In particular, every triangulated category is balanced.
\end{lmm}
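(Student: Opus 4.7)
The plan is to build the splitting by embedding the monomorphism into a distinguished triangle and using the two defining cancellation properties of triangulated categories (successive morphisms compose to zero, and triangles with a vanishing connecting morphism split). Let $f \colon X \to Y$ be a monomorphism in a triangulated category $\cT$, and complete it to a distinguished triangle
\begin{eqnarray*}
X \stackrel{f}{\to} Y \stackrel{g}{\to} Z \stackrel{h}{\to} X[1].
\end{eqnarray*}
The basic axioms of a triangulated category (cf.\ \cite{Sch99} \S 1.2.1) give that consecutive morphisms compose to zero; by rotation this means in particular that $f \circ h[-1] = 0 \colon Z[-1] \to Y$.

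Since $f$ is a monomorphism, the vanishing of $f \circ h[-1]$ forces $h[-1] = 0$, and because the shift $[-1]$ is an auto-equivalence of $\cT$, this gives $h = 0$. The next step invokes the standard fact that a distinguished triangle whose connecting morphism vanishes is split, i.e.\ isomorphic to the direct sum triangle $X \to X \oplus Z \to Z \stackrel{0}{\to} X[1]$ (this follows from the long exact sequence obtained by applying $\Hom(Z, \cdot)$, which shows that $\id_Z$ lifts along $g$, producing a section of $g$, and hence a retraction $s \colon Y \to X$ of $f$ with $s \circ f = \id_X$). Thus every monomorphism $f$ in $\cT$ is a split monomorphism.

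For the second assertion, suppose in addition that $f$ is an epimorphism. From $s \circ f = \id_X$ we obtain
\begin{eqnarray*}
(f \circ s) \circ f = f \circ (s \circ f) = f = \id_Y \circ f,
\end{eqnarray*}
and by the epimorphism property applied to the two parallel morphisms $f \circ s, \ \id_Y \colon Y \to Y$, we conclude $f \circ s = \id_Y$. Therefore $f$ is an isomorphism, which shows that $\cT$ is balanced.

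The only non-routine step is recognising that the splitting of a distinguished triangle with zero connecting morphism is exactly what produces a retraction of $f$; everything else is a direct application of the axioms. I would expect to cite (or sketch via the $\Hom(Z, \cdot)$ long exact sequence together with Lemma \ref{Derived category idempotent complete} to ensure that direct summands are genuine objects) the splitting lemma for triangulated categories rather than re-prove it here.
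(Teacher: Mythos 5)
Your proof is correct and is essentially the same argument the paper outsources to \cite{Bal12} Remark 2.4: complete $f$ to a distinguished triangle, use the monomorphism property to kill the connecting morphism $h$, split the triangle to obtain a retraction $s$ of $f$, and then the standard observation that a split monomorphism which is also an epimorphism is an isomorphism gives balancedness, just as in the paper's one-line second step. The only superfluous remark is the appeal to Lemma \ref{Derived category idempotent complete}: the complement $Z$ is already present as the cone of $f$, so no idempotent needs to be split here.
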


\begin{proof}
The first assertion follows from \cite{Bal12} Remark 2.4. Since a split monomorphism is not an epimorphism unless the complement direct summand is an initial object, every bimorphism in a triangulated category is an isomorphism.
\end{proof}

\begin{lmm}
\label{Faithful functor from balanced category is conservative}
Let $F \colon C \to D$ be a faithful functor. If $C$ is balanced, then $F$ is conservative.
\end{lmm}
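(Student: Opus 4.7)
The plan is the standard categorical argument: show that $F$, being faithful, reflects both monomorphisms and epimorphisms, hence reflects bimorphisms; since $C$ is balanced, reflecting bimorphisms is the same as reflecting isomorphisms.

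In detail, I would take a morphism $f \colon X \to Y$ in $C$ such that $F(f)$ is an isomorphism in $D$, and argue that $f$ must be a bimorphism. For monomorphy, suppose $g, h \colon Z \to X$ satisfy $f \circ g = f \circ h$. Applying $F$ yields $F(f) \circ F(g) = F(f) \circ F(h)$, and since $F(f)$ is an isomorphism (in particular left-cancellable), this forces $F(g) = F(h)$; then faithfulness of $F$ gives $g = h$. The dual argument, using that $F(f)$ is right-cancellable, shows that $f$ is an epimorphism.

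Thus $f$ is simultaneously a monomorphism and an epimorphism in $C$, i.e., a bimorphism. By the assumption that $C$ is balanced, every bimorphism in $C$ is an isomorphism, so $f$ is an isomorphism in $C$. This shows $F$ is conservative.

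There is no real obstacle here; the only subtle point is that reflecting isomorphisms requires \emph{both} the faithfulness of $F$ (to reflect equalities of morphisms) and the balancedness of $C$ (to promote a bimorphism to an isomorphism), and that no exactness, additivity, or triangulated structure is needed. In the application envisaged in the excerpt, $C$ will be the triangulated category $\Derba_{\cC}(\rC(X,R))$, which is balanced by Lemma \ref{Triangulated balanced}, so this lemma lets one upgrade faithfulness of a derived pullback functor to conservativity, as required for Balmer's effective descent criterion in the proof of Theorem \ref{derived effective descent}.
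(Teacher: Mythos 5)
Your proof is correct and follows exactly the paper's argument: given $f$ with $F(f)$ invertible, note $F(f)$ is both a monomorphism and an epimorphism, use faithfulness to reflect these properties back to $f$, and conclude via balancedness of $C$. You merely spell out the standard verification that a faithful functor reflects monomorphisms and epimorphisms, which the paper takes for granted.
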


\begin{proof}
Suppose that $r$ is a morphism in $C$ such that $F(r)$ is an isomorphism in $D$. In particular, $F(r)$ is a monomorphism and an epimorphism. Since $F$ is faithful, it reflects both monomorphisms and epimorphisms. This implies that $r$ is a bimorphism, and hence is an isomorphism because $C$ is balanced.
\end{proof}

\begin{proof}[Proof of Theorem \ref{derived effective descent}]
By Lemma \ref{Derived category idempotent complete} and \cite{Bal12} Corollary 3.1, the condition (ii) is equivalent to the condition (vi). By Lemma \ref{Triangulated balanced} and Lemma \ref{Faithful functor from balanced category is conservative}, the condition (ii) implies the condition (iii), because $\Derba_{\cC}(\rC(X,R))$ is a triangulated category (cf.\ \cite{Sch99} Definition 1.2.18). The condition (vii) implies the faithfulness of $Q_{\cTnd}$ and hence the condition (iv).

\vspace{0.1in}
First, we show that the condition (ii) (resp.\ (iii)) implies the condition (iv) (resp.\ (v)). For a $Z \in \CH$, we denote by $J_Z \colon \Mod_{\cC}(\rC(Z,R)) \to \Derba_{\cC}(\rC(Z,R))$ the canonical embedding, which is fully faithful by \cite{Sch99} Corollary 1.2.28. Since $\rC(Y,R) \cong \prod_{(K,j) \in S} \rC(K,R)$ is a strongly flat object of $\Mod_{\cC}(\rC(X,R))$ by Lemma \ref{flatness of C(K,R)}, the correspondence assigning to each $M \in \Mod_{\cC}(\rC(X,R))$ the morphism $j_S(M) \colon M \wh{\otimes}_{\rC(X,R)}^{\bL} \rC(K,R) \to M \wh{\otimes}_{\rC(X,R)} \rC(K,R)$ in $\Derba_{\cC}(\rC(X,R))$ gives a natural isomorphism $j_S \colon \bL \Phi^* \circ J_X \Rightarrow J_Y \circ \Phi^*$ by Proposition \ref{derived symmetric monoidal structure} (iii). Let $f$ be a morphism in $\Mod_{\cC}(\rC(X,R))$ such that $\Phi^*(f)$ is a zero morphism (resp.\ an isomorphism) in $\Mod_{\cC}(\rC(Y,R))$. Then $(J_Y \circ \Phi^*)(f)$ is a zero morphism (resp.\ an isomorphism) in $\Der_{\cC}(\rC(Y,R))$, and hence so is $(\bL \Phi^* \circ J_X)(f)$ because $j_S$ is a natural isomorphism. Since $\bL \Phi^*$ is faithful (resp.\ conservative), $J_X(f)$ is a zero morphism (resp.\ an isomorphism) in $\Derba_{\cC}(\rC(X,R))$. Since $J_X$ is fully faithful, $f$ is a zero morphism (resp.\ an isomorphism) in $\Mod_{\cC}(\rC(X,R))$. This implies that $\Phi^*$ is faithful (resp.\ conservative).

\vspace{0.1in}
Secondly, we show that the condition (iv) (resp.\ (v)) implies the condition (i). For this purpose, it suffices to show that if $\Phi$ is not surjective, then $\Phi^*$ is not faithful (resp.\ conservative).  Suppose that an $x \in X$ is not contained in the image of $\Phi$. Let $r$ denote the identity morphism (resp.\ the zero morphism) $\rC(\ens{x},R) \to \rC(\ens{x},R)$ in $\Mod_{\cC}(\rC(X,R))$. By Lemma \ref{intersection}, we have $\Phi^*(\rC(\ens{x},R)) = \ens{0}$ and hence $\Phi^*(r)$ is a zero morphism (resp.\ an isomorphism), while $r$ itself is not a zero morphism (resp.\ an isomorphism). Therefore $\Phi^*$ is not faithful (resp.\ conservative).

\vspace{0.1in}
Thirdly, we show that the condition (i) implies the condition (ii). Since $\Derba_{\cC}(\rC(X,R))$ is an additive category and $\bL \Phi^*$ is an additive functor, it suffices to show that every morphism $f \colon M \to N$ in $\Derba_{\cC}(\rC(X,R))$ with $\bL \Phi^*(f) = 0$ is $0$ in order to show the faithfulness of $\bL \Phi^*$. We denote by $C \in \Chba_{\cC}(\rC(X,R))$ the (non-derived) Tate--\Cech complex of $\rC(X,R)$ associated to $S$ truncated at degree $0$, and by $\rho \colon \rC(X,R) \to C[-1]$ the canonical morphism in $\Chba_{\cC}(\rC(X,R))$. By the condition (i) and Theorem \ref{relation of cover by subsets}, $\rho$ is a quasi-isomorphism. By Lemma \ref{flatness of C(K,R)}, $C$ is termwise strongly flat. By $\bL \Phi^*(f) = 0$, we have $f \wh{\otimes}_{\rC(X,R)}^{\bL} C[-1] = 0$. By the commutativity of the diagram
\begin{eqnarray*}
\xymatrix{
M \ar[rr]^-{M \wh{\otimes}_{\rC(X,R)}^{\bL} \rho} \ar[d]^{f} & & M \wh{\otimes}_{\rC(X,R)}^{\bL} C[-1] \ar[d]^{f \wh{\otimes}_{\rC(X,R)}^{\bL} C[-1]} \\
N \ar[rr]^-{N \wh{\otimes}_{\rC(X,R)}^{\bL} \rho} & & N \wh{\otimes}_{\rC(X,R)}^{\bL} C[-1]
}
\end{eqnarray*}
in $\Derba_{\cC}(\rC(X,R))$, we have $(N \wh{\otimes}_{\rC(X,R)}^{\bL} \rho) \circ f = (f \wh{\otimes}_{\rC(X,R)}^{\bL} C[-1]) \circ (M \wh{\otimes}_{\rC(X,R)}^{\bL} \rho) = 0$. By Corollary \ref{Theorem B} (ii), $N \wh{\otimes}_{\rC(X,R)} \rho$ is a morphism in $\Chba(\Chba_{\cC}(\rC(X,R)))$ which is a termwise quasi-isomorphism. Therefore by Proposition \ref{derived symmetric monoidal structure} (iv), $N \wh{\otimes}_{\rC(X,R)}^{\bL} \rho$ is represented by a quasi-isomorphism. This implies that $N \wh{\otimes}_{\rC(X,R)}^{\bL} \rho$ is an isomorphism in $\Derba_{\cC}(\rC(X,R))$ and hence $f = (N \wh{\otimes}_{\rC(X,R)}^{\bL} \rho)^{-1} \circ (N \wh{\otimes}_{\rC(X,R)}^{\bL} \rho) \circ f = (N \wh{\otimes}_{\rC(X,R)}^{\bL} \rho)^{-1} \circ 0 = 0$.

\vspace{0.1in}
Finally, we show that the condition (i) implies the condition (vii). We note that we have already shown that the condition (i) is equivalent to the conditions (ii), (iii), (iv), (v), and (vi), and hence we can freely use them. The diagram
\begin{eqnarray*}
\xymatrix{
\Mod_{\cC}(\rC(X,R)) \ar[rr]^-{Q_{\cTnd}} \ar[d]^{J_X} & & \Desc_{\Mod_{\cC}(\rC(X,R))}(\Phi) \ar[d] \\
\Derba_{\cC}(\rC(X,R)) \ar[rr]^-{Q_{\cTd}} & & \Desc_{\Derba_{\cC}(\rC(X,R))}(\Phi)
}
\end{eqnarray*}
of functors commutes up to natural isomorphism, and the arrows other than $Q_{\cTnd}$ are fully faithful by the condition (vi). This implies that $Q_{\cTnd}$ is fully faithful.

\vspace{0.1in}
Therefore if $Q_{\cTnd}$ is essentially surjective, then the functor
\begin{eqnarray*}
\Sol_{\cTnd} \colon \Desc_{\Mod_{\cC}(\rC(X,R))}(\Phi) \to \Mod_{\cC}(\rC(X,R))
\end{eqnarray*}
assigning $\ker(c_0(M) - c_1(M))$ to each $M \in \Desc_{\Mod_{\cC}(\rC(X,R))}(\Phi)$ is a quasi-inverse of $Q_{\cTnd}$ by the condition (i) and Corollary \ref{Theorem B}, where $c_0(M)$ denotes the morphism
\begin{eqnarray*}
\prod_{(K,j) \in S} M_{(K,j)} & \to & \prod_{(K_i,j_i)_{i=0}^{1} \in S^2} M_{(K_0,j_0)} \wh{\otimes}_{\rC(X,R)} \rC(K_1,R) \\
(m_{(K,j)})_{(K,j) \in S} & \mapsto & (m_{(K_0,j_0)} \otimes 1)_{(K_i,j_i)_{i=0}^{1} \in S^2}
\end{eqnarray*}
in $\Mod_{\cC}(\rC(X,R))$, $c_1(M)$ denotes the morphism
\begin{eqnarray*}
\prod_{(K,j) \in S} M_{(K,j)} & \to & \prod_{(K_i,j_i)_{i=0}^{1} \in S^2} M_{(K_0,j_0)} \wh{\otimes}_{\rC(X,R)} \rC(K_1,R) \\
(m_{(K,j)})_{(K,j) \in S} & \mapsto & (\theta(M)_{(K_i,j_i)_{i=0}^{1}}(m_{(K_1,j_1)} \otimes 1))_{(K_i,j_i)_{i=0}^{1} \in S^2}
\end{eqnarray*}
in $\Mod_{\cC}(\rC(X,R))$, and $(M_{(K,j)})_{(K,j) \in S} \in \prod_{(K,j) \in S} \Mod_{\cC}(\rC(K,R))$ denotes the descent data associated to $M$ with compatible system $\theta(M) = (\theta(M)_{(K_i,j_i)_{i=0}^{1}})_{(K_i,j_i)_{i=0}^{1} \in S^2}$ of isomorphisms
\begin{eqnarray*}
\theta(M)_{(K_i,j_i)_{i=0}^{1}} \colon M_{(K_0,j_0)} \wh{\otimes}_{\rC(X,R)} \rC(K_1,R) \to M_{(K_1,j_1)} \wh{\otimes}_{\rC(X,R)} \rC(K_0,R)
\end{eqnarray*}
in $\Mod_{\cC}(\rC(X,R))$.

\vspace{0.1in}
We denote by $\LH_{\cC}(\rC(X,R))$ the full subcategory of $\Der_{\cC}(\rC(X,R))$ consisting of chain complexes concentrated at degrees in $\ens{-1,0}$ whose $(-1)$-st differential is a monomorphism. The essential image of $\LH_{\cC}(\rC(X,R))$ in $\Der_{\cC}(\rC(X,R))$ coincides with its left heart with respect to the left t-structure, and the correspondence assigning to each $N = (N_n,d_{N_n})_{n \in \Z} \in \Der_{\cC}(\rC(X,R))$ the object
\begin{eqnarray*}
\cdots \to 0 \to \coim(d_{N_{-1}}) \stackrel{D_N}{\to} \ker(d_{N_0}) \to 0 \to \cdots
\end{eqnarray*}
of $\LH_{\cC}(\rC(X,R))$ gives the cohomology functor $\LH^0 \colon \Der_{\cC}(\rC(X,R)) \to \LH_{\cC}(\rC(X,R))$ of the left t-structure by \cite{Sch99} Proposition 1.2.19, where $D_N$ denotes the canonical monomorphism $\coim(d_{N_{-1}}) \to \ker(d_{N_0})$ in $\Mod_{\cC}(\rC(X,R))$.

\vspace{0.1in}
By the condition (vi), there is a pair $(\tl{M},\psi)$ of an $\tl{M} = (\tl{M}_n,d_n)_{n \in \Z} \in \Derba_{\cC}(\rC(X,R))$ and an isomorphism $\psi_{\tl{M}} \colon \bL \Phi^*(\tl{M}) \to \prod_{(K,j) \in S} M_{(K,j)}$ in $\Derba_{\cC}(\rC(Y,R))$ compatible with the isomorphisms $\theta(M)$. We denote by $\Phi^*(\LH^0(\tl{M}))$ the chain complex
\begin{eqnarray*}
\cdots \to 0 \to \Phi^*(\coim(d_{-1})) \stackrel{\Phi^*(D_{\tl{M}})}{\longrightarrow} \Phi^*(\ker(d_0)) \to 0 \to \cdots
\end{eqnarray*}
of $\Mod_{\cC}(\rC(X,R))$, and by $\eta = (\eta_n)_{n \in \Z}$ the unit $\LH^0(\tl{M}) \to \Phi^*(\LH^0(\tl{M}))$ of $\cTnd$. We show that $\eta$ represents a monomorphism in $\LH_{\cC}(\rC(X,R))$. Applying $\LH^0$ to the image in $\Der_{\cC}(\rC(X,R))$ of the distinguished triangle
\begin{eqnarray*}
\Cone(\eta)[-1] \to \LH^0(\tl{M}) \to \Phi^*(\LH^0(\tl{M})) \to \Cone(\eta)
\end{eqnarray*}
in $\Chba_{\cC}(\rC(X,R))$, we obtain an exact sequence
\begin{eqnarray*}
\LH^0(\Cone(\eta)[-1]) \to \LH^0(\tl{M}) \stackrel{\eta}{\to} \Phi^*(\LH^0(\tl{M})) \to \LH^0(\Cone(\eta))
\end{eqnarray*}
in $\LH_{\cC}(\rC(X,R))$. Therefore it suffices to show that $\LH^0(\Cone(\eta)[-1])$ is strictly exact.

\vspace{0.1in}
Put $N \coloneqq \set{m \in \Phi^*(\coim(d_{-1}))}{\Phi^*(D_{\tl{M}})(m) \in \im(\eta_0)} \in \Mod_{\cC}(\rC(X,R))$. By the explicit presentation
\begin{eqnarray*}
\cdots \to 0 \to \coim(d_{-1}) & \stackrel{\binom{- D_{\tl{M}}}{\eta_{-1}}}{\longrightarrow} & \ker(d_0) \oplus \Phi^*(\coim(d_{-1})) \\
&  \stackrel{(\eta_0,\Phi^*(D_{\tl{M}}))}{\longrightarrow} & \Phi^*(\ker(d_0)) \to 0 \to \cdots
\end{eqnarray*}
of $\Cone(\eta)$, $\LH^0(\Cone(\eta)[-1])$ is given as
\begin{eqnarray*}
\cdots \to 0 \to \coim \left( \begin{array}{c} - D_{\tl{M}} \\ \eta_{-1} \end{array} \right) \stackrel{D_{\Cone(\eta)[-1]}}{\longrightarrow} \ker(\eta_0 \oplus \Phi^*(D_{\tl{M}})) \to 0 \to \cdots
\end{eqnarray*}
naturally identified with
\begin{eqnarray*}
\cdots \to 0 \to \coim(d_{-1}) \stackrel{\eta_{-1}}{\to} N \to 0 \to \cdots
\end{eqnarray*}
because $\eta_0$ is a strict monomorphism by the condition (i) and Corollary \ref{Theorem B}. By the commutativity of the diagram
\begin{eqnarray*}
\xymatrix{
\coim(d_{-1}) \ar[rrr]^-{\coim(d_{-1}) \wh{\otimes}_{\rC(X,R)} \rho} \ar[d]^{D_{\tl{M}}} & & & \coim(d_{-1}) \wh{\otimes}_{\rC(X,R)} C[-1] \ar[d]^{D_{\tl{M}} \wh{\otimes}_{\rC(X,R)} C[-1]} \\
\ker(d_0) \ar[rrr]^-{\ker(d_0) \wh{\otimes}_{\rC(X,R)} \rho} & & & \ker(d_0) \wh{\otimes}_{\rC(X,R)} C[-1]
}
\end{eqnarray*}
in $\Chba_{\cC}(\rC(X,R))$ whose horizontal arrows are quasi-isomorphisms again by the condition (i) and Corollary \ref{Theorem B}, we obtain $N = \Sol_{\cTnd}(Q_{\cTnd}(\coim(d_{-1})))$ and $\LH^0(\Cone(\eta)[-1])$ is strictly exact. This implies that $\eta$ represents a monomorphism in $\LH_{\cC}(\rC(X,R))$.

\vspace{0.1in}
Since $\rC(K,R)$ is a strongly flat object of $\Mod_{\cC}(\rC(X,R))$ and $M_{(K,j)}$ is concentrated at degree $0$ for any $(K,j) \in S$, $\psi_{\tl{M}}$ induces an isomorphism
\begin{eqnarray*}
\psi_{\LH^0(\tl{M})} \colon \Phi^*(\LH^0(\tl{M})) \to \prod_{(K,j) \in S} M_{(K,j)}
\end{eqnarray*}
in $\LH_{\cC}(\rC(X,R))$ compatible with $\theta(M)$. Since $\psi' \circ \eta \colon \LH^0(\tl{M}) \to \prod_{(K,j) \in S} M_{(K,j)}$ is a monomorphism in $\LH_{\cC}(\rC(X,R))$ and $\prod_{(K,j) \in S} M_{(K,j)}$ is concentrated at degree $0$, $\LH^0(\tl{M})$ belongs to the essential image of $\Mod_{\cC}(\rC(X,R))$ by \cite{Sch99} Proposition 1.2.29. This implies that $Q_{\cTnd}$ is essentially surjective.
\end{proof}

\vspace{0.3in}
\noindent {\Large \bf Acknowledgements}
\vspace{0.2in}

\noindent
The first listed author thanks Kobi Kremnizer for inspiring discussions on derived analytic geometry. During the preparation of this paper the first listed author has been supported by the DFG research grant BA 6560/1-1 ``\emph{Derived geometry and arithmetic}''.

\vspace{0.1in}
The second listed author thanks Fr\'ed\'eric Paugam for informing me of the interesting result in \cite{BK20} by the other author F.\ Bambozzi. We have started this joint work thanks to this information. The second listed author also thanks Takuma Hayashi for instructing him on elementary facts on derived categories, and colleagues in universities for daily discussions. The second listed author is also thankful to his family for their deep affection.


\end{document}